\numberwithin{equation}{section}
\newtheorem{theorem}{Theorem}
\newtheorem{lemma}{Lemma}
\newtheorem{proposition}{Proposition}
\newtheorem{corollary}{Corollary}
\theoremstyle{definition}
\newtheorem{definition}[theorem]{Definition}
\newtheorem{example}[theorem]{Example}
\theoremstyle{remark}
\newtheorem{remark}{Remark}
\newcommand{\norm}[1]{\left\lVert #1 \right\rVert}
\newcommand{\abs}[1]{\left\lvert #1 \right\rvert}    
\DeclareMathOperator{\R}{{\mathbb R}}                
\DeclareMathOperator{\Rp}{{\mathbb R}_+}             
\DeclareMathOperator{\C}{{\mathbb C}}                
\DeclareMathOperator{\I}{I}                          %
\renewcommand{\P}{P}
\newcommand{\e}{\mathrm{e}}                                                  
\begin{document}

\title[Asymptotics of Affine Stochastic Volterra Equations]
{
Exact Pathwise and Mean--Square Asymptotic Behaviour of Stochastic
Affine Volterra and Functional Differential Equations}

\author{John~A.~D.~Appleby}
\address{Edgeworth Centre for Financial Mathematics, School of Mathematical
Sciences, Dublin City University, Glasnevin, Dublin 9, Ireland}
\email{john.appleby@dcu.ie} \urladdr{webpages.dcu.ie/\textasciitilde
applebyj}

\author{John~A.~Daniels}
\address{Edgeworth Centre for Financial Mathematics, School of Mathematical
Sciences, Dublin City University, Glasnevin, Dublin 9, Ireland}
\email{john.daniels2@mail.dcu.ie}


\thanks{Both authors gratefully acknowledge Science Foundation Ireland for the support of this research
under the Mathematics Initiative 2007 grant 07/MI/008 ``Edgeworth
Centre for Financial Mathematics''. The second author is also
supported by the Irish Research Council for Science, Engineering and
Technology under the Embark Initiative grant.}
\subjclass[2010]{Primary: 47G10, 45A05,
45D05, 34K06, 34K25, 34K50,
60H10, 60H20, 45F99, 45P05, 45J05, 45M05, 60G15;
Secondary: 91B70, 91G80} \keywords{Stochastic linear operator,
admissibility, Volterra operator, almost sure convergence, mean
square convergence, stochastic functional differential equations,
stochastic Volterra equations, differential resolvent,
integro--differential equations, linear Volterra equation,
characteristic equation, characteristic exponent}
\date{24 October 2012}

\begin{abstract}
The almost sure rate of exponential-polynomial growth or decay of
affine stochastic Volterra and affine stochastic finite-delay
equations is investigated. These results are achieved under suitable
smallness conditions on the intensities of the deterministic and
stochastic perturbations diffusion, given that the asymptotic
behaviour of the underlying deterministic resolvent is determined by
the zeros of its characteristic equation. The results rely heavily
upon a stochastic variant of the admissibility theory for linear
Volterra operators.
\end{abstract}

\maketitle

\section{Introduction}
Interest in stochastic functional differential equations, including
stochastic differential equations with delay, and stochastic
Volterra equations, has increased in recent years, in part because
of their attraction for modelling real--world systems in which the
change in the state of a system is both random and depends on the
path of the process in the past. Examples include population biology
(Mao~\cite{XM:2005}, Mao and Rassias~\cite{MR:2005, MR:2007}),
neural networks (cf. e.g. Blythe et al.~\cite{BlyXMAS:2001}),
viscoelastic materials subjected to heat or mechanical stress
Drozdov and Kolmanovskii~\cite{DrozKol:1992a}, Caraballo et
al.~\cite{CarChuesRubReal:2007a}, Mizel and
Trutzer~\cite{MizTrut:84,MizTrut:85}), or financial mathematics {Anh
et al.~\cite{anh,aik}, Appleby et al.~\cite{jamrcs10}, Appleby and
Daniels~\cite{App_Dan}, Arrojas et al.~\cite{ArrHuMoh:2007}, Hobson
and Rogers~\cite{HobRog}, and Bouchaud and
Cont~\cite{ContBouchaud:1998a}.

Naturally, in all these disciplines, there is a great interest in
understanding the long--run behaviour of solutions. In disciplines
such as engineering and physics it is often of great importance to
know that the system is \emph{stable}, in the sense that the
solution of the mathematical model converges in some sense to
equilibrium. Consequently, a great deal of mathematical activity has
been devoted to the question of stability of point equilibria of
stochastic functional differential equations and also to the rate at
which solutions converge. The literature is extensive, but a flavour
of the work can be found in the monographs of Mao~\cite{M94,Mao2},
Mohammed~\cite{Moh:84}, and Kolmanovskii and
Myskhis~\cite{KolMys:99}. Results are known concerning the
asymptotic behaviour of affine stochastic Volterra equations,
including rates of convergence (see~\cite{apprie,apprie2}), but
generally upper bounds on the solutions are found, rather than exact
rates of decay. In this paper, we investigate not only the exact
rate of convergence of solutions to point equilibria, but also the
exact rate of growth of solutions of affine equations, which are of
interest in studying the explosive growth or collapse of asset
prices in financial market models. This develops results established
in~\cite{jamrcs10}.

To determine the precise asymptotic results we require, it proves
efficient and instructive to ask first a more general question
concerning the asymptotic behaviour of stochastic integrals of the
form
\begin{equation}\label{eq:HfB}
(\mathcal{H}f)(t):=\int_0^t H(t,s)f(s)\,dB(s)
\end{equation}
where $H$ is a deterministic Volterra kernel and $f$ is a
deterministic function on $[0,\infty)$.

There is a deterministic theory of admissible operators which
enables one to give precise asymptotic information regarding the
solutions of integral and differential equations. As part of the
analysis of such theory one encounters deterministic counterparts of
\eqref{eq:HfB}. It is then unsurprising to see \eqref{eq:HfB} in the
study of affine stochastic differential equations. The admissibility
theory is often useful when any forcing terms are of the same or
smaller order to the solution of the unperturbed equation.

This theory is examined in depth in Appleby et al.~\cite{AppDanRey},
the chief results are summarised in Section~\ref{sect:stochlim}. It
is supposed in \cite{AppDanRey} that there exists a
$H_\infty:\R\to\R$ such that \eqref{eq:HfB} converges almost surely
according to
\[
\lim_{t\to\infty}\int_0^t H(t,s)f(s)\,dB(s) =
\int_{0}^{\infty}H_{\infty}(s)f(s)dB(s).
\]
This paper largely employs the admissibility theory of
\cite{AppDanRey}. However as \cite{AppDanRey} does not provide a
method of constructing such a $H_\infty$, we remark when
hypothesising the precise form of $H_\infty$ it often proves useful
as to examine $\lim_{t\to\infty} H(t,s)$. If there is any growing or
indeed oscillating component in $t\mapsto H(t,s)$ one may use this
to deduce the correct form of $H_\infty$.

Once we have developed some general results concerning the
asymptotic behaviour of $\mathcal{H}f$, the majority of the paper is
devoted to applying this theory to describe the fine structure of
the asymptotic behaviour of affine stochastic functional
differential equations of the form
\[
dX(t)=L(t,X_t)\,dt + \Sigma(t)\,dB(t)
\]
where $L=L(\phi)$ is a linear functional from $C([-\tau,0])$ to
$\mathbb{R}^d$, or $L(t,\phi_t)$ is a linear convolution Volterra
functional from $C([0,\infty))$ to $\mathbb{R}^d$. Therefore, we are
chiefly interested in the effect of time--dependent stochastic
perturbations on the asymptotic behaviour of autonomous (or
asymptotically autonomous) linear functional differential equations.
It is assumed that the asymptotic behaviour of solutions of the
underlying fundamental solution of differential resolvent can be
described in terms of the solutions of the characteristic equation,
and that such solutions lie in the region of existence of the
transform of the resolvent.

Results of Mohammed and Scheutzow~\cite{smms:1990} show that with
respect to white noise perturbations, the Liapunov spectrum of
deterministic functional differential equations is preserved, to the
extent that the leading positive Liapunov exponent of the
deterministic equation becomes the a.s. leading Liapunov exponent of
the stochastic equation. However, it is also of interest to ask
whether oscillation, or multiplicity of the characteristic equations
are preserved when the noise intensity is sufficiently small (or
does not grow too rapidly, or decay to slowly, relative to the
exponential rate of growth or decay of the resolvent). It is known
from~\cite{jamrcs10} in the case of a particular scalar functional
differential equation with finite delay, for which the solution of
the characteristic equation with largest real part is real and
simple, and for which the noise intensity is constant, that the
solution of the stochastic equation inherits exactly the rate of
growth of the resolvent. It is natural to ask whether a result of
this kind can be generalised to deal with finite dimensional
equations, of both finite delay and Volterra type, for which there
may be many solutions of the characteristic equation which have the
same real part, need not be simple, nor even be real solutions.

It is a longstanding theme in the asymptotic theory of differential
equations, and especially of linear equations, to ask the question:
how large can a forcing or perturbation term be, so that the
perturbed differential system preserves the asymptotic behaviour of
the underlying unperturbed equation. Investigations of this type
were systematically initiated by Hartman and Wintner in the
1950's~\cite{hartwint:1953,hartwint:1954,hartwint:1954b,hartwint:1955}.
More recently, there have been many interesting contributions
concerning the asymptotic behaviour of functional differential
equations: the literature is quite large, but some important and
representative papers include Cruz and Hale~\cite{cruzhale:1971},
Haddock and Sacker~\cite{hadsack:1980}, Arino and Gy\H{o}ri
\cite{arinogyori:1989}, Castillo and
Pinto~\cite{castillopinto:2002}, Gy\H{o}ri and
Pituk~\cite{gyoripituk:1995}, Pituk~\cite{pituk:1999, pituk:2006},
and Gy\H{o}ri and Hartung~\cite{gyorihartung:2010} among many
others. Already, some results for stochastic Volterra equations with
state--independent perturbations suggest that results of this type
may also be available in the random case
Appleby~\cite{app:2004gyori}.

It is one of the goals of this paper to demonstrate that very sharp
conditions can be identified on the intensity of the perturbations
under which the asymptotic behaviour of the deterministic equations
is preserved. Moreover, we show that the results apply to a wide
class of affine stochastic functional differential equation, and
examples and underlying admissibility results show that there is the
potential for our work to apply to a wider class yet.

Our results for the solution $X$ of functional differential
equations have the form
\begin{equation} \label{eq.intro3}
\lim_{t\to\infty} \left\{\frac{X(t,\omega)}{\gamma(t)} -
S(t,\omega)\right\}=0, \quad \text{a.s. and in mean square}
\end{equation}
where $\gamma:(0,\infty)\to (0,\infty)$ is a deterministic real
exponential polynomial, and $S$ is a random sinusoidal vector, whose
``frequencies'' are deterministic but whose ``amplitudes'' or
``multipliers'' are multidimensional normal random variables which
are path--dependent (in the case where the zeros of the
characteristic equation with largest real part are real, $S$ is a
constant random vector). These ``multipliers'' turn out to be
identifiable linear functionals of the Brownian motion, the noise
intensity $\Sigma$, and of the initial function or condition,
because we have an explicit formula for these multipliers in terms
of the solutions of the characteristic equation with largest real
part. Similar multipliers emerge in papers of Appleby, Devin and
Reynolds on stochastic Volterra equations whose solutions have
Gaussian limits~ \cite{jasddr:2006, jasddr:2007}. Moreover, the
joint distribution of these random limits is known exactly, because
the mean and covariance matrix of the Gaussian limit can be computed
explicitly in terms of the components of the random vector. This has
already proved of interest in~\cite{jamrcs10} where the form of the
multiplier can be used to describe the mechanism by which financial
market bubbles can start. Our results here are also superior to
those in Appleby and Daniels~\cite{App_Dan} (i.e. Chapter 5) in
which a limit formula for asset returns of the form
\eqref{eq.intro3} is found for a nonautonomous stochastic functional
differential equation. The method of asymptotic analysis, which
applies the \emph{deterministic} admissibility theory pathwise,
shows that the distribution of $S$ is Gaussian, but does not enable
a formula for the variance to be determined. These examples from
finance demonstrate the utility of an authentically stochastic
admissibility theory in finding the exact form of the limiting
multiplier.

\section{Mathematical Preliminaries}
\subsection{Notation and terminology}
Let $\mathbb{Z}$ be the set of integers,
$\mathbb{Z}^{+}=\{n\in\mathbb{Z}:n\geq0\}$ and $\R$ the set of real
numbers. We denote by $\Rp$ the half-line $[0,\infty)$. The complex
plane is denoted by $\C$ and $\C_0:=\{z\in\C:\, \Re (z)\ge 0\}$,
where $\Re(z)$ and $\Im(z)$ denote the real and imaginary parts of
any complex number $z$.
 If $d$ is a positive integer, $\R^d$ is the space of
$d$-dimensional column vectors with real components and
$\R^{d_1\times d_2}$ is the space of all $d_1 \times d_2$ real
matrices. The identity matrix on $\R^{d\times d}$ is denoted by
$\I_d$, while $0_{d_1,d_2}$ represents the matrix of zeros in
$\mathbb{R}^{d_1\times d_2}$. Let $A\in\R^{d\times d}$ then det($A$)
denotes the determinant of the square matrix $A$. $A^T$ denotes the
transpose of any $A\in\R^{d_1\times d_2}$. The absolute value of $A
=A_{i,j}$ in $\R^{d_1\times d_2}$ is the matrix given by
$(|A|)_{i,j}=|A_{i,j}|$.

We employ the standard Landau notation: if
$f:\mathbb{C}\to\mathbb{C}$ and $g:\mathbb{C}\to\mathbb{R}$, we
write $f=O(g)$ as $|z|\to\infty$ if there exist $z_0>0$ and $M>0$
such that $|f(z)|\leq M|g(z)|$ for all $|z|>z_0$, for a matrix
valued function the Landau notation is applied element-wise. For any
two functions $U:\Rp\to\R^{d_1\times d_2}$ and
$V:\Rp\to\R^{d_2\times d_3}$. we define the \emph{convolution} of
$\{(U\ast V)(t)\}_{t\geq0}$ by
\[
    (U\ast V)(t)=\int_{0}^t U(t-s)V(s)\,ds, \quad t\geq 0.
\]
In this paper the \emph{Laplace transform} of a sequence $U$ in
$\R^{d_1\times d_2}$ is the function defined by
\[
    \tilde{U}( \lambda)= \int_{0}^\infty \e^{-\lambda s}U(s)\,ds,
\]
provided $\lambda$ is a complex number for which the integral
converges absolutely. A similar definition pertains for the Laplace
transform of a measure, \cite[Definitions~2.1,~2.2]{GrLoSt90} and
for functions with values in other spaces.

Let $BC(\Rp;\R^{d_1\times d_2})$ denote the space matrices whose
elements are  bounded continuous functions. The abbreviation
\text{\it a.e.} stands for \text{\it almost everywhere}. The space
of continuous and continuously differentiable functions on $\Rp$
with values in $\R^{d_1\times d_2}$ is denoted by
$C(\Rp;\R^{d_1\times d_2})$ and $C^1(\Rp;\R^{d_1\times d_2})$
respectively. While $C^{1,0}(\Delta;\R^{d_1\times d_2})$ represents
the space of functions which are continuously differentiable in
their first argument and continuous in their second argument, over
some two--dimensional space $\Delta$. For any scalar function
$\varphi$, the space of weighted $p^{th}$integrable functions is
denoted by
\[
    L^p(\Rp;\R^{d_1\times d_2};\varphi) :=
    \{ f:\Rp\to\R^{d_1\times d_2}:\int_{0}^{\infty}\varphi(s)|f(s)_{i,j}|^p \,ds <+\infty, \text{ for all } i,j \},
\]
when $\varphi=1$, we do not include it in our notation, i.e.
$L^p(\Rp;\R^{d_1\times d_2};1)=L^p(\Rp;\R^{d_1\times d_2})$.

For any vector $x\in\R^d$ the norm $\norm{\cdot}$ denotes the
Euclidean norm,
     $\norm{x}^2=\sum_{j=1}^{d}x_j^2$
and the infinity norm, $|\cdot|_{\infty}$, is defined by
$|x|_{\infty} = \max_{i=1,...,d}\left(|x_1|,...,|x_d| \right)$.

While for a matrix norm we use the Frobenius norm, for any
$A=(a_{i,k})\in\R^{n\times d}$
\[
    \norm{A}_{F}^2 = \sum_{i=1}^{n}\sum_{k=1}^{d}|a_{i,k}|^2.
\]
As both $\R^d$ and $\R^{d\times d}$ are finite dimensional Banach
spaces all norms are equivalent in the sense that for any other
norm,  $\norm{\cdot}$, one can find universal constants
$d_1(n,d)\leq d_2(n,d)$ such that
\[
    d_1\norm{A}_F \leq \norm{A} \leq d_2\norm{A}_F.
\]
Thus there is no loss of generality in using the Euclidean and
Frobenius norms, which for ease of calculation, are used throughout
the proofs of this paper. Moreover we remark that the Frobenius norm
is a {\it consistent matrix norm}, i.e. for any $A\in\R^{n_1\times
n_2}, B\in\R^{n_2\times n_3}$
\[
    \norm{AB}_F \leq \norm{A}_F \norm{B}_F.
\]

For any matrix $C\in\R^{n\times d}$ we say $C\geq0$ if
$(C)_{i,j}\geq0$ for all $i,j$. Also, we say for any matrices
$A,B\in\R^{n\times d}$ that $A\leq B$ if $B-A\geq0$. We will use the
fact that $\norm{A}\leq\norm{B}$ whenever $0\leq A\leq B$.

We also will require some notation and results regarding finite
measures on sub--intervals of the real line. Let $M(J,\R^{d\times
d^\prime})$ be the space of finite Borel measures on $J$ with values
in $\R^{d\times d^\prime}$, where $J$ shall be either $\Rp$ or
$[-\tau,0]$. The total variation of a measure $\nu$ in
$M(J,\R^{d\times d^\prime})$ on a Borel set $B\subseteq J$ is
defined by
\begin{align*}
 \abs{\nu}\!(B):=\sup\sum_{i=1}^N \abs{\nu(E_i)},
\end{align*}
where $(E_i)_{i=1}^N$ is a partition of $B$ and the supremum is
taken over all partitions. The total variation defines a positive
scalar measure $\abs{\nu}$ in $M(J,\R)$. If one specifies
temporarily the norm $\abs{\cdot}$ as the $l^1$-norm on the space of
real-valued sequences and identifies $\R^{d\times d^\prime}$ by
$\R^{dd^\prime}$ one can easily establish for the measure
$\nu=(\nu_{i,j})_{i,j=1}^{d,d'}$ the inequality
\begin{align}\label{eq.totalvarest}
 \abs{\nu}\!(B)\le  C \sum_{i=1}^d\sum_{j=1}^{d'} \abs{\nu_{i,j}}\!(B)
 \qquad\text{for every Borel set }B\subseteq \Rp
\end{align}
with $C=1$. Then, by the equivalence of every norm on
finite-dimensional spaces, the inequality \eqref{eq.totalvarest}
holds true for the arbitrary norms $\abs{\cdot}$ and some constant
$C>0$. Moreover, as in the scalar case we have the fundamental
estimate
\begin{align*}
 \abs{\int_{J} \nu(ds)\, f(s)} \le \int_{J}\abs{\nu}\!(ds) \,\abs{f(s)}
\end{align*}
for every function $f:J\to\R^{d^\prime \times d^{\prime\prime}}$
which is $\abs{\nu}$-integrable.

\begin{definition}
A positive function $\varphi$ defined on $\R$ is called
submultiplicative, if $\varphi(0)=1$, and
\[
    \varphi(s+t)\leq\varphi(s)\varphi(t),
\]
for all $s,t\in\R.$
\end{definition}
We also define the limits
\[
    \alpha_{\varphi}:= -\lim_{t\to-\infty}\frac{\ln(\varphi(t))}{t},
    \quad \omega_{\varphi}:= -\lim_{t\to\infty}\frac{\ln(\varphi(t))}{t}.
\]
Which always exist when $\varphi$ is a submultiplicative function,
c.f. \cite[Lemma~4.1]{GrLoSt90}.

We define the following modes of convergence:
\begin{definition}
The $\mathbb{R}^{n}$-valued stochastic process $\{X(t)\}_{t\geq0}$
converges in mean-square to $X_{\infty}$ if
\[
    \lim_{t\to\infty}\mathbb{E}[\norm{X(t)-X_{\infty}}^2] = 0.
\]
\end{definition}
\begin{definition}
If there exists a $\mathbb{P}$--null set $\Omega_0$ such that for
every $\omega\not\in\Omega_0$ the following holds
\[
    \lim_{t\to\infty}X(t,\omega) = X_{\infty}(\omega),
\]
then we say $X$ converges almost surely (a.s.) to $X_{\infty}$.
\end{definition}

\subsection{Admissibility theory for linear stochastic Volterra operators}\label{sect:stochlim}
The main results in this paper are established using convergence
results proven in \cite{AppDanRey} for linear stochastic Volterra
operators. Since these results are used extensively throughout, they
are stated here for the convenience of the reader. A important
corollary of these results, which is of especial use in our
asymptotic analysis of affine stochastic equations, is given in the
next section.

We consider the following hypotheses: let $\Delta\subset
\mathbb{R}^2$ be defined by
\[
\Delta=\{(t,s): 0\leq s\leq t<+\infty\}
\]
and
\begin{equation}\label{eq.Hcns2}
    H:\Delta\to \R^{n\times n} \text{ is continuous.}
\end{equation}
We first characterise, for $f\in C([0,\infty);\mathbb{R}^{n\times
d})$ with bounded norm, the convergence of the stochastic process
$X_f=\{X_f(t):t\geq 0\}$ defined by
\[
X_f(t)=\int_0^t H(t,s)f(s)\,dB(s), \quad t\geq 0
\]
to a limit as $t\to\infty$ in \textit{mean--square}, where
$B(t)=\{B_1(t),B_2(t),...,B_d(t)\}$ is a vector of mutually
independent standard Brownian motions. For the definition of a
stochastic integral in higher dimensions and the result
corresponding to It\^o's isometry we refer the reader to
\cite[Definition~1.5.20 and Theorem~1.5.21]{Mao2}.

Before discussing the convergence in mean square, we note that \eqref{eq.Hcns2} is
sufficient to guarantee that $X_f(t)$ is a well--defined random
variable for each fixed $t$. Therefore the family of random
variables $\{X_f(t):t\geq 0\}$ is well--defined, and $X_f$ is indeed
a process, and for each fixed $t$ the random variable $X_f(t)$ is
$\mathcal{F}^{B}(t)$-adapted. Condition \eqref{eq.Hcns2} also
guarantees that $\mathbb{E}[X_f(t)^2]<+\infty$ for each $t\geq 0$.
Since $f\mapsto X_f$ is linear, and the family $(X_f(t))_{t\geq 0}$
is Gaussian for each fixed $f$, the limit should also be Gaussian
and linear in $f$, as well as being an
$\mathcal{F}^B(\infty)$--measurable random variable. Therefore, a
reasonably general form of the limit should be
\[
X_f^\ast:=\int_0^\infty H_\infty(s) f(s)\,dB(s),
\]
where we would expect $H_\infty$ to be a function independent of
$f$.  Our first main result, which is proven in~\cite{{AppDanRey}},
characterises the conditions under which $X_f(t)\to X_f^\ast$ in
mean square as $t\to\infty$ for each $f$.
\begin{theorem} \label{thm.msqcharacterise2}
Suppose that $H$ obeys \eqref{eq.Hcns2}. Then the statements
\begin{itemize}
\item[(A)] There exists
$H_\infty\in C([0,\infty);\mathbb{R}^{n\times n})$ such that
$\int_{0}^{\infty}\norm{H_{\infty}(s)}^2 ds <+\infty$  and
\begin{equation}\label{eq.HtoHinfty2}
    \lim_{t\to\infty} \int_{0}^t \norm{H(t,s)-H_\infty(s)}^2\,ds=0.
\end{equation}
\item[(B)] There exists $H_\infty \in C([0,\infty);\mathbb{R}^{n\times n})$ such that
for each $f\in BC(\Rp;\mathbb{R}^{n\times d})$,
\begin{equation}\label{eq.meansquareconv2}
    \lim_{t\to\infty} \mathbb{E}\left[\norm{\int_0^t H(t,s)f(s)\,dB(s) - \int_0^\infty H_\infty(s)f(s)\,dB(s) }^2\right]=0
\end{equation}
\end{itemize}
are equivalent.
\end{theorem}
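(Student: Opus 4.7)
The strategy is to reduce both directions to a single It\^o--isometry identity. For any $f\in BC(\Rp;\R^{n\times d})$ and any $t\ge 0$, I would split the candidate limit at time $t$ and write
\[
X_f(t) - X_f^\ast = \int_0^t [H(t,s)-H_\infty(s)]f(s)\,dB(s) \; - \; \int_t^\infty H_\infty(s)f(s)\,dB(s).
\]
The two It\^o integrals on the right are driven by non-overlapping Brownian increments, so they are independent and each has mean zero. Pythagoras in $L^2(\Omega)$, followed by the matrix-valued It\^o isometry applied to each piece, then produces the fundamental identity
\[
\mathbb{E}\norm{X_f(t)-X_f^\ast}^2 \; = \; \int_0^t \norm{[H(t,s)-H_\infty(s)]f(s)}_F^2\,ds \; + \; \int_t^\infty \norm{H_\infty(s)f(s)}_F^2\,ds.
\]

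For (A)$\Rightarrow$(B): under (A), $H_\infty\in L^2(\Rp;\R^{n\times n})$. For bounded continuous $f$, the tail term is dominated by $\norm{f}_\infty^2\int_t^\infty\norm{H_\infty(s)}^2\,ds$, which tends to $0$ by integrability, while the first piece is dominated by $\norm{f}_\infty^2\int_0^t\norm{H(t,s)-H_\infty(s)}^2\,ds$, which tends to $0$ by the assumption \eqref{eq.HtoHinfty2}. Summing yields (B) with the same $H_\infty$.

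For (B)$\Rightarrow$(A): well--definedness of the putative limit $X_f^\ast=\int_0^\infty H_\infty(s)f(s)\,dB(s)$ as a square--integrable It\^o integral already requires $\int_0^\infty\norm{H_\infty(s)f(s)}_F^2\,ds<\infty$ for every bounded $f$. Testing with the constant rank--one matrices $f\equiv e_j e_k^T$, where $e_j\in\R^n$ and $e_k\in\R^d$ are the standard unit vectors, reduces this to $\int_0^\infty\abs{(H_\infty(s))_{i,j}}^2\,ds<\infty$ for every $(i,j)$, hence $H_\infty\in L^2(\Rp;\R^{n\times n})$. With that integrability in hand, the fundamental identity applies, and its left--hand side vanishes as $t\to\infty$ by (B) for every $f\in BC$. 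Since both terms on the right are non--negative, each must vanish. Again running $f\equiv e_j e_k^T$ and summing the resulting scalar statements over $j$ upgrades the convergence to $\int_0^t\norm{H(t,s)-H_\infty(s)}_F^2\,ds\to 0$, which by equivalence of norms on $\R^{n\times n}$ is precisely (A).

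The delicate step is the fundamental identity itself. The cross--term in the expansion of $\mathbb{E}\norm{Y_1+Y_2}^2$ vanishes only because the two It\^o pieces are driven by Brownian increments on disjoint intervals, and the matrix-valued It\^o isometry must be invoked in the Frobenius form appropriate to $\R^{n\times d}$-valued integrands against a $d$-dimensional Brownian motion. After the identity is established, the rest is routine: sup--norm bounds on bounded integrands for (A)$\Rightarrow$(B), and isolation of individual matrix entries by constant rank--one test functions for (B)$\Rightarrow$(A).
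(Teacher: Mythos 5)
Your argument is correct: the decomposition $X_f(t)-X_f^\ast=\int_0^t[H(t,s)-H_\infty(s)]f(s)\,dB(s)-\int_t^\infty H_\infty(s)f(s)\,dB(s)$, the vanishing cross--term from disjoint Brownian increments, the It\^o isometry, and the rank--one test functions $f\equiv e_je_k^T$ to recover entrywise square--integrability of $H_\infty$ and the full Frobenius--norm convergence all go through. Note that this paper does not actually contain a proof of Theorem~\ref{thm.msqcharacterise2} --- it is imported verbatim from the cited reference \cite{AppDanRey} --- so there is no in--paper argument to compare against; your isometry--plus--Pythagoras route is the natural one and is essentially what that reference does.
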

We now consider the almost sure convergence of $X_f(t)$ as
$t\to\infty$ to a limit. Our next main result states that if we have
convergence in an a.s. sense, we must also have convergence in a
mean square sense.
\begin{theorem} \label{thm.ascharacterise2}
Suppose that $H$ obeys \eqref{eq.Hcns2} and there exists $H_\infty
\in C([0,\infty);\mathbb{R}^{n\times n})$ such that for each $f\in
BC([0,\infty);\mathbb{R}^{n\times d})$,
\begin{equation}\label{eq.asconv2}
\lim_{t\to\infty} \int_0^t H(t,s)f(s)\,dB(s) = \int_0^\infty
H_\infty(s)f(s)\,dB(s), \quad \text{a.s.}
\end{equation}
Then \eqref{eq.HtoHinfty2} and \eqref{eq.meansquareconv2} hold.
\end{theorem}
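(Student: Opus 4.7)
The strategy is to upgrade the a.s.\ hypothesis \eqref{eq.asconv2} to the mean-square statement \eqref{eq.meansquareconv2}, after which Theorem \ref{thm.msqcharacterise2} will deliver \eqref{eq.HtoHinfty2} for free via the implication (B)$\Rightarrow$(A).

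Fix $f\in BC(\Rp;\R^{n\times d})$ and set
\[
Y(t):=\int_0^t H(t,s)f(s)\,dB(s)-\int_0^\infty H_\infty(s)f(s)\,dB(s),\quad t\geq 0.
\]
The hypothesis implicitly requires $H_\infty f\in L^2$ so that the Wiener integral on the right is defined. Each component $Y_i(t)$ is a linear combination of Wiener integrals against the mutually independent Brownian motions $B_1,\ldots,B_d$, and is therefore a centered real-valued Gaussian random variable. The a.s.\ hypothesis forces $Y(t)\to 0$ almost surely, hence componentwise in probability.

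The main ingredient is the elementary Gaussian fact: if $(Z_m)$ is a sequence of centered real-valued Gaussian random variables with $Z_m\to 0$ in probability, then $\sigma_m^2:=\Var(Z_m)\to 0$, and hence $Z_m\to 0$ in $L^2$. (Were $\sigma_{m_k}\geq\varepsilon>0$ along some subsequence, then $\P(|Z_{m_k}|>\varepsilon/2)\geq \P(|N(0,1)|>1/2)>0$, contradicting convergence in probability.) I would apply this fact along an arbitrary sequence $t_m\to\infty$ to each component $Y_i(t_m)$ and sum over $i=1,\ldots,n$ to obtain $\mathbb{E}[\norm{Y(t_m)}^2]\to 0$; as the sequence was arbitrary, $\mathbb{E}[\norm{Y(t)}^2]\to 0$ as $t\to\infty$. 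This is precisely statement (B) of Theorem \ref{thm.msqcharacterise2}, from which (A), namely \eqref{eq.HtoHinfty2}, follows.

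The substantive step is the Gaussian dichotomy above: this is where the structural feature of $Y(t)$ being a Gaussian vector is used, and without it almost sure and $L^2$ convergence are in general incomparable. I do not anticipate other obstacles, since Theorem \ref{thm.msqcharacterise2} carries the kernel-analytic burden and the reduction to it is clean.
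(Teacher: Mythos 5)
Your proof is correct: reducing the claim to the Gaussian dichotomy (a sequence of centred Gaussian variables converging in probability must have variances tending to zero, hence converges in $L^2$), applying it to the centred Gaussian vector $\int_0^t H(t,s)f(s)\,dB(s)-\int_0^\infty H_\infty(s)f(s)\,dB(s)$ along arbitrary sequences $t_m\to\infty$, and then invoking the implication (B)$\Rightarrow$(A) of Theorem~\ref{thm.msqcharacterise2} to recover \eqref{eq.HtoHinfty2} is exactly the intended route. The present paper only states Theorem~\ref{thm.ascharacterise2} and defers its proof to \cite{AppDanRey}, where this same Gaussian upgrade from almost sure to mean--square convergence is the essential step, so your argument matches the source in substance and fills in the details correctly.
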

\eqref{eq.HtoHinfty2} is a necessary condition for a.s. convergence.
It is of course natural to then ask whether \eqref{eq.HtoHinfty2} is
sufficient. By means of examples, it is shown in \cite{AppDanRey}
that in general additional conditions are needed in order for
\eqref{eq.asconv2} to hold. We now state our main result which
guarantees a.s. convergence of the stochastic integral.
\begin{theorem} \label{thm.assuff2}
Suppose that $H$ obeys \eqref{eq.Hcns2} and also that $H\in
C^{1,0}(\Delta;\mathbb{R}^{n\times n})$. Suppose also that there
exists $H_\infty\in C([0,\infty);\mathbb{R}^{n\times n})$ such that
$\int_{0}^{\infty}\norm{H_{\infty}(s)}^2 ds <+\infty$ and
\begin{equation} \label{eq.Htilderateto02}
\lim_{t\to\infty} \int_0^t \norm{H(t,s)-H_\infty(s)}^2\,ds \cdot
\log t=0,
\end{equation}
and
\begin{multline}
\label{eq.H1to02}
\text{There exists $q\geq 0$ and $c_q>0$ such that } \\
\int_0^t \norm{H_1(t,s)}^2\,ds \leq c_q(1+t)^{2q}, \quad
\norm{H(t,t)}^2\leq c_q(1+t)^{2q}.
\end{multline}
Then $H$ obeys \eqref{eq.asconv2}.
\end{theorem}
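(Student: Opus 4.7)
The plan is to write $Y(t) := (\mathcal{H}f)(t) - \int_0^\infty H_\infty(s) f(s)\,dB(s) = Z(t) - R(t)$ with
\[
Z(t) := \int_0^t [H(t,s) - H_\infty(s)] f(s)\,dB(s), \qquad R(t) := \int_t^\infty H_\infty(s) f(s)\,dB(s),
\]
so it suffices to show $R(t)\to 0$ and $Z(t)\to 0$ almost surely. The remainder $R(t)$ is immediate: since $\|f\|_\infty^2 \int_0^\infty \norm{H_\infty(s)}^2\,ds < \infty$, the martingale $M(t) := \int_0^t H_\infty(s) f(s)\,dB(s)$ is $L^2$-bounded, hence converges almost surely by the $L^2$-martingale convergence theorem, and $R(t) = M(\infty) - M(t) \to 0$ a.s.

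For $Z(t)\to 0$ a.s.\ I would follow the standard ``discretise-and-interpolate'' scheme. With $t_n := e^n$, It\^o's isometry gives that each component of $Z(t_n)$ is centred Gaussian with variance bounded by $\|f\|_\infty^2 \,\epsilon(t_n)$, where $\epsilon(t) := \int_0^t \norm{H(t,s)-H_\infty(s)}^2\,ds$. Hypothesis~\eqref{eq.Htilderateto02} forces $n\,\epsilon(t_n) \to 0$, and the Gaussian tail estimate
\[
\mathbb{P}\!\left(\norm{Z(t_n)} > \delta\right) \leq C \exp\!\left(-\frac{c\,\delta^2}{\epsilon(t_n)}\right)
\]
therefore decays faster than any polynomial in $n^{-1}$, so Borel--Cantelli yields $Z(t_n) \to 0$ a.s.

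The main obstacle is controlling $\sup_{t \in [t_n, t_{n+1}]} \norm{Z(t) - Z(t_n)}$, and this is where $H \in C^{1,0}(\Delta; \R^{n\times n})$ and hypothesis~\eqref{eq.H1to02} are essential. Using $H(t,s) = H(t_n,s) + \int_{t_n}^t H_1(u,s)\,du$ for $0 \leq s \leq t_n \leq t$ together with the stochastic Fubini theorem, the ``memory'' contribution becomes
\[
\int_0^{t_n}[H(t,s) - H(t_n,s)] f(s)\,dB(s) = \int_{t_n}^t W_n(u)\,du, \qquad W_n(u) := \int_0^{t_n} H_1(u,s) f(s)\,dB(s),
\]
while the ``new'' contribution $\int_{t_n}^t [H(t,s) - H_\infty(s)] f(s)\,dB(s)$ splits, via $H(t,s) = H(s,s) + \int_s^t H_1(u,s)\,du$, into a martingale increment of $N(t) := \int_0^t [H(s,s) - H_\infty(s)] f(s)\,dB(s)$ plus a further pathwise Lebesgue-integrated Gaussian term. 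It\^o's isometry, the bounds~\eqref{eq.H1to02}, and Doob's maximal inequality then give $L^2$-estimates for each of these pieces in terms of $c_q(1+t_{n+1})^{2q}$, the step size $t_{n+1}-t_n$, and $\int_{t_n}^{t_{n+1}}\norm{H_\infty(s)}^2\,ds$. The difficulty is that~\eqref{eq.H1to02} allows polynomial-in-$t$ growth of these bounds, so a naive Chebyshev argument fails; one must instead exploit Gaussianity of each piece (which turns $L^2$-smallness into exponentially small tails) and use the logarithmic smallness in~\eqref{eq.Htilderateto02} as the quantitative anchor, if necessary refining the discretisation inside each block $[t_n,t_{n+1}]$ so that the products of growth and step size are summable. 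Once this oscillation estimate is in place, a second application of Borel--Cantelli finishes the proof that $Z(t) \to 0$ a.s., and hence $Y(t) \to 0$ a.s.
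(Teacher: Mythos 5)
You should first note that this paper does not actually prove Theorem~\ref{thm.assuff2}: it is quoted from the companion work \cite{AppDanRey}, so there is no in-paper proof to compare against line by line. Judged on its own terms, your proposal has the right architecture (split off the martingale tail $R(t)$, prove convergence along a discrete mesh via Gaussian tails and Borel--Cantelli, control the oscillation between mesh points using $H_1$), and the mesh-point step is correct: with $\epsilon(t):=\int_0^t\norm{H(t,s)-H_\infty(s)}^2ds$, hypothesis \eqref{eq.Htilderateto02} does make $\exp(-c\delta^2/\epsilon(t_n))$ summable whenever $\log t_n$ grows at least like $\log n$.

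The genuine gap is the choice $t_n=\e^n$, and it is fatal rather than cosmetic. Every one of your oscillation pieces carries a factor of the block length $t_{n+1}-t_n\sim(\e-1)\e^n$ multiplied by the polynomial bound $c_q(1+t_{n+1})^{2q}$ from \eqref{eq.H1to02}: for instance $\mathbb{E}\bigl[\bigl(\int_{t_n}^{t_{n+1}}\norm{W_n(u)}\,du\bigr)^2\bigr]\le(t_{n+1}-t_n)^2\,c_q\norm{f}_\infty^2(1+t_{n+1})^{2q}\sim\e^{2n(1+q)}$, and similarly $\mathbb{E}\norm{N(t)-N(t_n)}^2\lesssim(t_{n+1}-t_n)(1+t_{n+1})^{2q}$. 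These variances \emph{diverge}, so Gaussianity buys you nothing --- exponential tails at a scale that is itself exploding do not yield Borel--Cantelli summability, and no amount of appealing to \eqref{eq.Htilderateto02} rescues terms whose second moments are not even bounded. Your closing sentence (``if necessary refining the discretisation \ldots so that the products of growth and step size are summable'') is precisely the missing idea, not an optional refinement: the proof only closes if you take a \emph{polynomial} mesh $t_k=k^\theta$ with $\theta<1/(1+2q)$ from the outset. Then $t_{k+1}-t_k\sim\theta k^{\theta-1}\to0$, the key products behave like $(t_{k+1}-t_k)(1+t_{k+1})^{2q}\sim k^{\theta(1+2q)-1}\to0$ (this is exactly where the exponent restriction comes from, and where the pointwise bound on $\norm{H(t,t)}^2$ is consumed), while \eqref{eq.Htilderateto02} still gives $\epsilon(t_k)\cdot\theta\log k\to0$, which is just enough for the Gaussian tail bound $\exp(-c\delta^2/\epsilon(t_k))\le k^{-2}$ eventually. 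The paper's own Remark~\ref{rk:Hint2}, with its condition \eqref{eq:Hint2} on the intervals $[k^\theta,(k+1)^\theta]$ for $0<\theta<1/(1+2q)$, is the visible fingerprint of this mesh; until you commit to it and verify that each oscillation term is summable along it, the argument is not a proof.
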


\begin{remark} \label{rem.strengthenedtildeHto0}
We notice that \eqref{eq.Htilderateto02} implies a given rate of
decay to zero of $\int_0^t (H(t,s)-H_\infty(s))^2\,ds$ as
$t\to\infty$. This strengthens the hypothesis \eqref{eq.HtoHinfty2}
which is known, by Theorem~\ref{thm.ascharacterise2}, to be
necessary.
\end{remark}

\begin{remark}\label{rk:Hint2}
    While the pointwise bound on $H(t,t)$ given by
     $\norm{H(t,t)}^2 \leq c_q(1+t)^{2q}$ in Theorem~\ref{thm.assuff2}
 may appear quite mild,
    one may prefer an integral condition to this pointwise bound as this would allow for $H(t,t)$ to potentially have
    ``thin spikes" of larger than polynomial order. In \cite{AppDanRey} it is pointed out that this pointwise condition  can be replaced by
    \begin{equation}\label{eq:Hint2}
        \lim_{k\to\infty}\int_{k^{\theta}}^{(k+1)^{\theta}}\norm{H(s,s)}^2 \,ds \cdot \log k = 0, \quad \text{ for }0<\theta<1/(1+2q),
    \end{equation}
    where the limit is taken through the integers. Nevertheless for simplicity we retain the condition on $H(t,t)$ in the statement
    of Theorem~\ref{thm.assuff2}.
\end{remark}

\subsection{Asymptotic behaviour of a stochastic convolution integral} \label{sect:asystochdetconv}
In this section we state a key theorem which will be used to
determine the asymptotic behaviour of solutions of \text{\it
Volterra linear SFDEs} and \text{\it linear SFDEs with finite delay}
with state--independent noise intensity. This theorem is a
consequence of the stochastic admissibility results stated in
Section~\ref{sect:stochlim}.

To see the connection between these admissibility results and the
asymptotic behaviour of such affine equations, we note that both
classes of equations can be written in the form
\[
    dX(t) = \bigl(f(t)+L(X_t)\bigr)dt + \Sigma(t) \, dB(t), \quad t\geq0,
\]
where $L$ is a linear functional, $\Sigma\in C(\Rp;\R^{d\times
d'})$, $f\in C(\Rp;\R^{d})$, $B$ is a standard $d'$-dimensional
Brownian vector and the solution $X$ lies in $\mathbb{R}^{d}$. For
any $y:\R\to\R^{d\times n}$ we define the \emph{segment}
$y_t:\R\to\R^{d\times n}:s\mapsto y(t+s)$ for any
$n,d\in\mathbb{Z}^+$. An appropriate initial condition is also
imposed. The associated deterministic equation is
\[
    x'(t)=L(x_t), \quad t\geq0,
\]
with the same initial value as the stochastic equation. Also
defining the differential resolvent, $r$,
\begin{equation}\label{eq:linr}
    r'(t)=L(r_t), \quad t\geq0, \qquad r(0)=I_d,
\end{equation}
allows one to write the variation of parameters formula, for
$t\geq0$,
\[
    X(t)=x(t)+ \int_{0}^{t}r(t-s)f(s)\,ds + \int_{0}^{t}r(t-s)\Sigma(s)\, dB(s).
\]
The asymptotic behaviour of $x$ and $r$ is primarily known from the
theory of deterministic linear differential equations and so one may
now apply the admissibility theory of Section~\ref{sect:stochlim} to
determine the asymptotic behaviour of the stochastic convolution
integral, $\int_{0}^{t}r(t-s)\Sigma(s)\, dB(s)$, and hence of $X$,
providing that the diffusion, $\Sigma$, does not grow too rapidly.

\begin{proposition}\label{prop:Y}
    Let $\alpha\in\R$, $N$ be some finite positive integer, $\{\beta_j\}_{j=1}^{N}$ be a sequence of some
    real constants and $(P_j)_{j=1}^{N}$ and $(Q_j)_{j=1}^N$ be sequences of $d\times d$ matrix--polynomials of degree $n$,
    for some positive integer $n$, and in particular
\[
    P_j(t)=t^n P_j^* + O(t^{n-1}), \quad Q_j(t)=t^n Q_j^* + O(t^{n-1}).
\]
where at least one of $P_j^*,Q_j^*\neq 0$ for all $j\in
\{1,\ldots,N\}$. Suppose $R$ is a.e. absolutely continuous
         and is defined such that
         it obeys, for some $\epsilon>0$, the asymptotic estimates
        \begin{align}\label{eq:Rorder}
            R(t) &=
            \begin{cases}
                O(\e^{(\alpha-\epsilon) t}),    &\text{ if } n=0 \\
                O(\e^{\alpha t}t^{n-1}),        &\text{ if } n\geq1
            \end{cases}, \quad \text{ as }  t\to\infty, \\
            \label{eq:R'order}
            R'(t) &=
            \begin{cases}
                    O(\e^{(\alpha-\epsilon) t}),    &\text{ if } n=0 \\
                                O(\e^{\alpha t}t^n), &\text{ if } n\geq1
             \end{cases},
             \quad \text{ as }  t\to\infty.
        \end{align}
  and suppose that $r$ is given by
    \begin{equation}\label{eq:rR1}
        r(t) = \sum_{j=1}^{N}\e^{\alpha t}\{P_j(t)\cos(\beta_j t) + Q_j(t)\sin(\beta_j t)\} + R(t), \quad t\geq0.
    \end{equation}
Let $\Sigma\in C([0,\infty);\mathbb{R}^{d\times d'})$ be continuous
with
\begin{equation}\label{eq:SigL2}
\int_0^\infty e^{-2\alpha t}\|\Sigma(t)\|^2\,dt < + \infty.
\end{equation}
        Let $Y$ be the process defined by
        \begin{equation}\label{eq:Yvar}
            Y(t)=\int_{0}^{t}r(t-s)\Sigma(s) \, dB(s), \quad t\geq0, \quad Y(0)=0.
        \end{equation}
        Then
    \begin{equation}\label{eq:asyY}
        \lim_{t\to\infty} \left( \frac{Y(t)}{t^n\e^{\alpha t}}
         -\sum_{j=1}^{N}\{L_{1,j}\sin(\beta_j t) + L_{2,j}\cos(\beta_j t)\} \right) =0, \quad \text{a.s.}
    \end{equation}
    where
    \begin{subequations} \label{def.L}
    \begin{align} \label{def.L1}
        L_{1,j} &:= \int_{0}^{\infty}\e^{-\alpha s}\{ P_j^*\sin(\beta_j s) + Q_j^*\cos(\beta_j s) \}\Sigma(s) \,dB(s), \\
        \label{def.L2}
        L_{2,j} &:= \int_{0}^{\infty}\e^{-\alpha s}\{ P_j^*\cos(\beta_j s) - Q_j^*\sin(\beta_j s) \}\Sigma(s) \,dB(s).
    \end{align}
    \end{subequations}
\end{proposition}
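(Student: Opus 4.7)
The approach is to normalise $Y(t)$ by the growth factor $t^n\e^{\alpha t}$, use angle-subtraction identities to pull the $t$-dependent oscillations $\sin(\beta_j t)$, $\cos(\beta_j t)$ out of the stochastic integral, and then apply the almost-sure admissibility result (Theorem~\ref{thm.assuff2}) to each residual kernel to identify the limits $L_{1,j}$ and $L_{2,j}$.

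Substituting \eqref{eq:rR1} into the definition of $Y$ and applying the identities $\cos(\beta_j(t-s)) = \cos(\beta_j t)\cos(\beta_j s)+\sin(\beta_j t)\sin(\beta_j s)$ and $\sin(\beta_j(t-s)) = \sin(\beta_j t)\cos(\beta_j s)-\cos(\beta_j t)\sin(\beta_j s)$, regrouping the coefficients of $\sin(\beta_j t)$ and $\cos(\beta_j t)$, yields
\[
\frac{Y(t)}{t^n\e^{\alpha t}} = \sum_{j=1}^N\Bigl\{\sin(\beta_j t)\,\mathcal{A}_j(t)+\cos(\beta_j t)\,\mathcal{B}_j(t)\Bigr\}+\mathcal{R}(t),
\]
where $\mathcal{A}_j(t)=\int_0^t H_{1,j}(t,s)\,dB(s)$, $\mathcal{B}_j(t)=\int_0^t H_{2,j}(t,s)\,dB(s)$, $\mathcal{R}(t)=(t^n\e^{\alpha t})^{-1}\int_0^t R(t-s)\Sigma(s)\,dB(s)$, and
\[
H_{1,j}(t,s) := \frac{\e^{-\alpha s}}{t^n}\bigl[P_j(t-s)\sin(\beta_j s)+Q_j(t-s)\cos(\beta_j s)\bigr]\Sigma(s),
\]
with $H_{2,j}$ defined analogously (swap $\sin\leftrightarrow\cos$ and flip the sign on $Q_j$). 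For each fixed $s$, $P_j(t-s)/t^n\to P_j^*$ and $Q_j(t-s)/t^n\to Q_j^*$ as $t\to\infty$, so the pointwise limit $H_{1,j}^\infty(s) := \lim_{t\to\infty} H_{1,j}(t,s)$ is exactly the integrand appearing in \eqref{def.L1} defining $L_{1,j}$, with the analogous identification of $H_{2,j}^\infty$ with \eqref{def.L2}. Applying Theorem~\ref{thm.assuff2} to each of $H_{1,j}$ and $H_{2,j}$ then yields $\mathcal{A}_j(t)\to L_{1,j}$ and $\mathcal{B}_j(t)\to L_{2,j}$ almost surely.

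Verification of the hypotheses of Theorem~\ref{thm.assuff2} is as follows: (i) continuity and the $C^{1,0}$ property are immediate from the polynomial--exponential structure of the kernels; (ii) $L^2$-integrability of $H_{1,j}^\infty$ and $H_{2,j}^\infty$ follows from \eqref{eq:SigL2} and boundedness of the trigonometric and leading-coefficient factors; (iii) the polynomial growth bounds \eqref{eq.H1to02} on $\partial_t H_{1,j}(t,s)$ and $H_{1,j}(t,t)$ are controlled by the degrees of $P_j,Q_j$ together with \eqref{eq:SigL2}. The remainder $\mathcal{R}(t)$ is handled by a further application of Theorem~\ref{thm.assuff2} with zero limit kernel: for $n\geq 1$, \eqref{eq:Rorder} gives $(t-s)^{n-1}/t^n\leq 1/t$ and hence an $O(1/t^2)$ rate on the normalised $L^2$ kernel norm, absorbing the $\log t$ factor; for $n=0$ the strictly smaller exponent $\alpha-\epsilon$ yields exponential decay which dominates $\log t$; the growth bound \eqref{eq:R'order} delivers \eqref{eq.H1to02}. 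Combined with $|\sin(\beta_j t)|,|\cos(\beta_j t)|\leq 1$, these pieces give \eqref{eq:asyY}.

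\textbf{Main obstacle.} The delicate step is the rate condition \eqref{eq.Htilderateto02} for $H_{1,j}$ and $H_{2,j}$. From the decomposition
\[
\frac{P_j(t-s)}{t^n}-P_j^* = \frac{P_j(t-s)-(t-s)^n P_j^*}{t^n}+\Bigl(\frac{(t-s)^n}{t^n}-1\Bigr)P_j^*,
\]
the first summand is $O(1/t)$ uniformly for $s\in[0,t]$, but the second is only $O(s/t)$ and so fails to be small when $s$ approaches $t$; a naive pointwise bound $\|H_{1,j}(t,s)-H_{1,j}^\infty(s)\|\leq C\e^{-\alpha s}(1+s)\|\Sigma(s)\|/t$ then gives an $L^2$-norm in $s$ that under \eqref{eq:SigL2} alone need not decay faster than $1/\log t$. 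The remedy is to split the integration $[0,t] = [0,T(t)]\cup [T(t),t]$ for a suitably slowly growing $T(t)\to\infty$: on $[0,T(t)]$ the uniform polynomial approximation of $P_j(t-s)/t^n$ to $P_j^*$ makes $H_{1,j}(t,s)-H_{1,j}^\infty(s)$ small uniformly in $s$; on $[T(t),t]$ the integrand is dominated uniformly in $t$ by an integrable tail from \eqref{eq:SigL2}. Choosing $T(t)$ so that both contributions, multiplied by $\log t$, tend to zero yields \eqref{eq.Htilderateto02} and completes the argument.
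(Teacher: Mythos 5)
Your overall decomposition (pull out the oscillations with angle--subtraction identities, treat the remainder $R$ separately, identify the limits as $L_{1,j}$, $L_{2,j}$) matches the paper's, and you have correctly located the critical difficulty: the rate condition \eqref{eq.Htilderateto02} for the leading--order kernels. However, your proposed remedy does not close the gap. Writing $g(s)=\e^{-2\alpha s}\norm{\Sigma(s)}^2$, your split of $[0,t]$ at $T(t)$ requires $\log t\cdot\int_{T(t)}^{\infty}g(s)\,ds\to 0$ for some $T(t)\le t$. Since $T\mapsto\int_T^\infty g$ is nonincreasing, this forces $\log t\cdot\int_t^\infty g(s)\,ds\to0$, which is \emph{not} implied by \eqref{eq:SigL2}: for $g(s)=\bigl((s+\e)\log^2(s+\e)\bigr)^{-1}$ one has $\int_t^\infty g = 1/\log(t+\e)$, so the tail contribution times $\log t$ is bounded away from zero for every admissible $T(t)$, while taking $T(t)$ close to $t$ blows up the first contribution. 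The same defect appears in your treatment of the $n=0$ remainder term: $\int_0^t\e^{-2\epsilon(t-s)}g(s)\,ds$ tends to zero by dominated convergence but carries no rate, so multiplying by $\log t$ again fails under \eqref{eq:SigL2} alone. In short, Theorem~\ref{thm.assuff2} cannot be applied to any kernel whose limit $H_\infty$ is nonzero (or whose decay is governed only by the unquantified tail of $g$) without strengthening the hypothesis on $\Sigma$.

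The paper circumvents this by never applying Theorem~\ref{thm.assuff2} to the leading--order terms. It expands $(t-s)^n/t^n$ by the binomial theorem: the $k=0$ term has kernel exactly equal to the limiting integrand, so its convergence is immediate from the martingale convergence theorem (no rate needed); the terms $t^{-k}\int_0^t s^k f(s)\,dB(s)$, $k\ge1$, with $f\in L^2$, are sent to zero by the bespoke Lemma~\ref{lemma.tminkskfto0}, whose proof is a direct semimartingale argument via It\^o's formula on $\norm{K(t)}^2$ and again requires no logarithmic rate. The lower--order polynomial pieces $P_{j,n-1},Q_{j,n-1}$ and the remainder $R$ (for $n\ge1$) \emph{are} handled by Theorem~\ref{thm.assuff2}, but there $H_\infty=0$ and the normalised $L^2$ norm decays like $t^{-2}$, which beats $\log t$; for $n=0$ the remainder is handled by the separate Lemma~\ref{lm:kL2} (stochastic Fubini plus Doob plus a summable--subsequence argument). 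To repair your proof you would need either to adopt this binomial/martingale route for the leading terms, or to import an auxiliary convergence result that does not demand a $\log t$ rate.
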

The square integrability, $L^2(0,\infty)$, of the noise term, i.e.
\eqref{eq:SigL2}, is a usual condition to have when dealing with
stochastic terms. When ascertaining asymptotic behaviour of
deterministic forcing functions it is more typical to require an
absolute integrability condition, $L^1(0,\infty)$. This is indeed
what is required in Corollary~\ref{cor:Yf}, i.e. \eqref{eq:fL1}.
Proposition~\ref{prop:Y} is shown to be robust with respect to
deterministic perturbations.
\begin{corollary}\label{cor:Yf}
Let $\alpha\in\mathbb{R}$, $N\in\mathbb{Z}^+/\{0\}$. Let
$\{\beta_j\}_{j=1}^{N},\{P_j\}_{j=1}^{N},\{Q_j\}_{j=1}^{N},r,R,\Sigma$
and $Y$ be as defined in Proposition~\ref{prop:Y}, with
\eqref{eq:SigL2} holding. Let $f\in C([0,\infty),\mathbb{R}^{d})$
with
\begin{equation}\label{eq:fL1}
    \int_{0}^{\infty}\e^{-\alpha t}|f(t)|dt <+\infty.
\end{equation}
 Let $V$ be the process defined by
\begin{equation}\label{eq:VpY}
    V(t) = \int_{0}^{t}r(t-s)f(s)ds + Y(t), \quad t\geq0, \quad V(0)=0.
\end{equation}
Then
\[
    \lim_{t\to\infty}\left(\frac{V(t)}{t^n\e^{\alpha t}}
    -\sum_{j=1}^{N}\{M_{1,j}\sin(\beta_j t) + M_{2,j}\cos(\beta_j t)\} \right) =0, \quad a.s.
\]
where
\begin{align*}
    M_{1,j} &= L_{1,j} + \int_{0}^{\infty}\e^{-\alpha s}\{P^*_j \sin(\beta_j s) + Q_j^*\cos(\beta_j s)\}f(s) \,ds, \\
    M_{2,j} &= L_{2,j} + \int_{0}^{\infty}\e^{-\alpha s}\{P_j^* \cos(\beta_j s) - Q_j^*\sin(\beta_j s)\}f(s) \,ds
\end{align*}
and where $L_{1,j}$ and $L_{2,j}$ are given by
Proposition~\ref{prop:Y}.
\end{corollary}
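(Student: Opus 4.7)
The plan is to split $V = Y + W$ where $W(t):=\int_0^t r(t-s)f(s)\,ds$ is purely deterministic; Proposition~\ref{prop:Y} already yields the required asymptotic expansion of $Y(t)/(t^n\e^{\alpha t})$ with the random multipliers $L_{1,j}, L_{2,j}$, so the entire task is to show that
\[
\lim_{t\to\infty}\left(\frac{W(t)}{t^n \e^{\alpha t}} - \sum_{j=1}^{N}\bigl\{N_{1,j}\sin(\beta_j t)+N_{2,j}\cos(\beta_j t)\bigr\}\right)=0,
\]
where $N_{1,j}, N_{2,j}$ are the deterministic integrals against $f$ appearing in $M_{1,j}-L_{1,j}$ and $M_{2,j}-L_{2,j}$ respectively. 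Adding the two limits then produces the statement.

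To analyse $W(t)$, I would substitute the representation \eqref{eq:rR1} for $r$, so that $W(t)=\sum_{j=1}^N W_j(t) + W_R(t)$ with
\[
W_j(t) = \int_0^t \e^{\alpha(t-s)}\bigl\{P_j(t-s)\cos(\beta_j(t-s)) + Q_j(t-s)\sin(\beta_j(t-s))\bigr\}f(s)\,ds
\]
and $W_R(t)=\int_0^t R(t-s)f(s)\,ds$. Each polynomial $P_j$ and $Q_j$ is decomposed as $P_j(t-s) = (t-s)^n P_j^* + \tilde P_j(t-s)$ with $\tilde P_j(u)=O(u^{n-1})$ (similarly for $Q_j$), and then I expand $(t-s)^n=\sum_{k=0}^n \binom{n}{k}t^{n-k}(-s)^k$. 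Applying the angle-subtraction identities for $\cos(\beta_j(t-s))$ and $\sin(\beta_j(t-s))$ separates each integrand into a product of $\sin(\beta_j t)$ or $\cos(\beta_j t)$ (functions of $t$ alone) with $s$-dependent factors. The leading-order contribution, corresponding to the $k=0$ term in the binomial expansion and $P_j^*, Q_j^*$, is exactly
\[
t^n\e^{\alpha t}\Bigl[\sin(\beta_j t)\!\!\int_0^t\!\! \e^{-\alpha s}\{P_j^*\sin(\beta_j s)+Q_j^*\cos(\beta_j s)\}f(s)\,ds + \cos(\beta_j t)\!\!\int_0^t \!\!\e^{-\alpha s}\{P_j^*\cos(\beta_j s)-Q_j^*\sin(\beta_j s)\}f(s)\,ds\Bigr],
\]
and the hypothesis \eqref{eq:fL1} forces each of the inner integrals to converge as $t\to\infty$ to the expressions defining $N_{1,j}$ and $N_{2,j}$.

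The main technical step is controlling the remainder terms after dividing by $t^n\e^{\alpha t}$. For every $k\geq 1$ and the $\tilde P_j,\tilde Q_j$ contributions, one is left with integrals of the schematic form $t^{n-1-k}\e^{\alpha t}\int_0^t \e^{-\alpha s}s^k f(s)\,ds$ (with trigonometric factors in $t$ outside). Since \eqref{eq:fL1} gives $\int_0^t \e^{-\alpha s}|f(s)|\,ds=O(1)$, the bound $\int_0^t \e^{-\alpha s}s^k|f(s)|\,ds\le t^k \int_0^t \e^{-\alpha s}|f(s)|\,ds=O(t^k)$ shows that each of these remainder terms is $O(t^{n-1}\e^{\alpha t})$, hence $o(t^n\e^{\alpha t})$; in the case $n=0$ there are no such remainders to treat. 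Finally, $W_R(t)$ is handled directly from the asymptotic bounds \eqref{eq:Rorder}: when $n\ge 1$ the convolution of $R(u)=O(\e^{\alpha u}u^{n-1})$ with $\e^{-\alpha s}f(s)\in L^1(\Rp)$ yields $W_R(t)=O(t^{n-1}\e^{\alpha t})$, and when $n=0$ the additional decay rate $\e^{-\epsilon u}$ easily gives $W_R(t)=o(\e^{\alpha t})$. The main obstacle is book-keeping in the binomial/trigonometric expansion and verifying that the non-leading polynomial pieces really drop out; but once \eqref{eq:fL1} is used to bound moments of $\e^{-\alpha s}f(s)$ by powers of $t$, each such piece is strictly of order $o(t^n\e^{\alpha t})$, and combining everything with Proposition~\ref{prop:Y} concludes the proof.
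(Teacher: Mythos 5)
Your overall strategy coincides with the paper's: split $V=Y+W$ with $W(t)=\int_0^t r(t-s)f(s)\,ds$, invoke Proposition~\ref{prop:Y} for $Y$, decompose $r=S+R$ via \eqref{eq:rR1}, expand the polynomials by Newton's binomial theorem and the trigonometric factors by the angle-subtraction identities, and estimate the $R$-convolution directly from \eqref{eq:Rorder}. However, there is a genuine gap in your treatment of the remainder terms. The $k\ge 1$ terms of the binomial expansion of the \emph{leading} monomial $(t-s)^nP_j^\ast$ carry the prefactor $t^{n-k}$, not $t^{n-1-k}$; after dividing by $t^n\e^{\alpha t}$ they take the form $t^{-k}\int_0^t s^k(\cdots)\e^{-\alpha s}f(s)\,ds$, and your crude bound $\int_0^t s^k\e^{-\alpha s}|f(s)|\,ds\le t^k\int_0^t\e^{-\alpha s}|f(s)|\,ds$ only shows these are $O(1)$ after normalisation, i.e. $O(t^n\e^{\alpha t})$ rather than $o(t^n\e^{\alpha t})$. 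Your schematic form $t^{n-1-k}\e^{\alpha t}\int_0^t\e^{-\alpha s}s^kf(s)\,ds$ is correct only for the lower-order pieces $\tilde P_j,\tilde Q_j$, so as written the argument fails to dispose of precisely those terms that sit at the critical order.

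The missing ingredient is the paper's Lemma~\ref{lm:tlim}: for $\phi\in L^1([0,\infty);\R^{d})$ and $j\ge 1$ one has $t^{-j}\int_0^t s^j\phi(s)\,ds\to 0$ as $t\to\infty$. This is proved by splitting the integral at $\theta t$ for $\theta\in(0,1)$, which yields the bound $\theta^j\int_0^\infty|\phi(s)|\,ds+\int_{\theta t}^\infty|\phi(s)|\,ds$, then taking the limit superior in $t$ and letting $\theta\to 0$; it is the deterministic analogue of Lemma~\ref{lemma.tminkskfto0}. Applied with $\phi$ equal to the (bounded) trigonometric factor times $\e^{-\alpha s}f(s)$, which is integrable by \eqref{eq:fL1}, it shows the $k\ge 1$ binomial terms genuinely vanish in the limit. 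With this lemma inserted, the remainder of your argument --- including the $n=0$ case, where the $R$-convolution should be recognised as the convolution of an $L^1$ function with a function tending to zero --- goes through and coincides with the paper's proof.
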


\section{Affine Stochastic Functional Differential Equations}
The organisation of this section is as follows: in the first part of
this section, we discuss the structure of solutions of affine
stochastic Volterra functional, and show that the resolvent of the
underlying deterministic equation can play the role of the function
$r$ introduced in the statement of Proposition~\ref{prop:Y}, modulo
some deterministic asymptotic estimates. The second part of the
section contains a parallel discussion for the solution of the
affine stochastic functional differential equation with finite
delay. These preliminary discussions pave the way for main
asymptotic results for both stochastic Volterra and finite delay
equations which are stated in Section~\ref{sect:actualresults}.

\subsection{Volterra linear functional equations}\label{sub.Vol}
A Shea-Wainger theorem is developed  in \cite{apprie} which relates
the location of the roots of a characteristic equation to the
solution of a Volterra linear SFDE lying in a weighted
$L^{p}$-space. We reproduce the set-up of those equations here.

Let $(\Omega, \mathcal{F}, \P)$ be a complete probability space
equipped with a filtration $(\mathcal{F}_{t})_{t \geq 0}$, and let
$(B(t))_{t \geq 0}$ be a standard $d^\prime$-dimensional Brownian
motion on this probability space. Consider the stochastic
integro-differential equation with stochastic perturbations of the
form
\begin{align}\label{eq.stoch}
\begin{split}
 dX(t)&= \left(f(t) + \int_{[0,t]} \mu(ds)\, X(t-s)\right)dt + \Sigma(t) \,dB(t)
 \quad\text{for }t\ge 0,\\
  X(0)&=X_0,
\end{split}
\end{align}
where $\mu$ is a measure in $M(\Rp,\R^{d\times d})$, $\Sigma\in
C(\Rp;\R^{d\times d'})$, $f\in C(\Rp;\R^{d})$. The initial condition
$X_0$ is an $\R^d$-valued, $\mathcal{F}_0$-measurable random
variable with $\mathbb{E}\abs{X_0}^2<\infty$. The existence and
uniqueness of a continuous solution $X$ of \eqref{eq.stoch} with
$X(0)=X_0$ $\P$-a.s. is covered in Berger and
Mizel~\cite{BergMiz:2}, for instance. Independently, the existence
and uniqueness of solutions of stochastic functional equations was
established in It\^o and Nisio~\cite{ItoNisio:64} and
Mohammed~\cite{Moh:84}.

The so-called {\em fundamental solution or resolvent of
\eqref{eq.stoch}} is the matrix-valued function $r:\Rp\to\R^{d\times
d}$, which is the unique solution of
\begin{align}    \label{eq.fund1}
r'(t) = \int_{[0,t]}  \mu(ds)\, r(t-s) \quad\text{for } t
\geq0,\quad  r(0)=\I_d.
\end{align}
In the following Proposition, we give a variation of constants
formula for the solution of \eqref{eq.stoch} in terms of the
solution $r$ of \eqref{eq.fund1}. The proof is a simple adaptation
of a result of Rei\ss, Riedle and van Gaans \cite[Lemma 6.1]{rrg}.
\begin{proposition} \label{prop.stochresrep}
Let $\mu\in M(\Rp,\R^{d\times d})$, $\Sigma\in C(\Rp;\R^{d\times
d'})$, $f\in C(\Rp;\R^{d})$, and suppose that $r$ is the unique
continuous solution of \eqref{eq.fund1}. Then the unique continuous
adapted process $X$ which obeys \eqref{eq.stoch} is given by
\begin{align}\label{eq.stochconfor}
 X(t)=r(t)X_0 + \int_{0}^{t}r(t-s)f(s)\,ds + \int_0^t r(t-s)\Sigma(s)\,dB(s)\quad\text{$\P$-a.s.}
\end{align}
\end{proposition}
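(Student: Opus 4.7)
The strategy is to define the candidate process
\[
    \tilde X(t) := r(t)X_0 + \int_0^t r(t-s)f(s)\,ds + \int_0^t r(t-s)\Sigma(s)\,dB(s)
\]
and verify directly that it satisfies the integrated form of \eqref{eq.stoch}, namely
\[
    \tilde X(t) = X_0 + \int_0^t\!\!\left(f(s)+\int_{[0,s]}\mu(du)\,\tilde X(s-u)\right)\!ds + \int_0^t \Sigma(s)\,dB(s), \quad t\ge 0.
\]
Once this is shown, uniqueness of the continuous adapted solution (cited from Berger--Mizel and It\^o--Nisio) forces $X=\tilde X$ $\P$-a.s. The key algebraic fact powering everything is the integrated resolvent identity obtained from \eqref{eq.fund1},
\[
    r(t) = \I_d + \int_0^t\int_{[0,v]}\mu(du)\,r(v-u)\,dv = \I_d + \int_{[0,t]}\mu(du)\int_0^{t-u}r(w)\,dw,
\]
which encodes $r'=\mu\ast r$ in a form suitable for use inside stochastic integrals.

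For the two deterministic summands the verification is classical: substituting the integrated resolvent identity into $r(t)X_0$ and into $(r\ast f)(t)=\int_0^t r(t-s)f(s)\,ds$ and applying Fubini's theorem (which is legitimate because $r$, $f$ are continuous and $|\mu|(\Rp)<\infty$ by \eqref{eq.totalvarest}) rewrites these as $X_0 + \int_0^t\int_{[0,s]}\mu(du)\,r(s-u)X_0\,ds$ and $\int_0^t f(s)\,ds + \int_0^t\int_{[0,s]}\mu(du)\,(r\ast f)(s-u)\,ds$ respectively. Summing already accounts for two of the three contributions to $\int_0^t\int_{[0,s]}\mu(du)\,\tilde X(s-u)\,ds$ together with the forcing $\int_0^t f(s)\,ds$ and the constant $X_0$.

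The substantive step is to handle the stochastic convolution $Z(t):=\int_0^t r(t-s)\Sigma(s)\,dB(s)$. Writing $r(t-s)$ via the integrated resolvent identity gives
\[
    Z(t) = \int_0^t \Sigma(s)\,dB(s) + \int_0^t\!\left(\int_{[0,t-s]}\!\mu(du)\!\int_0^{t-s-u}\!\!r(w)\,dw\right)\!\Sigma(s)\,dB(s).
\]
A stochastic Fubini theorem (the version used in Rei\ss--Riedle--van Gaans suffices) is then invoked to interchange the total variation integral against $\mu$ with the It\^o integral against $\Sigma\,dB$, after which a change of variable repackages the triple integral as $\int_0^t\int_{[0,s]}\mu(du)\,Z(s-u)\,ds$. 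Adding this to the deterministic computation above recovers precisely the integrated SDE for $\tilde X$, completing the verification.

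The principal obstacle is the Fubini swap in the stochastic convolution: one must verify the joint measurability of $(s,u,w)\mapsto r(w)\Sigma(s)\mathbf{1}_{\{s+u+w\le t\}}$ and establish the integrability condition
\[
    \int_0^t\int_{[0,t-s]}|\mu|(du)\!\int_0^{t-s-u}\!\!\norm{r(w)}^2\,dw\,\norm{\Sigma(s)}^2\,ds < +\infty
\]
needed to apply the stochastic Fubini theorem on each finite horizon. This follows from continuity of $r$ and $\Sigma$ on $[0,t]$ together with $|\mu|(\Rp)<\infty$, so the remaining work is bookkeeping rather than analysis; this is exactly why the authors describe the proof as a simple adaptation of \cite[Lemma~6.1]{rrg}.
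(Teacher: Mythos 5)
Your proof is correct, but it follows a genuinely different route from the paper's. The paper does not verify the candidate formula directly: it first introduces $w$, the unique continuous solution of $w'(t)=f(t)+\int_{[0,t]}\mu(ds)\,w(t-s)$ with $w(0)=0$, for which the classical deterministic variation of constants gives $w(t)=\int_0^t r(t-s)f(s)\,ds$; the process $Z=X-w$ then solves the same affine equation with $f\equiv 0$, and the representation $Z(t)=r(t)X_0+\int_0^t r(t-s)\Sigma(s)\,dB(s)$ is quoted directly from Rei\ss, Riedle and van Gaans \cite[Lemma~6.1]{rrg}, after which one simply rearranges to obtain \eqref{eq.stochconfor}. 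In other words, the paper outsources precisely the step you labour over --- the stochastic Fubini interchange showing that the stochastic convolution satisfies the noise-driven equation --- to the cited lemma, and contributes only the reduction that strips out the deterministic forcing. Your argument is in effect a self-contained reproof of that lemma, extended to handle $f$ in the same pass: the integrated resolvent identity, the deterministic Fubini swaps, and the stochastic Fubini argument with its measurability and square-integrability checks are all legitimate on a finite horizon because $r$ and $\Sigma$ are continuous (hence bounded on compacts) and $|\mu|$ is a finite measure, and the concluding appeal to uniqueness of the continuous adapted solution is the same in both treatments. What your approach buys is independence from the external reference; what the paper's buys is brevity and a clean separation of the deterministic and stochastic parts of the verification.
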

The proof is given in Section~\ref{sect:proof3}.

The chief difficulty in estimating the asymptotic behaviour of $X$
therefore lies in determining the asymptotic behaviour of the
stochastic convolution integral on the right--hand side of
\eqref{eq.stochconfor}. However, we argue below that the solution
$r$ of \eqref{eq.fund1} can be decomposed as in \eqref{eq:rR1}, with
leading order exponential polynomial behaviour and the remainder
terms obeying the growth estimates of the form \eqref{eq:Rorder} and
\eqref{eq:R'order}. Then, the last term on the righthand side of
\eqref{eq.stochconfor} is of the form of the process $Y$ defined in
\eqref{eq:Yvar}, and therefore, under appropriate growth conditions
on $\Sigma$, Proposition~\ref{prop:Y} can be applied to this term.

In order to do this, we start by defining the real number
$\alpha^\ast$ by
\begin{equation}\label{eq:tranE}
    \alpha^* = \inf\{a\in\R: \int_{[0,\infty)}\e^{-a s}|\mu|(ds) \text{ is well--defined and finite} \}.
\end{equation}
Then the function $h_\mu:\mathbb{C}\to\C$ defined by
\[
    h_\mu(\lambda) = \text{det}\left( \lambda I_d - \int_{[0,\infty)} \e^{-\lambda s}\mu(ds) \right).
\]
is well--defined for $\Re(\lambda)>\alpha^*$.

Define also the set
\[
    \Lambda=\{\lambda\in\C: h_\mu(\lambda)=0 \}.
\]
The function $h_\mu$ is analytic, and so the elements of $\Lambda$
are isolated. Define
\begin{equation}\label{eq.alpha1}
    \alpha := \sup\{ \Re(\lambda) : h_\mu(\lambda)=0 \}.
\end{equation}
It is always the case that such an $\alpha$ is finite, we assume
however that $\alpha^*<\alpha$. Because the solution $r$ obeys an
exponentially growing or decaying upper bound, this is equivalent to
assuming that there exists $\lambda\in\mathbb{C}$ with
$\Re(\lambda)>\alpha^*$ for which $h_\mu(\lambda)=0$.

With the assumption $\alpha^\ast<\alpha$, there exists
$\delta\in(0,\alpha-\alpha^\ast)$. By the Riemann--Lebesgue lemma,
cf. e.g. \cite[Thm.~2.2.7~(i)]{GrLoSt90},
for such a $\delta>0$ there exists $M=M(\delta)>0$ such that
$h_\mu(\lambda)\neq 0$ for all $\lambda\in\mathbb{C}$ such that
$\alpha^*<\alpha-\delta\leq \Re(\lambda)\leq \alpha+\delta$ and
$|\Im(\lambda)|\geq M(\delta)$. If $K=\{\lambda\in
\mathbb{C}:0<|\Re(\lambda)-\alpha|<\delta, \,|\Im(\lambda)|\leq
M(\delta)\}$, the fact that $h_\mu$ is analytic ensures that there
are at most finitely many zeros of $h_\mu$ in $K$. Therefore, there
exists a minimal $\varepsilon\in (0,\delta]$ such that
$h_\mu(\lambda)\neq 0$ for all
$\alpha-\varepsilon\leq\Re(\lambda)<\alpha$, and therefore there
exists $\delta'=\alpha-\varepsilon$ such that $h_\mu(z)\neq 0$ for
all $\Re(z)=\delta'$. Define $\varphi(t)=e^{-\delta't}$ for $t\in
\mathbb{R}$. Then $\varphi$ is a submultiplicative weight function
on $\mathbb{R}$ for which
$\omega_\varphi=\alpha_\varphi=\delta'=\alpha-\varepsilon$. Define
$\Lambda_\varepsilon=\{\lambda\in
\Lambda:\Re(\lambda)>\alpha-\varepsilon\}$. Clearly
$\Lambda_\varepsilon$ is a set with only finitely many elements, as
is $\Lambda'=\{\lambda\in \Lambda:\Re(\lambda)=\alpha\}$. Then by
Theorem~7.2.1 in \cite{GrLoSt90}, there exists an a.e. absolutely
continuous function $q$ such that $q, q'\in
L^1(\Rp;\varphi;\mathbb{R}^{d\times d})$ and
\begin{equation}  \label{eq.solsumrepvol}
r(t)=\sum_{\lambda_j\in \Lambda_\varepsilon, \Im(\lambda_j)\geq0}
e^{\alpha_j t} \{P_j(t)\cos(\beta_j t)+Q_j(t)\sin(\beta_j t)\} +
q(t), \quad t\geq 0.
\end{equation}
where $\Re(\lambda_j)=\alpha_j$ and $\Im(\lambda_j)=\beta_j$, and
where $P_j$ and $Q_j$ are matrix--valued polynomials of degree
$n_j$, with $n_j +1$ being the order of the pole
$\lambda_j=\alpha_j+i\beta_j$ of $[h_\mu]^{-1}$. We remark that
$n_j$ (the ascent of $\lambda_j$) is less than or equal to the
multiplicity of the zero $\lambda_j=\alpha_j+i\beta_j$ of $h_\mu$.

Let $n$ denote the highest degree of all polynomials associated with
roots in $\Lambda'$ and
let $\lambda_1,...,\lambda_N$ be the finitely many roots in
$\Lambda'$
which have associated polynomials of this degree and have
$\Im(\lambda_j)=\beta_j\geq0$. We associate with each such
$\lambda_j=\alpha+i\beta_j$ the  matrix polynomials $P_j$ and
$Q_{j}$ of degree $n$ in \eqref{eq.solsumrepvol}. Therefore we may
write
\begin{equation}  \label{eq.PjQjstarvol}
    P_j(t)=t^n P_j^* + O(t^{n-1}), \quad Q_j(t)=t^n Q_j^* + O(t^{n-1}).
\end{equation}
where at least one of $P_j^*$ and $Q_j^*$ are not equal to the zero
matrix, for each $j\in\{1,...,N\}$. The precise values of $P_j^*$
and $Q_j^*$ can be determined from the Laurent series of the inverse
of the characteristic function, $h_{\mu}$, expanded about
$\lambda_j$, i.e.
\begin{equation}\label{eq:tayK}
    \left[\lambda I_d - \int_{[0,\infty)}\e^{-\lambda s}\mu(ds)\right]^{-1}
    = \sum_{m=0}^{n}\frac{m! \,K_{j,m}}{(\lambda-\lambda_j)^{m+1}} + \hat{q}_j(\lambda),
\end{equation}
where the remainder term $\hat{q}_j(\lambda)$ is analytic at
$\lambda_j$. If $\lambda_j$ is real then $P_j^*=K_{j,n}$, otherwise
$P^*_j:=2\,\Re(K_{j,n})$ and  $Q^*_j:=-2\,\Im(K_{j,n})$. We note
that \eqref{eq:tayK} defines the value of $n$.

Now define
\begin{equation}\label{eq:rRvol}
   R(t) =  r(t) - \sum_{j=1}^{N}\e^{\alpha t}\{P_j(t)\cos(\beta_j t) + Q_j(t)\sin(\beta_j t)\}, \quad t\geq0.
\end{equation}
It is clear that $R$ is a.e. absolutely continuous. Therefore, by
virtue of the decomposition in \eqref{eq:rR1} in the statement of
Proposition~\ref{prop:Y}, if the growth estimates \eqref{eq:Rorder}
and \eqref{eq:R'order} can be established for $R$ defined by
\eqref{eq:rRvol}, we will be in a excellent position to apply
Proposition~\ref{prop:Y} to the stochastic convolution term on the
right--hand side of \eqref{eq.stochconfor}. The relevant estimates
will be provided in Lemma~\ref{lm:Rvol}, which is stated in
Section~\ref{sect:actualresults}.
%
%
%
\subsection{Finite delay linear functional equations.}\label{sub.finite}
The exact rate of growth of the running maxima of solutions of
affine SFDEs with finite memory is discussed in \cite{appmaowu}. We
reproduce the set-up of those equations here.

Let $(\Omega, \mathcal{F}, \P)$ be a complete probability space
equipped with a filtration $(\mathcal{F}_{t})_{t \geq 0}$, and let
$(B(t))_{t \geq 0}$ be a standard $d^\prime$-dimensional Brownian
motion on this probability space. Consider the stochastic
integro-differential equation of the form
\begin{align}\label{eq.stoch2}
\begin{split}
 dX(t)&= \left(f(t) + \int_{[-\tau,0]} \nu(ds)\, X(t+s)\right)dt + \Sigma(t)\,dB(t)
 \quad\text{for }t\ge 0,\\
  X(t)&=\phi(t), \quad t\in[-\tau,0],
\end{split}
\end{align}
where $\nu$ is a measure in $M([-\tau,0],\R^{d\times d})$,
$\Sigma\in C(\Rp;\R^{d\times d'})$, $f\in C(\Rp;\R^{d})$. For every
$\phi\in C([-\tau,0],\R^d)$ there exists a unique, adapted strong
solution $(X(t,\phi): t\geq-\tau)$ with finite second moments of
\eqref{eq.stoch2} (cf., e.g., Mao \cite{Mao2}). The dependence of
the solution on the initial condition $\phi$ is neglected in our
notation in what follows; that is, we will write $X(t)=X(t,\phi)$
for the solution of \eqref{eq.stoch2}.

Turning our attention to the deterministic equation in $\R^d$
underlying \eqref{eq.stoch2}. For fixed constant $\tau\geq0$:
\begin{align}\label{eq.fund2}
    x'(t) = \int_{[-\tau,0]}  \nu(ds)\, x(t+s) \quad\text{for } t \ge 0,\quad  x(t)=\phi(t) \quad t\in[-\tau,0].
\end{align}
 For every $\phi\in C([-\tau,0],\R^d)$ there is a unique $\mathbb{R}^{d}$-valued
function $x=x(\cdot,\phi)$ which satisfies (\ref{eq.fund2}).

The so-called {\em fundamental solution or resolvent of
\eqref{eq.stoch2}} is the matrix-valued function
$r:\Rp\to\R^{d\times d}$, which is the unique solution of
\begin{align}    \label{eq.fund22}
r'(t) = \int_{[\max\{-\tau,-t\},0]}  \nu(ds)\, r(t+s) \quad\text{for
} t \geq0,\quad  r(0)=\I_d.
\end{align}
For convenience one could set $r(t)=0_{d,d}$ for $t\in[-\tau,0)$.

The solution $x(\cdot,\phi)$ of \eqref{eq.fund2} for an arbitrary
initial segment $\phi$ exists, is unique, and can be represented as
\[
    x(t,\phi) = r(t)\phi(0) + \int_{-\tau}^{0}\int_{[-\tau,u]}\nu(ds)r(t+s-u)\phi(u)du, \quad \text{ for } t\geq0;
\]
cf. Diekmann et al. \cite[Chapter I]{odsgslhw}.

By Rei\ss, Riedle and van Gaans \cite[Lemma 6.1]{rrg} the solution
$(X(t): t\geq-\tau)$ obeys a variation of constants formula:
\begin{align}\label{eq.stochconfor2}
 X(t)=
 \begin{cases}
 x(t) + \int_{0}^{t}r(t-s)f(s)\,ds + \int_0^t r(t-s)\Sigma(s)\,dB(s), &\quad t\geq 0,\\
 \phi(t), &\quad t\in[-\tau,0].
 \end{cases}
\end{align}
The process $X$ defined by \eqref{eq.stochconfor2} obeys
\eqref{eq.stoch2} pathwise on an almost sure event.

In order to determine the asymptotic behaviour of the solution $X$
of \eqref{eq.stoch2}, we argue below, in a very similar manner to
that given in Section 3.1, that the asymptotic behaviour of the
stochastic convolution term on the right--hand side of
\eqref{eq.stochconfor2} can be tackled by identifying the resolvent
$r$ in \eqref{eq.fund22} with the function in \eqref{eq:rR1} and the
convolution term with the integral $Y$ defined by \eqref{eq:Yvar}.

Towards this end, we start by defining the function
$g_\nu:\mathbb{C}\to\C$ by
\[
    g_\nu(\lambda) = \text{det}\left( \lambda I_d - \int_{[-\tau,0]} \e^{\lambda s}\nu(ds) \right).
\]
and also the set of its zeros
\[
   \Lambda=\{\lambda\in\C: g_\nu(\lambda)=0 \}.
\]
The function $g_\nu$ is analytic, and so the elements of $\Lambda$
are isolated. Define
\begin{equation}\label{eq.alpha2}
    \alpha := \sup\{ \Re(\lambda) : g_\nu(\lambda)=0 \}.
\end{equation}
Once again $\alpha$ is finite. Furthermore the cardinality of
$\Lambda'= \{\Re(\lambda)=\alpha : \lambda\in\Lambda \}$ is finite.
Then, following a similar argument as in Subsection~\ref{sub.Vol},
there exists $\varepsilon_0>0$ such that $g_\nu(\lambda)\not=0$ for
$\alpha-\varepsilon_0\leq\Re(\lambda)<\alpha$ and hence
$g_\nu(\lambda)\not=0$ on the line $\Re(\lambda)=\varepsilon$ for
every $\varepsilon\in(0,\varepsilon_0)$. Thus we have
\begin{align}\label{eq.solsumrep}
r(t)e^{-\alpha t}= \sum_{\substack{\lambda_j\in \Lambda',
\Im(\lambda_j)\geq0}} \Big( \tilde{P}_j(t)\cos(\beta_j) t)+
\tilde{Q}_j(t)\sin(\beta_j) t)\Big) + o(e^{-\varepsilon t}), \,
t\to\infty,
\end{align}
where $\Re(\lambda_j)=\alpha$ and $\Im(\lambda_j)=\beta_j$, and
where $\tilde{P}_j$ and $\tilde{Q}_j$ are matrix--valued polynomials
of degree $n_j$, with $n_j+1$ being the order of the pole
$\lambda_j=\alpha+i\beta_j$ of $[g_\nu]^{-1}$.
This is a restatement of Diekmann et
al~\cite[Theorem~5.4]{odsgslhw}.

Let $n$ denote the highest degree of all polynomials associated with
roots in $\Lambda'$ and
let $\lambda_1,...,\lambda_N$ be the finitely many roots in
$\Lambda'$ which have associated polynomials of this degree and have
$\Im(\lambda_j)=\beta_j\geq0$. We associate with each characteristic
root $\lambda_j=\alpha+i\beta_j$ the matrix polynomials $P_j$ and
$Q_{j}$ in \eqref{eq.solsumrep} above, each of which has degree $n$.
 Therefore we may write 
\begin{equation} \label{eq.PjQjstarfin}
    P_j(t)=t^n P_j^* + O(t^{n-1}), \quad Q_j(t)=t^n Q_j^* + O(t^{n-1}).
\end{equation}
where at least one of $P_j^*$ and $Q_j^*$ are not equal to the zero
matrix, for each $j\in\{1,...,N\}$. The precise values of $P_j^*$
and $Q_j^*$ can be determined from the Laurent series of the inverse
of the characteristic function, $g_\nu$, expanded about $\lambda_j$,
c.f. \cite[pp.31]{odsgslhw} i.e.
\begin{equation}\label{eq:tayfK}
    \left[\lambda I_d - \int_{[-\tau,0]}\e^{\lambda s}\nu(ds)\right]^{-1}
    = \sum_{m=0}^{n}\frac{m! \,K_{j,m}}{(\lambda-\lambda_j)^{m+1}} + \hat{q}_j(\lambda),
\end{equation}
where the remainder term $\hat{q}_j(\lambda)$ is analytic at
$\lambda_j$. If $\lambda_j$ is real then $P_j^*=K_{j,n}$, otherwise
$P^*_j:=2\,\Re(K_{j,n})$ and  $Q^*_j:=-2\,\Im(K_{j,n})$. We note
that \eqref{eq:tayfK} defines the value of $n$.

Finally, we define
\begin{equation}\label{eq:rRsfde}
    R(t)=r(t)-\sum_{j=1}^{N}\e^{\alpha t}\{P_j(t)\cos(\beta_j t) + Q_j(t)\sin(\beta_j t)\}, \quad
    t\geq0.
\end{equation}
It is clear that $R$ is a.e. absolutely continuous. Therefore, by
virtue of the decomposition in \eqref{eq:rR1} in the statement of
Proposition~\ref{prop:Y}, if the growth estimates \eqref{eq:Rorder}
and \eqref{eq:R'order} can be established for $R$ defined by
\eqref{eq:rRsfde}, we will be in a excellent position to apply
Proposition~\ref{prop:Y} to the stochastic convolution term on the
right--hand side of \eqref{eq.stochconfor2}. The relevant estimates
will be provided in Lemma~\ref{lm:Rfin}, which is stated in
Section~\ref{sect:actualresults}.

\section{Main Results} \label{sect:actualresults}
In this section, we state the main results of the paper, which
concern the pathwise and mean--square asymptotic behaviour of the
stochastic Volterra and finite delay equations introduced in the
previous section. In order to do so, the decomposition of the
resolvents and variation of constants formulae established in the
previous section must be aligned with the hypothesis of
Proposition~\ref{prop:Y}. The missing ingredient in each of the
proofs is an asymptotic estimate on the remainder terms defined in
equations \eqref{eq:rRvol} and \eqref{eq:rRsfde}, and once these are
supplied, the main results follow directly. In addition, this
section contains a number of remarks on the scope and ramifications
of these main asymptotic results.

We now state the main results for the Volterra equation and affine
SFDE with finite memory.
\begin{theorem}\label{thm:Vol}
    Let $\alpha^*$ and $\alpha$, as defined by \eqref{eq:tranE} and \eqref{eq.alpha1} respectively, obey $\alpha^*<\alpha$.
    Let $n$ be given by \eqref{eq:tayK} (i.e. $n+1$ denotes the highest order of all roots in $\Lambda''=\Lambda\cap \{\Re(\lambda)=\alpha, \Im(\lambda)\geq0\}$) and
let $(\lambda_j)_{j=1}^N$ be the finitely many roots in $\Lambda''$
with this order. Define $\beta_j=\Im(\lambda_j)$, $j=1,\ldots,N$.
Suppose that $P_j^\ast, Q_j^\ast$ for $j=1,\ldots,N$ are given by
\eqref{eq.PjQjstarvol}.
Let $f\in C([0,\infty);\R^{d})$ be such that
\begin{equation}\label{eq:volf}
    \int_{0}^{\infty}\e^{-\alpha t}|f(t)|\,dt<+\infty
\end{equation}
and let $\Sigma\in C([0,\infty);\mathbb{R}^{d\times d'})$ be such
that
\begin{equation}\label{eq:volSig}
\int_0^\infty e^{-2\alpha t}\|\Sigma(t)\|^2 \,dt <+\infty.
\end{equation}
    Let $X$ be the unique solution of \eqref{eq.stoch}.
    Then
    \begin{multline} \label{eq.Xasyvol}
        \lim_{t\to\infty} \left( \frac{X(t)}{t^n\e^{\alpha t}}
         -\sum_{j=1}^{N}\{(Q_j^* X_0 + M_{1,j})\sin(\beta_j t) + (P_j^* X_0 + M_{2,j})\cos(\beta_j t)\} \right)
         \\ =0, \quad \text{a.s.}
    \end{multline}
where $M_{1,j}$ and $M_{2,j}$ are given by Corollary~\ref{cor:Yf}.
\end{theorem}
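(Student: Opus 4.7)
The plan is to start from the variation of constants formula in Proposition~\ref{prop.stochresrep}, namely
\[
X(t) = r(t)X_0 + \int_0^t r(t-s) f(s)\,ds + \int_0^t r(t-s)\Sigma(s)\,dB(s), \quad t\ge 0,
\]
and to analyse each of the three terms separately after normalising by $t^n e^{\alpha t}$. The second and third terms together form exactly the process $V$ defined in \eqref{eq:VpY} of Corollary~\ref{cor:Yf}, with the resolvent $r$ of \eqref{eq.fund1} playing the role of the function $r$ in that corollary. To justify applying Corollary~\ref{cor:Yf}, I would use the decomposition \eqref{eq:rRvol} of $r$, in which the exponential-polynomial part matches the form \eqref{eq:rR1} with polynomials $P_j, Q_j$ of common maximal degree $n$, and the remainder $R$ is a.e. absolutely continuous; the required growth estimates \eqref{eq:Rorder}, \eqref{eq:R'order} for $R$ will be supplied by the forthcoming Lemma~\ref{lm:Rvol}. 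The integrability hypotheses \eqref{eq:SigL2} and \eqref{eq:fL1} of the corollary are precisely the standing assumptions \eqref{eq:volSig} and \eqref{eq:volf} on $\Sigma$ and $f$, so an application of Corollary~\ref{cor:Yf} gives
\[
\lim_{t\to\infty} \left(\frac{V(t)}{t^n e^{\alpha t}} - \sum_{j=1}^N \bigl\{M_{1,j}\sin(\beta_j t) + M_{2,j}\cos(\beta_j t)\bigr\}\right) = 0, \quad \text{a.s.}
\]

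It remains to handle the deterministic homogeneous term $r(t)X_0$. Using the decomposition \eqref{eq:rRvol} and the leading-order expressions \eqref{eq.PjQjstarvol} for $P_j$ and $Q_j$, one obtains
\[
\frac{r(t)X_0}{t^n e^{\alpha t}} = \sum_{j=1}^N\bigl\{P_j^* X_0 \cos(\beta_j t) + Q_j^* X_0 \sin(\beta_j t)\bigr\} + \frac{R(t)X_0}{t^n e^{\alpha t}} + O(t^{-1}).
\]
The growth estimate \eqref{eq:Rorder} provided by Lemma~\ref{lm:Rvol} ensures that $R(t)/(t^n e^{\alpha t}) \to 0$ as $t\to\infty$ (the case $n\ge 1$ gives a factor $t^{-1}$, and the case $n=0$ an exponentially small factor), so the remainder vanishes and we recover the constant vectors $P_j^* X_0$ and $Q_j^* X_0$ as the cosine and sine coefficients respectively.

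Adding the two contributions, and matching the cosine/sine components at each frequency $\beta_j$, produces exactly the limiting expression in \eqref{eq.Xasyvol}: the cosine coefficient at $\beta_j$ is $P_j^* X_0 + M_{2,j}$ and the sine coefficient is $Q_j^* X_0 + M_{1,j}$. The main obstacle, and really the only nontrivial step beyond bookkeeping, is establishing the sharp growth estimates \eqref{eq:Rorder}--\eqref{eq:R'order} for the remainder $R$ defined in \eqref{eq:rRvol}; this is not self-evident from the exponent splitting alone because one must exploit the gap between $\alpha$ and the next cluster of characteristic roots. This difficulty is deferred to Lemma~\ref{lm:Rvol}, whose proof will rely on the Gelfand-theoretic argument behind \eqref{eq.solsumrepvol} (Theorem~7.2.1 of \cite{GrLoSt90}) together with the submultiplicative weight $\varphi(t)=e^{-(\alpha-\varepsilon)t}$ constructed in Subsection~\ref{sub.Vol} and control of $q$ and $q'$ in $L^1(\Rp;\varphi)$. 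Once that lemma is in place, the theorem follows by the combination described above.
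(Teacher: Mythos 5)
Your proposal is correct and follows essentially the same route as the paper: the variation of constants formula of Proposition~\ref{prop.stochresrep}, an application of Corollary~\ref{cor:Yf} to the deterministic and stochastic convolution terms (with hypotheses \eqref{eq:rR1}, \eqref{eq:Rorder}, \eqref{eq:R'order} verified via the decomposition \eqref{eq:rRvol} and Lemma~\ref{lm:Rvol}), and the treatment of $r(t)X_0$ through the leading-order behaviour of $r$ (which the paper packages as Remark~\ref{rk:det}). The coefficient matching $(Q_j^*X_0+M_{1,j})$ for sine and $(P_j^*X_0+M_{2,j})$ for cosine agrees with \eqref{eq.Xasyvol}, and your deferral of the remainder estimates to Lemma~\ref{lm:Rvol} mirrors the paper exactly.
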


We are now in a position to prove this Theorem. As already pointed
out, in order to do this, estimates are needed on the asymptotic
behaviour of $R$ defined by \eqref{eq:rRvol}. The following result,
whose proof is deferred to Section~\ref{sect:pflemmaRRprasy} can be
established.
\begin{lemma} \label{lm:Rvol}
Let $R$ be defined by \eqref{eq:rRvol}. Suppose that $\alpha^*$ and
$\alpha$, defined by \eqref{eq:tranE} and \eqref{eq.alpha1}
respectively, obey $\alpha^*<\alpha$. Then there exists
$\varepsilon\in(0,\alpha-\alpha^*)$ such that
\begin{itemize}
\item[(i)] If $n=0$, then  $R(t)=O(e^{(\alpha-\varepsilon)t})$ as $t\to\infty$.
\item[(ii)] If $n=0$, then $R'(t)=O(e^{(\alpha-\varepsilon)t})$ as $t\to\infty$.
\item[(iii)] If $n\geq 1$, then $R(t)=O(t^{n-1}e^{\alpha t})$ as $t\to\infty$.
\item[(iv)] If $n\geq 1$, then
    $R'(t) = O(t^n \e^{\alpha t})$, as $t\to\infty$.
\end{itemize}
\end{lemma}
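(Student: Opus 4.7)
The plan is to exploit the spectral expansion \eqref{eq.solsumrepvol} from Theorem~7.2.1 of \cite{GrLoSt90}. First I would fix $\varepsilon>0$ small enough that $\alpha^\ast<\alpha-\varepsilon$ and $h_\mu(\lambda)\neq 0$ on the line $\{\Re(\lambda)=\alpha-\varepsilon\}$; then the expansion supplies a remainder $q$ with $q,q'\in L^1(\Rp;\varphi;\R^{d\times d})$, $\varphi(t)=e^{-(\alpha-\varepsilon)t}$. Setting $\psi(t):=\varphi(t)q(t)$, a short computation shows both $\psi$ and $\psi'$ lie in $L^1(\Rp)$, so $\psi$ is absolutely continuous with $\psi(t)\to 0$; hence $q(t)=o(e^{(\alpha-\varepsilon)t})$.

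After subtracting the degree-$n$ leading terms in \eqref{eq:rRvol}, the remainder $R$ becomes a finite sum of exponential-polynomials coming from those roots in $\Lambda'$ of polynomial degree strictly less than $n$ (each contributing at most $O(t^{n-1}e^{\alpha t})$, and absent when $n=0$) and those in $\Lambda_\varepsilon\setminus\Lambda'$ (with real parts strictly below $\alpha$, contributing $O(e^{(\alpha-\varepsilon')t})$ for a suitable $\varepsilon'>0$), plus $q(t)$. This immediately yields (i) when $n=0$ and (iii) when $n\ge 1$.

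For (iv), the identity $r'(t)=\int_{[0,t]}\mu(ds)\,r(t-s)$ combined with the pointwise bound $|r(u)|\le C(1+u)^ne^{\alpha u}$ supplied by (iii) and the finiteness of $\int_{[0,\infty)}e^{-\alpha s}\abs{\mu}(ds)$ (since $\alpha>\alpha^\ast$) delivers $|r'(t)|=O(t^ne^{\alpha t})$. Subtracting the derivative of the leading exponential-polynomial sum, itself of the same order, proves (iv).

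The main obstacle will be (ii), since the naive estimate above yields only $R'(t)=O(e^{\alpha t})$, one factor of $e^{-\varepsilon t}$ short of the target. Let $E(t):=r(t)-R(t)$ denote the leading sum; rewritten in complex form, $E(t)=\sum_{\lambda_j\in\Lambda'}e^{\lambda_j t}K_j$, where each $K_j$ is the residue of $[\lambda I_d-\tilde\mu(\lambda)]^{-1}$ at the simple pole $\lambda_j$ (simple because $n=0$). Equating the singular parts in the identity $[\lambda I_d-\tilde\mu(\lambda)]\cdot[\lambda I_d-\tilde\mu(\lambda)]^{-1}=I_d$ near $\lambda_j$ forces $(\lambda_j I_d-\tilde\mu(\lambda_j))K_j=0$. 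I would then decompose
\[
 R'(t)=\int_{[0,t]}\mu(ds)\,R(t-s)-\Bigl[E'(t)-\int_{[0,t]}\mu(ds)\,E(t-s)\Bigr].
\]
The first integral is $O(e^{(\alpha-\varepsilon)t})$ by part~(i) together with $\int_{[0,\infty)}e^{-(\alpha-\varepsilon)s}\abs{\mu}(ds)<\infty$. The bracketed term simplifies, via $\lambda_j K_j=\tilde\mu(\lambda_j)K_j$, to $\sum_j e^{\lambda_j t}\int_{(t,\infty)}e^{-\lambda_j s}\mu(ds)\,K_j$, whose tail integrals are $O(e^{-\varepsilon t})$ again because $\alpha-\varepsilon>\alpha^\ast$. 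This cancellation, which reflects the fact that the leading exponentials solve the characteristic equation exactly, is the crux of the argument and the only non-routine step.
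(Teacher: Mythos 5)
Your proposal is correct and follows essentially the same route as the paper's proof: the weighted $L^1$ bound on $q$ and $q'$ to control the remainder, the splitting of the exponential--polynomial sum over $\Lambda_\varepsilon$ for (i) and (iii), the crude convolution estimate $r'(t)=\int_{[0,t]}\mu(ds)\,r(t-s)$ for (iv), and, for (ii), exactly the paper's key cancellation $(\lambda_j I_d-\tilde\mu(\lambda_j))K_{j,0}=0$ (its Lemma on the residues) leaving only the tail integrals $\e^{\lambda_j t}\int_{(t,\infty)}\e^{-\lambda_j s}\mu(ds)K_{j,0}=O(\e^{(\alpha-\varepsilon)t})$. No gaps.
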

Therefore, by Lemma~\ref{lm:Rvol}, the function $R$ defined by
\eqref{eq:rRvol} obeys equations \eqref{eq:Rorder} and
\eqref{eq:R'order}. Also a rearrangement of $r$ given by
\eqref{eq:rRvol} yields the form of \eqref{eq:rR1}. Thus, the proof
of Theorem~\ref{thm:Vol} is an immediate consequence of
Lemma~\ref{lm:Rvol}, Corollary~\ref{cor:Yf} and Remark~\ref{rk:det}.

\begin{remark}
The condition $\alpha^*<\alpha$
is imposed as in order to apply Theorem~7.2.1 of \cite{GrLoSt90}, it
is needed that the Laplace transform of $\mu$ in $h_\mu$ is
well--defined over an open region of the complex plane which
contains the critical line $\Re(\lambda)=\alpha$. Theorem~7.2.1 of
\cite{GrLoSt90} then allows one to conclude the asymptotic behaviour
of the deterministic resolvent, \eqref{eq.solsumrepvol}. This
condition is also required in determining the asymptotic behaviour
of the remainder term $R$ of \eqref{eq:rRvol}.

In the case that $\alpha^*=\alpha$ (i.e. the line on which lie the
zeros of $h$ with largest real part co--incides with the
boundary of the region of existence of the Laplace transform of
$|\mu|$), then the deterministic theory differs to that as described
by Theorem~7.2.1 of \cite{GrLoSt90}. The asymptotic behaviour in
this case is examined in great depth in Jordan et
al.~\cite{jsw:1982}, Kriszten and Terj\'eki~\cite{krister:1988} and
Miller~\cite{miller:1974}. In particular, in order to apply
successfully our stochastic admissibility results, we need good
asymptotic information about both the resolvent and its derivative.
For the cases covered here, existing deterministic results for the
resolvent suffice, but new work has been required, and is supplied,
for the derivative. Thus, in this case the stochastic theory as
described by Theorem~\ref{thm:Vol} would not necessarily hold.

Some articles which examine the case when the line containing the
{\it leading} characteristic exponents of the characteristic
equation co--incides with the boundary of the domain of the
transform of the measure are e.g. \cite[Chapter~7.3]{GrLoSt90},
\cite{krister:1988} for deterministic theory and \cite{jasddr:2006},
\cite{jasddr:2007}, for stochastic theory.
\end{remark}

The corresponding result for the affine SFDE is as follows.
\begin{theorem}\label{thm:finite}
    Let $\alpha$ be as defined by \eqref{eq.alpha2}.
    Let $n$ be given by \eqref{eq:tayK} (i.e. $n+1$ denotes the highest order of all roots in $\Lambda''=\Lambda\cap \{\Re(\lambda)=\alpha, \Im(\lambda)\geq0\}$) and
let $(\lambda_j)_{j=1}^N$ be the finitely many roots in $\Lambda''$
with this order.  Define $\beta_j=\Im(\lambda_j)$, $j=1,\ldots,N$.
Suppose that $P_j^\ast, Q_j^\ast$ for $j=1,\ldots,N$ are given by
\eqref{eq.PjQjstarfin}. Let $f\in C([0,\infty);\R^{d})$ be such that
\[
    \int_{0}^{\infty}\e^{-\alpha t}|f(t)|\,dt<+\infty
\]
and let $\Sigma\in C([0,\infty);\mathbb{R}^{d\times d'})$ be such
that
\begin{equation}\label{eq:SigL2fin}
    \int_0^\infty e^{-2\alpha t}\|\Sigma(t)\|^2\,dt <+\infty.
\end{equation}
    Let $X$ be the unique solution of \eqref{eq.stoch2}.
    Then
    \begin{equation} \label{eq.Xasyfin}
        \lim_{t\to\infty} \left( \frac{X(t)}{t^n\e^{\alpha t}}
         -\sum_{j=1}^{N}\{J_{1,j}\sin(\beta_j t) + J_{2,j}\cos(\beta_j t)\} \right) =0, \quad
         \text{a.s.}
    \end{equation}
    where
    \begin{align*}
        J_{1,j} &= Q_j^* \phi(0)+ G_{1,j} + M_{1,j}, \quad  J_{2,j} = P_j^* \phi(0)+ G_{2,j} + M_{2,j}, \\
        G_{1,j} &= \int_{-\tau}^{0}\int_{[-\tau,u]}\e^{\alpha u}\nu(ds)\{Q_j^* \cos(\beta_j u) - P_j^* \sin(\beta_j u) \} \phi(s-u) du, \\
        G_{2,j} &= \int_{-\tau}^{0}\int_{[-\tau,u]}\e^{\alpha u}\nu(ds)\{P_j^* \cos(\beta_j u) + Q_j^* \sin(\beta_j u)\} \phi(s-u) du,
    \end{align*}
    and
    where $M_{1,j}$ and $M_{2,j}$ are given by Corollary~\ref{cor:Yf}.
\end{theorem}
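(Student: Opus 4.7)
The plan is to parse \eqref{eq.stochconfor2} into three pieces and to identify the limit of each separately. Write
\[
V(t) = \int_{0}^{t}r(t-s)f(s)\,ds + \int_{0}^{t}r(t-s)\Sigma(s)\,dB(s).
\]
Lemma~\ref{lm:Rfin} guarantees that the decomposition \eqref{eq:rRsfde} puts $r$ into the form \eqref{eq:rR1}, with remainder $R$ obeying \eqref{eq:Rorder} and \eqref{eq:R'order}. Under the hypotheses \eqref{eq:SigL2fin} and $\int_{0}^{\infty}\e^{-\alpha t}|f(t)|\,dt<\infty$, Corollary~\ref{cor:Yf} therefore supplies
\[
\lim_{t\to\infty}\left(\frac{V(t)}{t^n\e^{\alpha t}} - \sum_{j=1}^{N}\{M_{1,j}\sin(\beta_j t) + M_{2,j}\cos(\beta_j t)\}\right) = 0 \quad \text{a.s.},
\]
which accounts for the $M_{1,j},M_{2,j}$ parts of $J_{1,j},J_{2,j}$ and disposes of both the stochastic convolution and the deterministic forcing integral in \eqref{eq.stochconfor2}.

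It then remains to analyse $x(t) = r(t)\phi(0) + \int_{-\tau}^{0}\int_{[-\tau,u]}\nu(ds)\,r(t+s-u)\phi(u)\,du$. For $r(t)\phi(0)$ I would substitute \eqref{eq:rRsfde} directly: Lemma~\ref{lm:Rfin} bounds $R(t)\phi(0)$ by a quantity of order $t^{n-1}\e^{\alpha t}$ (strictly smaller exponential order when $n=0$), so after normalising by $t^n\e^{\alpha t}$ only the principal sum survives, producing $\sum_{j=1}^{N}\{P_j^\ast\phi(0)\cos(\beta_j t) + Q_j^\ast\phi(0)\sin(\beta_j t)\}$, which yields the $P_j^\ast\phi(0)$ and $Q_j^\ast\phi(0)$ pieces of $J_{2,j}$ and $J_{1,j}$. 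For the double integral, substitute \eqref{eq:rRsfde} evaluated at $t+s-u$: since $(s,u)$ lies in a bounded region, $|s-u|\le\tau$ and hence $(t+s-u)^{n} = t^{n} + O(t^{n-1})$ and $R(t+s-u) = O(t^{n-1}\e^{\alpha t})$ uniformly in $(s,u)$. Combined with the finite total variation of $\nu$ and the continuity of $\phi$ on the compact interval $[-\tau,0]$, the non-leading contributions vanish after division by $t^{n}\e^{\alpha t}$.

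After extracting $t^n\e^{\alpha t}$, the leading part of the double integral is
\[
\sum_{j=1}^{N}\int_{-\tau}^{0}\int_{[-\tau,u]}\e^{\alpha(s-u)}\nu(ds)\bigl\{P_j^{\ast}\cos(\beta_j(t+s-u)) + Q_j^{\ast}\sin(\beta_j(t+s-u))\bigr\}\phi(u)\,du.
\]
The angle-addition identities split each bracket into a $\cos(\beta_j t)$-part with coefficient $P_j^{\ast}\cos(\beta_j(s-u)) + Q_j^{\ast}\sin(\beta_j(s-u))$ and a $\sin(\beta_j t)$-part with coefficient $Q_j^{\ast}\cos(\beta_j(s-u)) - P_j^{\ast}\sin(\beta_j(s-u))$, each multiplied by $\e^{\alpha(s-u)}\phi(u)\nu(ds)\,du$. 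A Fubini swap of the order of integration, the substitution $w = s - u$ in the inner integral, and a second swap followed by the relabelling $w \mapsto u$ transform these two coefficient integrals into precisely $G_{2,j}$ and $G_{1,j}$ as stated. Summing the three contributions delivers \eqref{eq.Xasyfin} with $J_{1,j} = Q_j^{\ast}\phi(0) + G_{1,j} + M_{1,j}$ and $J_{2,j} = P_j^{\ast}\phi(0) + G_{2,j} + M_{2,j}$. I expect the main obstacle to lie in this last paragraph: verifying the uniform polynomial--exponential control of $r(t+s-u)$ as $(s,u)$ ranges over the triangular region, and performing the two-step Fubini and change-of-variables bookkeeping that converts the natural form of the limit into the symmetric $G_{1,j}, G_{2,j}$ form given in the statement. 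Once that identification is carried out, the remainder of the argument is a routine combination of Lemma~\ref{lm:Rfin}, Corollary~\ref{cor:Yf}, and the variation-of-constants identity \eqref{eq.stochconfor2}.
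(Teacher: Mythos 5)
Your proposal is correct and follows essentially the same route as the paper, which simply declares the theorem an immediate consequence of Lemma~\ref{lm:Rfin}, Corollary~\ref{cor:Yf} and Remark~\ref{rk:det} applied to the variation--of--constants formula \eqref{eq.stochconfor2}. The only difference is that you explicitly carry out the Fubini swap and change of variables identifying the initial--segment contribution with $G_{1,j}$ and $G_{2,j}$ — a bookkeeping step the paper leaves to the reader — and your computation of those coefficients checks out.
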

As in the case of Theorem~\ref{thm:Vol}, we are now ready to prove
this Theorem. As indicated earlier, we can do this once appropriate
estimates are available for the asymptotic behaviour of $R$ defined
by \eqref{eq:rRsfde}. These estimates are supplied in the following
result, whose proof is deferred to
Section~\ref{sect:pflemmaRRprasy}.
\begin{lemma} \label{lm:Rfin}
Let $R$ be defined by \eqref{eq:rRsfde}. Suppose that $\alpha$ is as
defined by \eqref{eq.alpha2}. Then there exists $\varepsilon>0$ such
that
\begin{itemize}
\item[(i)] If $n=0$, then $R(t)=O(e^{(\alpha-\varepsilon)t})$ as $t\to\infty$.
\item[(ii)] If $n=0$, then $R'(t)=O(e^{(\alpha-\varepsilon)t})$ as $t\to\infty$.
\item[(iii)] If $n\geq 1$, then $R(t)=O(t^{n-1}e^{\alpha t})$ as $t\to\infty$.
\item[(iv)] If $n\geq 1$, then
    $R'(t) = O(t^n \e^{\alpha t})$, as $t\to\infty$.
\end{itemize}
\end{lemma}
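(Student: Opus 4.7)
The plan is to use the asymptotic expansion~\eqref{eq.solsumrep} supplied by Diekmann et al.~\cite[Theorem~5.4]{odsgslhw} to split $R$ into a polynomial-exponential part of controllable degree plus a small remainder. Let
\[
S_{\text{tot}}(t):=\sum_{\lambda_j\in \Lambda',\,\Im(\lambda_j)\geq 0}\e^{\alpha t}\bigl(\tilde P_j(t)\cos(\beta_j t)+\tilde Q_j(t)\sin(\beta_j t)\bigr)
\]
denote the full sum on the right-hand side of \eqref{eq.solsumrep}, with $\tilde P_j, \tilde Q_j$ the polynomials of degree $n_j$ generated by the Laurent expansion about $\lambda_j$. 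Writing
\[
R(t)=\underbrace{\bigl(r(t)-S_{\text{tot}}(t)\bigr)}_{=:\rho(t)}+\underbrace{\Bigl(S_{\text{tot}}(t)-\sum_{j=1}^N\e^{\alpha t}\bigl(P_j(t)\cos(\beta_j t)+Q_j(t)\sin(\beta_j t)\bigr)\Bigr)}_{=:S(t)},
\]
formula \eqref{eq.solsumrep} gives $\rho(t)=o(\e^{(\alpha-\varepsilon)t})$ as $t\to\infty$, while $S$ retains only contributions from $\lambda_j\in\Lambda'\setminus\{\lambda_1,\ldots,\lambda_N\}$, each of which has ascent at most $n$, so every polynomial in $S$ has degree at most $n-1$. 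Hence $S\equiv 0$ when $n=0$, and $S(t)=O(t^{n-1}\e^{\alpha t})$ when $n\geq 1$; combining these two estimates yields (i) and (iii) at once.

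For the derivative bounds, I would exploit the fact that $S_{\text{tot}}$ is itself a classical solution of the homogeneous delay equation $y'(t)=\int_{[-\tau,0]}\nu(ds)\,y(t+s)$ on all of $\R$, since it is a finite sum of generalized eigensolutions $t^m\e^{\lambda_j t}K_{j,m}$ produced by the Laurent expansion~\eqref{eq:tayfK}. As $r$ solves the same equation for $t\geq \tau$, so does the difference $\rho=r-S_{\text{tot}}$, and therefore
\[
\abs{\rho'(t)}\leq \abs{\nu}\!([-\tau,0])\sup_{s\in[-\tau,0]}\abs{\rho(t+s)}=o(\e^{(\alpha-\varepsilon)t})\qquad\text{as }t\to\infty.
\]
On the other hand $S'$ is the derivative of a polynomial of degree at most $n-1$ multiplied by $\e^{\alpha t}$ and trigonometric factors, which preserves the degree, so $S'\equiv 0$ for $n=0$ and $S'(t)=O(t^{n-1}\e^{\alpha t})$ for $n\geq 1$. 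As $R'=\rho'+S'$, conclusions (ii) and (iv) follow immediately; in fact (iv) holds with the sharper bound $O(t^{n-1}\e^{\alpha t})$, which trivially implies the stated $O(t^n\e^{\alpha t})$.

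The main obstacle is the structural claim that $S_{\text{tot}}$ solves the homogeneous equation. Concretely, one needs to verify that the chain of matrices $(K_{j,m})_{m=0}^{n_j}$ defined in \eqref{eq:tayfK} satisfies the algebraic recurrences obtained by expanding the identity
\[
\Bigl(\lambda I_d-\int_{[-\tau,0]}\e^{\lambda s}\nu(ds)\Bigr)\Bigl(\lambda I_d-\int_{[-\tau,0]}\e^{\lambda s}\nu(ds)\Bigr)^{-1}=I_d
\]
in powers of $\lambda-\lambda_j$; these recurrences are precisely what is needed for $\sum_{m=0}^{n_j}K_{j,m}t^m\e^{\lambda_j t}$ to be a classical solution of the homogeneous equation, and passing to real and imaginary parts gives the real representation $\e^{\alpha t}(\tilde P_j\cos(\beta_j\cdot)+\tilde Q_j\sin(\beta_j\cdot))$. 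This Jordan-chain computation is standard for autonomous linear functional differential equations (cf.\ \cite{odsgslhw}); everything else reduces to routine bookkeeping of polynomial degrees against the finite total variation of~$\nu$.
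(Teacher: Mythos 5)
Your proposal is correct, and for parts (i) and (iii) it coincides with the paper's argument: both read the bounds on $R$ directly off the expansion \eqref{eq.solsumrep}, separating the roots of $\Lambda'$ whose polynomials have the maximal degree $n$ (which cancel exactly) from those of degree at most $n-1$ and the $o(\e^{(\alpha-\varepsilon)t})$ tail. For the derivative estimates (ii) and (iv) you take a genuinely different and arguably cleaner route. The paper treats the two cases separately: for $n\geq 1$ it simply writes $R'=r'-S'=\int_{[-\tau,0]}\nu(ds)\,r(t+s)-S'(t)$ and bounds each term crudely by $O(t^n\e^{\alpha t})$, while for $n=0$ it converts the delay equation into a Volterra equation via $\nu_+(E)=\nu(-E)$ and reruns the Volterra argument, whose key ingredient is the analogue of Lemma~\ref{lm:Keig}, i.e.\ the single relation $\bigl(\lambda_j I_d-\int\e^{\lambda_j s}\nu(ds)\bigr)K_{j,0}=0$ showing that $\e^{\lambda_j t}K_{j,0}$ is an exact solution. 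You instead invoke the full Jordan-chain version of that fact — that the entire exponential-polynomial sum $S_{\text{tot}}$ is an exact solution of the homogeneous equation on $\R$ — so that $\rho=r-S_{\text{tot}}$ satisfies the homogeneous equation for $t\geq\tau$ and $\rho'$ is controlled pointwise by $\abs{\nu}([-\tau,0])$ times the recent values of $\rho$. This unifies (ii) and (iv) in one computation and in fact yields the sharper bound $R'(t)=O(t^{n-1}\e^{\alpha t})$ in case (iv), strictly improving the paper's $O(t^n\e^{\alpha t})$. The price is that you must verify the chain relations for all the matrices $(K_{j,m})_{m=0}^{n_j}$ of \eqref{eq:tayfK}, not just $K_{j,0}$ as in Lemma~\ref{lm:Keig}; you correctly identify this as the one substantive verification and your sketch (expanding $\Delta(\lambda)\Delta(\lambda)^{-1}=I_d$ about $\lambda_j$ and matching principal parts) is the standard and correct way to do it, so I regard the argument as complete in outline rather than gapped.
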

Observe that Lemma~\ref{lm:Rfin} states that $R$ defined by
\eqref{eq:rRsfde} obeys equations \eqref{eq:Rorder} and
\eqref{eq:R'order}. Also a rearrangement of $r$ given by
\eqref{eq:rRsfde} yields the form of $r$ in \eqref{eq:rR1}. Thus,
the proof of Theorem~\ref{thm:finite} is an immediate consequence of
Lemma~\ref{lm:Rfin}, Corollary~\ref{cor:Yf} and Remark~\ref{rk:det}.

\begin{remark}
    Theorem~\ref{thm:finite} differs from Theorem~\ref{thm:Vol} with respect to the region of existence of the characteristic equation $g_{\nu}$, i.e. $\int_{[-\tau,0]}\e^{a s}|\nu|(ds)$ exists for all $a\in(-\infty,\infty)$ and thus the condition $\alpha^*<\alpha$, present in Theorem~\ref{thm:Vol}, has no analogue in Theorem~\ref{thm:finite}.
\end{remark}
\begin{remark}
While Theorems~\ref{thm:Vol} and \ref{thm:finite} give a rate of
growth or decay in an almost sure sense, it is observed, via
Theorem~\ref{thm.ascharacterise2}, that this convergence also holds
in mean square. That is, for the solution of the Volterra equation
\eqref{eq.stoch}, with the assumptions of Theorem~\ref{thm:Vol},
\begin{align*}
\lim_{t\to\infty}\mathbb{E}\left[\norm{\frac{X(t)}{t^n\e^{\alpha t}}
         -\sum_{j=1}^{N}\{(Q_j^* X_0 + M_{1,j})\sin(\beta_j t) + (P_j^* X_0 + M_{2,j})\cos(\beta_j t)\}}^2 \right] =0.
\end{align*}
Also, for the solution of the finite delay equation
\eqref{eq.stoch2}, with the assumptions of Theorem~\ref{thm:finite},
\[
\lim_{t\to\infty}\mathbb{E}\left[\norm{\frac{X(t)}{t^n\e^{\alpha t}}
         -\sum_{j=1}^{N}\{J_{1,j}\sin(\beta_j t) + J_{2,j}\cos(\beta_j t)\}}^2 \right] =0.
\]
\end{remark}
\begin{remark}\label{rk:det}
    The asymptotic behaviour of the deterministic functional differential equations \eqref{eq.fund1} or
    \eqref{eq.fund22}, each of which obey 
    \begin{equation}\label{eq:asyr}
        \lim_{t\to\infty} \left( \frac{r(t)}{t^n\e^{\alpha t}}
         -\sum_{j=1}^{N}\{Q_j^*\sin(\beta_j t) + P_j^*\cos(\beta_j t)\} \right) =0,
    \end{equation}
    where $P_j^\ast$ and $Q_j^\ast$ are determined by \eqref{eq.PjQjstarvol} (in the case of the Volterra equation)
    and \eqref{eq.PjQjstarfin} (for the equation with finite delay) is analogous to the asymptotic behaviour of $X$ as given by \eqref{eq.Xasyvol} and \eqref{eq.Xasyfin} respectively.

  It can therefore be seen, despite the presence of the stochastic integral, that $X$ inherits the asymptotic behaviour of $r$, provided
  that the intensity of the noise perturbation does not grow too rapidly.

    Regarding the multipliers of the trigonometric terms we remark that $M_{1,j}$ and $M_{2,j}$ are Gaussian distributed random
    variables and hence their values and, in particular, sign will depend upon the sample path. Moreover these random variables
    depend on the coefficients of the trigonometric terms in \eqref{eq:asyr} i.e. $P_j^*$ and $Q_j^*$.
\end{remark}

\begin{remark}\label{rk:fsignec}
    The conditions \eqref{eq:SigL2} and \eqref{eq:fL1} on the growth of $\Sigma$ and $f$ are, in some sense, unimprovable if the asymptotic behaviour of $X$ is to be recovered.

Consider, for example, the scalar ordinary affine stochastic
equation
\[
    dX(t)= \bigl( \alpha X(t) + f(t)\bigr)\,dt + \Sigma(t)\,dB(t), \quad t\geq0, \quad X(0)=X_0\in\R,
\]
where $\alpha\in\R$, $\Sigma\in C([0,\infty);\R)$ and $f$ is a
non--negative function, i.e. $f\in C([0,\infty);[0,\infty))$. Then
we have the following equivalent conditions:
\begin{itemize}
\item[(i)] \eqref{eq:SigL2} and \eqref{eq:fL1} hold.
\item[(ii)] There exists an a.s. finite random variable $L$ such that
\begin{equation}\label{eq:Mconv}
    \mathbb{P}\left[ \lim_{t\to\infty} \e^{-\alpha t}X(t) = L\in(-\infty,\infty) \right]>0.
\end{equation}
\item[(iii)] There exists an a.s. finite random variable $L$ such that
\begin{equation}\label{eq:Mconv2}
\lim_{t\to\infty} \e^{-\alpha t}X(t) = L, \quad \text{a.s.}
\end{equation}
\end{itemize}
The proof of Remark~\ref{rk:fsignec} is deferred to
Section~\ref{sect:remarkpf}.
\end{remark}

\begin{remark}
    The asymptotic behaviour of the solution of \eqref{eq.stoch2} in the case when $\alpha<0$ and the diffusion coefficient is time independent, i.e. $\Sigma(t) = \Sigma\in\mathbb{R}^{d\times d'}$ for all $t\geq0$, is considered in \cite{appmaowu}.
    It is argued that asset prices in financial markets fluctuate and therefore it is of interest to describe the order of the oscillations
    about the mean in particular the rate of growth of the running maximum of this asset price.
    In this case the resolvent function decays exponentially to zero resulting in the process $X$
    behaving asymptotically like a Gaussian process. Specifically, it is shown that
    \[
        \limsup_{t\to\infty}\frac{|X(t)|_{\infty}}{\sqrt{2\log t}}
        = \max_{i=1,...,d} \sqrt{\sum_{k=1}^{m}\biggr( r(s)\Sigma \biggr)_{i,k}^2 ds}, \quad \text{a.s.}
    \]
    However for constant coefficient of diffusion, condition \eqref{eq:SigL2fin} is violated
    and hence Theorem~\ref{thm:finite} does not apply.
\end{remark}
\begin{remark}
    The asymptotic behaviour of the solution of the scalar equation \eqref{eq.stoch2}, with $d=1$, is considered
 in \cite{jamrcs10} with $\alpha\geq0$, the zero of $g$ which has
this real part is a simple real zero and all other zeros of $g$ have
real parts less than $\alpha$. Thus
\cite[Theorem~3.1~(b)]{jamrcs10}, which considers the case of
$\alpha>0$, is a special case of Theorem~\ref{thm:finite}. Moreover,
as in practice it is quite difficult to determine the zeroes of $g$
a subclass of measures is looked at which give the desired
properties on the zeroes of $g$. Also, the economic interpretations
of these impositions are discussed.
    To summarise the results: it is shown that if $\alpha=0$ then the market behaves similar to a Black-Scholes model,
    in particular $X$ undergoes fluctuations according to the law of the iterated logarithm.
    \[
        \limsup_{t\to\infty}\frac{X(t)}{\sqrt{2t\log\log t}} = -\liminf_{t\to\infty}\frac{X(t)}{\sqrt{2t\log\log t}} = C_1,
    \]
    where $C_1$ is a positive constant. On the other hand, the case $\alpha>0$ gives
    \[
        \lim_{t\to\infty} \e^{-\alpha t}X(t) = C_2,
    \]
    where $C_2$ is a random variable. This regime is interpreted as the market undergoing a bubble or crash,
    depending upon the sign of $C_2$, with both events being possible.

    However the case $\alpha=0$ studied in \cite{jamrcs10} also has a constant diffusion coefficient, thus \eqref{eq:SigL2fin} is not satisfied and so Theorem~\ref{thm:finite} does not apply.
\end{remark}

\section{Examples}\label{sect:examples}
We give some illustrative examples of Theorems~\ref{thm:Vol} and
~\ref{thm:finite} and Proposition~\ref{prop:Y}.
The first three examples consider the situation where the resolvent
is of the especially simple form
\[
\mu(ds)=A\delta_{0}(ds),
\]
where $A$ is a $d\times d$ matrix with real entries. In this case,
the resolvent is nothing other than the principal matrix solution
\[
r'(t)=Ar(t), \quad r(0)=I_d
\]
and the stochastic equation is just the affine stochastic
differential equation
\[
dX(t)=AX(t)\,dt+\Sigma(t)\,dB(t), \quad t\geq 0; \quad X(0)=\xi.
\]
Since there are no more than $d$ eigenvalues, the resolvent $r$ and
its derivative can be expressed as finite sums, and so there is no
need for a detailed analysis of remainder terms.

Our first example looks at the case when the leading eigenvalue (or
zero of the characteristic equation) has algebraic multiplicity
equal to the geometric multiplicity.
\begin{example}
Suppose that $A=\gamma I$ where $I$ is the $2\times 2$ identity
matrix. Then $Y(t)=\e^{-\gamma t}X(t)$ obeys $dY(t)=\e^{-\gamma t}
\Sigma(t)\,dB(t)$, so
\[
Y(t)=\xi + \int_0^t  \e^{-\gamma s}\Sigma(s)\,dB(s), \quad t\geq 0.
\]
In this case, applying our results to $Y$, we have $\alpha=0$. If
$s\mapsto \e^{-\gamma s}\Sigma(s)\in L^2(0,\infty)$, by the
martingale convergence theorem we have
\[
\lim_{t\to\infty} \frac{X(t)}{\e^{\gamma t}} =\lim_{t\to\infty}
Y(t)=\xi + \int_0^\infty   \e^{-\gamma s}\Sigma(s)\,dB(s), \quad
\text{a.s.}
\]
Let $\lambda_j=0$. Since $A-\gamma I=0$, we see that, with $n=0$,
$K_{j,0}=I$ and $\hat{q}_j(\lambda)=0$, we have
\[
\left(\lambda I -(A-\gamma I)\right)^{-1}=\lambda^{-1} I =
\sum_{m=0}^n \frac{m!\, K_{j,m}}{\lambda^{m+1}}+\hat{q}_j(\lambda).
\]
Thus, we may set $P_j^\ast=I$, and therefore the limit for $Y$ has
the form predicted by Theorem~\ref{thm:finite} with $\alpha=0$.
\end{example}

We now demonstrate the resulting asymptotic behaviour of the
solution of the stochastic equation when the leading eigenvalue has
geometric multiplicity less than the algebraic multiplicity.
\begin{example}
Suppose that
\[
A=\left(
\begin{array}{cc}
\gamma & 1 \\
0 & \gamma
\end{array}
\right).
\]
Consider $Y(t)=\e^{-\gamma t}X(t)$. Then
\[
dY(t)=(A-\gamma I)Y(t)\,dt + \e^{-\gamma t}\Sigma(t)\,dB(t).
\]
Then, applying our theory to $Y$, we find that $\alpha=0$, because
$\lambda=0$ is an eigenvalue of multiplicity 2. In this case $r$ is
given by
\[
r(t)=\left(
\begin{array}{cc}
1 & t \\
0 & 1
\end{array}
\right).
\]
Since $\det(r(t))=1$ for all $t\geq 0$, $r(t)$ is invertible, and we
may write $r(t-s)=r(t)r^{-1}(s)=r(t)r(-s)$ for all $0\leq s\leq t$.
Therefore
\[
Y(t)=r(t)\xi + \int_0^t r(t-s)\e^{-\gamma s}\Sigma(s)\,dB(s)=r(t)\xi
+ r(t)\int_0^t r(-s)\e^{-\gamma s}\Sigma(s)\,dB(s).
\]
Notice that $r(t)=I_{d}+t(A-\gamma I)$ and $(A-\gamma
I)r(-s)=A-\gamma I$. Then
\begin{align*}
\lefteqn{\frac{r(t)}{t}\int_0^t r(-s) \e^{-\gamma s}\Sigma(s)\,dB(s)}\\
 &= \frac{1}{t}\int_0^t r(-s)\Sigma(s)\,dB(s)+\int_0^t (A-\gamma I)r(-s) \e^{-\gamma s}\Sigma(s)\,dB(s)\\
  &= \frac{1}{t}\int_0^t r(-s) \e^{-\gamma s}\Sigma(s)\,dB(s)+(A-\gamma I)\int_0^t \e^{-\gamma s}\Sigma(s)\,dB(s).
\end{align*}
Using Lemma~\ref{lemma.tminkskfto0}, the first term has zero limit
as $s\mapsto \e^{-\gamma s}\Sigma(s)$ is in $L^2(0,\infty)$, and
$r(-s)/s\to -(A-\gamma I)$ as $s\to\infty$. The second term
converges by the martingale convergence theorem. Thus
\[
\lim_{t\to\infty} \frac{X(t)}{t\e^{\gamma t}}=(A-\gamma I)\xi +
(A-\gamma I)\int_0^\infty \e^{-\gamma s}\Sigma(s)\,dB(s).
\]
This is exactly the form of the limit predicted in
Theorem~\ref{thm:finite}, because for $\lambda_j=0$ with $n=1$, we
have
\[
P_j^\ast= K_{j,1}=\lim_{\lambda \to 0} \lambda^2 \left(\lambda
I_d-(A-\gamma I)\right)^{-1}=A-\gamma I.
\]
\end{example}

This next example demonstrates the case when the leading eigenvalues
are complex solutions of the characteristic equation.
\begin{example}
Suppose that
\[
A=\left(
\begin{array}{cc}
\gamma  & -1 \\
1 & \gamma
\end{array}
\right).
\]
Suppose that $Y(t)=\e^{-\gamma t}X(t)$. If $J=A-\gamma I$, then
\[
dY(t)=JY(t)\,dt + \e^{-\gamma t}\Sigma(t)\,dB(t).
\]
For the equation solved by $Y$, we have $\alpha=0$, because
$\lambda=\pm i$ are eigenvalues of multiplicity 1. In this case $r$
is given by
\[
r(t)=\left(
\begin{array}{cc}
\cos(t) & -\sin(t) \\
\sin(t) & \cos(t)
\end{array}
\right).
\]
Since $\det(r(t))=1$ for all $t\geq 0$, $r(t)$ is invertible, and we
may write $r(t-s)=r(t)r^{-1}(s)=r(t)r(-s)$ for all $0\leq s\leq t$.
Therefore
\[
X(t)=r(t)\xi + \int_0^t r(t-s)\e^{-\gamma s}\Sigma(s)\,dB(s)=r(t)\xi
+ r(t)\int_0^t r(-s)\e^{-\gamma s}\Sigma(s)\,dB(s).
\]
Since $r(-s)$ is bounded, and $s\mapsto e^{-\gamma s}\Sigma(s)\in
L^2(0,\infty)$, it follows that
\[
\lim_{t\to\infty} \int_0^t r(-s)\e^{-\gamma s}\Sigma(s)\,dB(s) =
\int_0^\infty r(-s)\e^{-\gamma s}\Sigma(s)\,dB(s), \quad \text{a.s.}
\]
Therefore
\[
\lim_{t\to\infty}\left\{Y(t)-r(t)\left(\xi + \int_0^\infty
r(-s)\Sigma(s)\,dB(s)\right)\right\}=0,
 \quad \text{a.s.}
\]
We now see that $r(t)=\cos(t)I + \sin(t)J$, and so the following
limit holds almost surely:
\begin{multline*}
\lim_{t\to\infty}\left\{\frac{X(t)}{\e^{\gamma
t}}-\left(\cos(t)I+\sin(t)J\right)\left(\xi + \int_0^\infty
(\cos(s)I -\sin(s)J)\e^{-\gamma
s}\Sigma(s)\,dB(s)\right)\right\}\\=0.
\end{multline*}
Setting
\[
G_c=\int_0^\infty \cos(s)\e^{-\gamma s}\Sigma(s)\,dB(s), \quad
G_s=\int_0^\infty \sin(s)\e^{-\gamma s}\Sigma(s)\,dB(s)
\]
and noting that $J^2=-I$, we get
\begin{equation*}
\lim_{t\to\infty}\Biggl\{\frac{X(t)}{\e^{\gamma
t}}-\cos(t)\left(\xi+G_c-JG_s \right) - \sin(t)\left(J\xi+JG_c+
G_s\right) \Biggr\} =0,  \quad \text{a.s.}
\end{equation*}
To show that this asymptotic expansion agrees exactly with formula
\eqref{eq.Xasyfin} derived in Theorem~\ref{thm:finite} we notice for
$\lambda_j=(-1)^{j-1} i$ for $j=1,2$ where each of which has
multiplicity $n+1=1$, that
\[
K_{j,0}=\lim_{\lambda\to \lambda_j} (\lambda-\lambda_j)\left(\lambda
I- J\right)^{-1} =\lim_{\lambda\to\lambda_j}
(\lambda-\lambda_j)\frac{1}{1+\lambda^2} \left(
\begin{array}{cc}
\lambda & -1 \\
1 & \lambda
\end{array}
\right).
\]
Since
$(\lambda-\lambda_j)(\lambda-\overline{\lambda}_j)=1+\lambda^2$, we
have
\[
K_{j,0}= \frac{1}{\lambda_j-\overline{\lambda}_j} \left(
\begin{array}{cc}
\lambda_j & -1 \\
1 & \lambda_j
\end{array}
\right) = \frac{1}{2\lambda_j} \left(
\begin{array}{cc}
\lambda_j & -1 \\
1 & \lambda_j
\end{array}
\right) = \frac{1}{2} \left(
\begin{array}{cc}
1 & \lambda_j \\
-\lambda_j & 1
\end{array}
\right)
\]
Hence $2K_{1,0}=I-iJ$ and $2K_{2,0}=I+iJ$. Therefore $P_1^\ast=I$
and $Q_1^\ast=J$.
\end{example}
We provide an example of a convolution Volterra
integro--differential equation where the zeros of the characteristic
equation do not lie in the domain of the transform of the measure,
i.e. $\alpha^\ast>\alpha$. Nevertheless an explicit formula for the
resolvent may obtained and hence one may deduce the asymptotic
behaviour of the solution of the stochastic equation.
\begin{example}
Let $X$ be the unique solution of
\[
    dX(t) = \int_{[0,t]}\mu(ds)X(t-s) dt + \Sigma(t) dB(t), \quad t\geq0
\]
where $X(0)=X_0\in\R^d$ and $\mu(ds) = -6\,\delta_0(ds)I_d -
4\,\e^{-s} \,dsI_d$. Hence $\alpha^\ast=-1$ and $h$ is given by
\[
    h(\lambda) = \det\left(\lambda I_d - \int_{[0,\infty)}\mu(ds)\e^{-\lambda s}I_d \right)
    = \frac{(\lambda+2)^d (\lambda+5)^d}{(\lambda+1)^{d}}.
\]
Thus $\alpha^\ast=-1>-2=\alpha$ and so we cannot apply
Theorem~\ref{thm:Vol} to this problem.

Nevertheless, the differential resolvent, \eqref{eq.fund1}, may
rewritten as the solution of a second order equation and solved to
give
\[
    r(t) = -\frac{1}{3}\e^{-2t} I_d + \frac{4}{3}\e^{-5t} I_d.
\]
Therefore $n=0$ and $P_1^*=-1/3$ and one can now apply
Proposition~\ref{prop:Y} to determine the asymptotic behaviour of
$X$, i.e.
\[
    \lim_{t\to\infty}\frac{X(t)}{\e^{-2t}} = -\frac{1}{3}X_0 -\frac{1}{3}\int_{0}^{\infty}\e^{2s}\Sigma(s)dB(s).
\]
Thus, in instances where Theorem~\ref{thm:Vol} does not apply,
providing that the asymptotic behaviour of $r$ may be estimated to
agree with \eqref{eq:rR1}, then via Proposition~\ref{prop:Y} the
asymptotic behaviour of the solution of the stochastic equation can
still be recovered.
\end{example}

We finish with an example where the underlying deterministic
functional differential equation is not equivalent to a linear
ordinary differential equation, but for which it is possible, owing
to the special structure of the equation, to determine exactly the
leading order asymptotic behaviour.
\begin{example}
Suppose that $X$ obeys
\[
dX(t)=a(X(t)-X(t-1/3))\,dt + \Sigma(s)\,dB(t), \quad t\geq 0,
\]
where $\Sigma\in C(\Rp;\mathbb{R}^{1\times d'})$, $X(t)=\phi(t)$ for
$t\in [-1/3,0]$, where $\phi\in C([-1/3,0],\R)$. Let
$a=3/(1-1/\e)>0$. This is equivalent to choosing $\tau=1/3$ and the
finite measure $\nu(ds)=a\delta_{0}(ds)-a \delta_{-1/3}(ds)$. Then
it can be shown that $\nu([-t,0])\geq 0$ for all $t\in [0,1/3]$ with
$\nu([-1/3,0])=0$. Also
\[
\int_{[-1/3,0]} s\,\nu(ds)=\frac{1}{1-\e^{-1}}>1.
\]
Consequently, all the conditions of part (i), Theorem~3.3 in
\cite{jamrcs10} hold, and therefore there is a unique positive real
solution $\lambda_1>0$ of $g_\nu(\lambda_1)=0$ where
$g_\nu(\lambda)=\lambda-a+a\e^{-\lambda/3}$, and moreover
$\alpha=\lambda_1$. Since $a=3/(1-1/\e)$, it is easily verified that
$\alpha=\lambda_1=3$. Furthermore, as $g_\nu'(\lambda_1)=1-a
\e^{-1}/3\neq 0$, it can be shown that $n=0$ in
Theorem~\ref{thm:finite}, and moreover by l'H\^{o}pital's rule that
\[
P_1^\ast=\lim_{\lambda\to \lambda_1}
\frac{\lambda-\lambda_1}{g_\nu(\lambda)}
=\frac{1}{g_\nu'(3)}=\frac{1-\e^{-1}}{1-2\e^{-1}}.
\]
Therefore, assuming \eqref{eq:SigL2fin} holds, then all the
conditions of Theorem~\ref{thm:finite} apply, we have that
\[
\lim_{t\to\infty}\frac{X(t)}{\e^{3t}}= P_1^*\phi(0) +
P_1^*\int_{-\tau}^{0}\int_{[-\tau,u]}\e^{3u}\nu(ds)\phi(s-u)du
+P_1^*\int_{0}^{\infty}\e^{-3 s}\Sigma(s) dB(s).
\]
\end{example}

\section{Proofs of Supporting Results}}\label{sect:proof3}
This section contains the proofs of some supporting results: the
first part of this section concerns the variation of constants
formula \eqref{eq.stochconfor} in
Proposition~\ref{prop.stochresrep}; the rest of the section is
devoted to the a.s. convergence to zero of stochastic integrals
whose integrands involve $t$--dependence, but have special features.
\subsection{Proof of Proposition~\ref{prop.stochresrep}}
Define $w$ to be the unique continuous solution of
\[
    w'(t) = f(t) + \int_{[0,t]}\mu(ds)w(t-s), \quad t\geq0, \quad w(0)=0.
\]
Then $w(t)=\int_{0}^{t}r(t-s)f(s)\,ds$ for $t\geq 0$. Noting that
$X$ is the unique continuous adapted process which obeys
\eqref{eq.stoch}, we may defined the continuous adapted process
$Z=\{Z(t):t\geq 0\}$ by $Z(t):= X(t)-w(t)$ for $t\geq0$. Then $Z$ is
a semimartingale, and is represented by
\[
    dZ(t) = \int_{[0,t]}\mu(ds)Z(t-s) + \Sigma(t)dB(t), \quad t\geq0, \quad Z(0)=X_0.
\]
Hence Rei\ss, Riedle and van Gaans \cite[Lemma 6.1]{rrg} gives
\[
    Z(t) = r(t)X_0 + \int_{0}^{t}r(t-s)\Sigma(s)dB(s), \quad\text{$\P$-a.s.} \quad t\geq0,
\]
which rearranges to yield \eqref{eq.stochconfor}.

\subsection{Stochastic limit results}
Parts of the proofs of our main results involve the proof of some
subsidiary results which may themselves be of independent interest.
They are stated and proven here. We start with the proof of a
preliminary lemma, which will be used in the proof of
Proposition~\ref{prop:Y}.
\begin{lemma} \label{lemma.tminkskfto0}
Suppose $f\in L^2([0,\infty),\R^{d\times r})$. If $k>0$, then
\[
\lim_{t\to\infty} \frac{1}{(1+t)^k}\int_0^t s^k f(s)\,dB(s)=0,
\quad\text{a.s.}
\]
\end{lemma}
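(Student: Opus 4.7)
The strategy is to use integration by parts to pull the $s^k$ weight out of the stochastic integral and then exploit the martingale convergence theorem together with L'H\^opital's rule to handle the resulting pieces pathwise.

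Since $f\in L^2([0,\infty);\R^{d\times r})$, the $\R^d$-valued martingale $\tilde M(t):=\int_0^t f(s)\,dB(s)$ satisfies $\mathbb{E}[\norm{\tilde M(t)}^2]=\int_0^t \norm{f(s)}_F^2\,ds\le \int_0^\infty \norm{f(s)}_F^2\,ds<\infty$ for every $t\ge 0$, hence it is $L^2$-bounded. By the martingale convergence theorem there is an almost surely finite random vector $\tilde M_\infty$ with $\lim_{t\to\infty}\tilde M(t)=\tilde M_\infty$ a.s. All remaining work can be carried out pathwise on this almost-sure event.

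Next, apply the It\^o product formula to the continuous semimartingale $\tilde M$ and to the continuous finite-variation function $s\mapsto s^k$ (whose covariation with $\tilde M$ vanishes) to obtain
\[
t^k\tilde M(t)=\int_0^t s^k\,d\tilde M(s)+\int_0^t \tilde M(s)\,d(s^k)
=\int_0^t s^k f(s)\,dB(s)+k\int_0^t s^{k-1}\tilde M(s)\,ds.
\]
Dividing by $(1+t)^k$ gives
\[
\frac{1}{(1+t)^k}\int_0^t s^k f(s)\,dB(s)
=\left(\frac{t}{1+t}\right)^{\!k}\tilde M(t)-\frac{k}{(1+t)^k}\int_0^t s^{k-1}\tilde M(s)\,ds.
\]

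The first term on the right converges almost surely to $\tilde M_\infty$. For the second, invoke L'H\^opital's rule pathwise: the denominator $(1+t)^k/k$ is strictly increasing to infinity with derivative $(1+t)^{k-1}$, while the numerator $\int_0^t s^{k-1}\tilde M(s)\,ds$ has derivative $t^{k-1}\tilde M(t)$, so the ratio of derivatives is $(t/(1+t))^{k-1}\tilde M(t)\to \tilde M_\infty$. Hence the second term also converges a.s. to $\tilde M_\infty$, and subtracting yields $\lim_{t\to\infty}(1+t)^{-k}\int_0^t s^k f(s)\,dB(s)=0$ a.s.

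No serious obstacle is expected. The only minor point is that for $0<k<1$ the factor $s^{k-1}$ is singular at the origin; but since $\tilde M$ is continuous with $\tilde M(0)=0$, the integrand is bounded near $s=0$ by the path's modulus of continuity and integrability is immediate. The entire argument is pathwise once the a.s. convergence of $\tilde M$ is in hand.
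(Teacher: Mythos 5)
Your proof is correct, but it follows a genuinely different route from the paper's. The paper applies It\^o's formula to $\norm{K(t)}^2$ where $K(t)=(1+t)^{-k}\int_0^t s^k f(s)\,dB(s)$, decomposes this as $A_1(t)-A_2(t)+M(t)$ with $A_1$ convergent, $A_2$ non--decreasing and $M$ a martingale of finite quadratic variation, invokes the Liptser--Shiryaev convergence theorem for nonnegative semimartingales to get $\norm{K(t)}^2\to\kappa$ a.s., and then rules out $\kappa>0$ by a contradiction: on that event $A_2(t)\sim 2k\kappa\log t\to\infty$ while the other two terms stay bounded. You instead integrate by parts (the It\^o product rule for $t^k$ and $\tilde M(t)=\int_0^t f\,dB$), reducing everything to the $L^2$-martingale convergence theorem plus a pathwise l'H\^opital argument showing that both resulting terms converge to the \emph{same} limit $\tilde M_\infty$. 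Your argument is shorter, identifies why the limit is zero (a telescoping of $\tilde M_\infty$) rather than deriving it by contradiction, and avoids the semimartingale convergence machinery entirely; the paper's Lyapunov-type argument is less dependent on being able to peel the weight $s^k$ off the integrand and so generalises more readily to kernels that do not factor. One small imprecision: for $0<k<1$ the integrand $s^{k-1}\tilde M(s)$ need not be bounded near $s=0$ as you claim (continuity of $\tilde M$ alone does not give $\tilde M(s)=O(s^{1-k})$); what you actually need, and what holds, is integrability, since $s^{k-1}\in L^1(0,1)$ and $\tilde M$ is continuous hence bounded on $[0,1]$. This does not affect the validity of the proof.
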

\begin{proof}
Define
\[
K(t)=\frac{1}{(1+t)^k}\int_0^t s^k f(s)\,dB(s), \quad t\geq 0.
\]
Then $dK(t)=-k(1+t)^{-1}K(t)\,dt + (1+t)^{-k}t^kf(t)\,dB(t)$. Hence
for $i=1,\ldots,d$ with $K_i(t):=\langle K(t),\mathbf{e}_i\rangle$,
we have
\[
dK_i(t)=-k(1+t)^{-1}K_i(t)\,dt + \sum_{j=1}^r \frac{t^k}{(1+t)^k}
f_{ij}(t)\,dB_j(t).
\]
Therefore
\begin{multline*}
d\|K(t)\|^2=\left(-2k(1+t)^{-1}\|K(t)\|^2 +
\frac{t^{2k}}{(1+t)^{2k}} \|f(t)\|^2_F \right)\,dt
\\+\sum_{i=1}^d 2K_i(t)\sum_{j=1}^r \frac{t^k}{(1+t)^k} f_{ij}(t)\,dB_j(t).
\end{multline*}
Now define the non--decreasing processes $A_1$ and $A_2$ by
\[
A_1(t)= \int_0^t \frac{s^{2k}}{(1+s)^{2k}} \|f(s)\|^2_F\,ds, \quad
A_2(t)= \int_0^t 2k(1+s)^{-1}\|K(s)\|^2\,ds.
\]
and the martingale $M$ by
\[
M(t)= \sum_{j=1}^r \int_0^t \sum_{i=1}^d 2K_i(s)\frac{s^k}{(1+s)^k}
f_{ij}(s)\,dB_j(s).
\]
Then we have
\[
\|K(t)\|^2 = A_1(t)-A_2(t)+M(t), \quad t\geq 0.
\]
Since $f$ is in $L^2(0,\infty)$, we notice that $A_1(t)$ tends to a
finite limit as $t\to\infty$. Therefore, we have that $\|K(t)\|^2\to
\kappa$ as $t\to\infty$ a.s where $\kappa \in [0,\infty)$ a.s. (It
is known that $\lim_{t\to\infty}\|K(t)\|^2$ exists and is finite
due to \cite[Theorem~7, pp.139]{LipShir}). Then by l'H\^{o}pital's
rule we have
\[
\lim_{t\to\infty} \frac{A_2(t)}{\log t} = 2k\kappa.
\]
Notice now that $M$ has quadratic variation
\[
\langle M\rangle(t)= \int_0^t \sum_{j=1}^r \left(\sum_{i=1}^d
2K_i(s)\frac{s^k}{(1+s)^k} f_{ij}(s)\right)^2\,ds.
\]
Therefore by the Cauchy--Schwartz inequality
\begin{align*}
\langle M\rangle(t) \leq  \int_0^t \sum_{j=1}^r 4\sum_{l=1}^d
K_l^2(s)\sum_{i=1}^d \frac{s^{2k}}{(1+s)^{2k}} f_{ij}^2(s)\,ds \leq
4\int_0^t \|K(s)\|^2 \|f(s)\|^2_F\,ds.
\end{align*}
Since $f$ is in $L^2(0,\infty)$, we see that $\lim_{t\to\infty}
\langle M \rangle(t)$ is finite and hence that $M$ tends to a finite
limit a.s. Let $A=\{\omega:\kappa(\omega)>0\}$ and suppose that
$\mathbb{P}[A]>0$. Then on $A$ we have $\lim_{t\to\infty}
\|K(t,\omega)\|^2=-\infty$, which is a contradiction. Hence
$\mathbb{P}[A]=0$, or $\kappa=0$ a.s. Therefore $K(t)\to 0$ as
$t\to\infty$, a.s., as required.
\end{proof}

The proof of Proposition~\ref{prop:Y}, in the case $n=0$, uses
Lemma~3 from Appleby \cite{ja:2003}; this lemma is used in the proof
of the next supporting convergence result, so is stated for
completeness.
\begin{lemma}\label{lm:ja2003}
    Suppose $x:\Rp\to\Rp$ is a continuous, integrable function, and $\eta>0$ is any fixed constant.
    Then, the sequence $\{a_n\}_{n=0}^{\infty}$ given by $a_0=0$ and
    \[
        a_{n+1} = \inf\left\{t\in[a_n+\eta/2,a_n+3\eta/4]: x(t)=\min_{a_n+\eta/2\leq\tau\leq a_n+3\eta/4}x(\tau)\right\},
        \quad n\in\mathbb{Z}^+,
    \]
satisfies
    \[
        \frac{\eta}{4}<a_{n+1} - a_n < \eta \quad \text{for all $n\in\mathbb{Z}^+$}, \quad \lim_{n\to\infty}a_n=\infty,
    \]
    together with
    \[
        \sum_{n=0}^{\infty}x(a_n)<\infty.
    \]
\end{lemma}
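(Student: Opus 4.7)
The plan is to verify the three conclusions in the order stated, with the summability being the only non-trivial item. First, since $x$ is continuous on each compact interval $J_n := [a_n+\eta/2,\, a_n+3\eta/4]$, the minimum in the definition of $a_{n+1}$ is attained, the argmin set is closed and nonempty, and hence $a_{n+1} = \inf(\mathrm{argmin}_{J_n} x) \in J_n$. Consequently $\eta/2 \le a_{n+1}-a_n \le 3\eta/4$, which lies in $(\eta/4,\eta)$ and establishes the first claim. The divergence $a_n \to \infty$ follows immediately by iterating the lower bound: $a_n \ge n\eta/2 \to \infty$.

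For the summability I would exploit the geometric observation that the search intervals $J_n$ are pairwise disjoint. Indeed, since $a_{n+1} - a_n \ge \eta/2$, one has
\[
a_{n+1} + \eta/2 \ge a_n + \eta > a_n + 3\eta/4,
\]
so $J_{n+1}$ lies strictly to the right of $J_n$, and inductively all the $J_n$ are pairwise disjoint subsets of $[0,\infty)$. By the defining property of $a_{n+1}$ we have $x(a_{n+1}) = \min_{s\in J_n} x(s)$, and $J_n$ has length $\eta/4$, hence
\[
\int_{J_n} x(s)\,ds \ge \frac{\eta}{4}\, x(a_{n+1}).
\]
Summing over $n\ge 0$ and using the disjointness together with $x\ge 0$ gives
\[
\sum_{n=1}^{\infty} x(a_n) = \sum_{n=0}^{\infty} x(a_{n+1}) \le \frac{4}{\eta}\sum_{n=0}^{\infty}\int_{J_n} x(s)\,ds \le \frac{4}{\eta}\int_0^\infty x(s)\,ds < \infty.
\]
Since $x(a_0) = x(0)$ is finite by continuity, the full series converges.

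The only step that requires any attention is the disjointness of the $J_n$, but this is a direct consequence of the gap between $\max J_n = a_n + 3\eta/4$ and $\min J_{n+1} = a_{n+1} + \eta/2$ forced by the lower bound $a_{n+1}-a_n \ge \eta/2$; no further properties of $x$ beyond continuity and nonnegative integrability enter. In summary, the three assertions follow respectively from the construction, from telescoping the lower step bound, and from a disjointness-plus-mean-value-style estimate against $\int_0^\infty x$.
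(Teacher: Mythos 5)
Your proof is correct: the paper itself does not prove this lemma (it is quoted from Appleby~\cite{ja:2003} and stated without proof), and your argument --- the minimum over each length-$\eta/4$ interval $J_n$ is dominated by $\frac{4}{\eta}\int_{J_n}x$, the $J_n$ are pairwise disjoint because $a_{n+1}-a_n\ge\eta/2$, and hence the series is controlled by $\int_0^\infty x$ --- is exactly the standard argument behind that cited result. All steps check out, including the strict bounds $\eta/4<a_{n+1}-a_n<\eta$ from $a_{n+1}\in J_n$ and the separate treatment of the $n=0$ term.
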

The following lemma which will be used in the proof of
Proposition~\ref{prop:Y} ($n=0$), is a mild adaptation of Lemma~5.2
from \cite{jamrcs10}.
\begin{lemma}\label{lm:kL2}
    Let $k:\Rp\to\R$ be such that $k,k'\in L^2([0,\infty);\R)$.
    Define for $f\in L^2([0,\infty);\R)$ the Gaussian process $\{K(t):t\geq0\}$ by
    \[
        K(t) = \int_{0}^{t}k(t-s)f(s)\,dB(s).
    \]
    Then $\lim_{t\to\infty}K(t)=0$, a.s.
\end{lemma}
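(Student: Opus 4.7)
\emph{Plan.}
The strategy is to combine an almost sure subsequential convergence (obtained via the integrability of $t\mapsto \mathbb{E}[K(t)^2]$ together with Lemma~\ref{lm:ja2003}) with a deterministic control of the oscillations of $K$ between successive subsequence points, the latter effected by a stochastic Fubini decomposition of $K$.

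To set up the subsequence, note that $k,k'\in L^2(\Rp)$ forces $k$ to be (after modification on a null set) continuous, bounded and vanishing at infinity; indeed $k(t)^2 = k(0)^2 + 2\int_0^t k(s)k'(s)\,ds$ is bounded by $k(0)^2+2\|k\|_2\|k'\|_2$ via Cauchy--Schwarz. Define $x(t):=\mathbb{E}[K(t)^2]=\int_0^t k(t-s)^2 f(s)^2\,ds$. By Fubini, $\int_0^\infty x(t)\,dt=\|k\|_2^2\|f\|_2^2<\infty$, while the $L^1$-continuity of translations (applied to $f^2$) combined with the boundedness of $k^2$ yields continuity of $x$. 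Applying Lemma~\ref{lm:ja2003} with any fixed $\eta>0$ therefore produces a sequence $\{a_n\}$ with $a_{n+1}-a_n\in(\eta/4,\eta)$, $a_n\to\infty$, and $\sum_n x(a_n)<\infty$. Markov's inequality and the Borel--Cantelli lemma then give $K(a_n)\to 0$ almost surely.

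Next I would promote this to a full pathwise limit by decomposing $K$. Starting from $k(t-s)=k(0)+\int_s^t k'(u-s)\,du$ and applying stochastic Fubini (whose hypothesis $\int_0^t f(s)^2\int_s^t k'(u-s)^2\,du\,ds\leq \|k'\|_2^2\|f\|_2^2<\infty$ is met) one obtains
\[
K(t)=k(0)J(t)+W(t),\qquad J(t):=\int_0^t f(s)\,dB(s),\quad W(t):=\int_0^t Z(u)\,du,
\]
where $Z(u):=\int_0^u k'(u-s)f(s)\,dB(s)$. Since $f\in L^2$, the martingale convergence theorem gives $J(t)\to J(\infty)$ a.s., and hence $\sup_{t\ge a_n}|J(t)-J(a_n)|\to 0$ a.s. For $W$, It\^o's isometry combined with Fubini yields $\mathbb{E}\int_0^\infty Z(u)^2\,du=\|k'\|_2^2\|f\|_2^2<\infty$, so $\int_0^\infty Z(u)^2\,du<\infty$ a.s., whence $\int_{a_n}^{a_{n+1}}Z(u)^2\,du\to 0$ a.s.; combined with Cauchy--Schwarz and $a_{n+1}-a_n\le \eta$, this gives
\[
\sup_{t\in[a_n,a_{n+1}]}|W(t)-W(a_n)|\leq \int_{a_n}^{a_{n+1}}|Z(u)|\,du\leq\sqrt{\eta}\left(\int_{a_n}^{a_{n+1}}Z(u)^2\,du\right)^{\!1/2}\to 0 \text{ a.s.}
\]

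Combining the three almost sure convergences $K(a_n)\to 0$, $\sup|J(t)-J(a_n)|\to 0$ and $\sup|W(t)-W(a_n)|\to 0$ yields $K(t)\to 0$ a.s. The main obstacle, in my view, is the correct decomposition of $K$: the interchange of Riemann and stochastic integration is the mechanism by which the smoothness hypothesis $k'\in L^2$ is consumed, and without it one would only have control of $\mathbb{E}[K(t)^2]$, which is insufficient to upgrade subsequential convergence to a pathwise limit. The rest of the argument reduces to standard Itô and Borel--Cantelli type estimates together with the subsequence supplied by Lemma~\ref{lm:ja2003}.
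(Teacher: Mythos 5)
Your proof is correct and follows essentially the same route as the paper's: the same stochastic Fubini decomposition $k(t-s)=k(0)+\int_s^t k'(u-s)\,du$, the same application of Lemma~\ref{lm:ja2003} to $t\mapsto\mathbb{E}[K(t)^2]$ to extract the subsequence $\{a_n\}$, and the same control of the oscillation of $K$ over $[a_n,a_{n+1}]$. The only difference is in the final assembly: where the paper sums $\mathbb{E}\left[\sup_{a_n\le t\le a_{n+1}}|K(t)|^2\right]$ over $n$ (using Doob's inequality for the martingale increment) and invokes monotone convergence, you combine Borel--Cantelli for $K(a_n)$, the martingale convergence theorem for $\int_0^t f\,dB$, and the a.s.\ finiteness of $\int_0^\infty Z(u)^2\,du$ --- an equivalent and equally valid bookkeeping.
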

\begin{proof}
    We re--express $K$, using the stochastic Fubini Theorem, e.g. \cite[Theorem~4.6.64, pp.210--211]{prot}, which leads to 
    \begin{align*}
        K(t) &= \int_{0}^{t}\left(k(0) + \int_{0}^{t-s}k'(u)\,du\right)f(s)\,dB(s) \\
        &= \int_{0}^{t}k(0)f(s)\,dB(s) + \int_{0}^{t}\int_{s}^{t}k'(v-s)\,dv\,f(s)\,dB(s) \\
        &= k(0)\int_{0}^{t}f(s)\,dB(s) + \int_{0}^{t}\int_{0}^{v}k'(v-s)f(s)\,dB(s)\,dv.
    \end{align*}
    Then for any increasing sequence $\{a_n\}_{n=0}^{\infty}$ we have, for $t\in[a_n,a_{n+1})$,
    \[
        K(t) = K(a_n) + k(0)\int_{a_n}^{t}f(s)\,dB(s) + \int_{a_n}^{t}\int_{0}^{v}k'(v-s)f(s)\,dB(s)\,dv.
    \]
    Squaring, taking suprema and finally an expectation across this inequality gives
    \begin{align}
        \mathbb{E}\left[\sup_{a_n\leq t\leq a_{n+1}}|K(t)|^2\right]
        &\leq 3\,\mathbb{E}\left[K(a_n)^2\right]
        + 3\,k(0)^2\,\mathbb{E}\left[\sup_{a_n\leq t\leq a_{n+1}}\left|\int_{a_n}^{t}f(s)\,dB(s)\right|^2\right] \notag \\
        &\qquad+ 3\,\mathbb{E}\left[\sup_{a_n\leq t\leq a_{n+1}}\left|\int_{a_n}^{t}\int_{0}^{v}k'(v-s)f(s)\,dB(s)\,dv\right|^2\right].
        \label{eq:EK}
    \end{align}
We consider each term on the right--hand side separately. Now for
the second term, applying Doob's inequality, c.f. e.g.
\cite[Theorem~1.38]{Mao2} yields
\[
    \mathbb{E}\left[\sup_{a_n\leq t\leq a_{n+1}}\left|\int_{a_n}^{t}f(s)\,dB(s)\right|^2\right] \leq 4 \int_{a_n}^{a_{n+1}}f(s)^2\,ds
\]
and thus
\begin{equation}\label{eq:t3sum}
    \sum_{n=0}^{\infty}\mathbb{E}\left[\sup_{a_n\leq t\leq a_{n+1}}\left|\int_{a_n}^{t}f(s)\,dB(s)\right|^2\right] <+\infty.
\end{equation}
For the third term, applying the Cauchy--Schwarz inequality gives
\begin{align*}
    \mathbb{E}\biggl[\sup_{a_n\leq t\leq a_{n+1}}&\left|\int_{a_n}^{t}\int_{0}^{v}k'(v-s)f(s)\,dB(s)\,dv\right|^2\biggr] \\
    &\qquad \leq \mathbb{E}\left[\sup_{a_n\leq t\leq a_{n+1}} (t-a_n) \int_{a_n}^{t}\left|\int_{0}^{v}k'(v-s)f(s)\,dB(s)\right|^2\,dv\right] \\
    &\qquad =(a_{n+1}-a_n) \int_{a_n}^{a_{n+1}}\mathbb{E}\left[\left|\int_{0}^{v}k'(v-s)f(s)\,dB(s)\right|^2\right]dv \\
    &\qquad =(a_{n+1}-a_n) \int_{a_n}^{a_{n+1}}\int_{0}^{v}k'(v-s)^2f(s)^2\,ds\,dv.
\end{align*}
Now suppose that $0<a_{n+1}-a_n<\eta$ for some $\eta>0$, then
\begin{align}
    \sum_{n=1}^{\infty}\mathbb{E}\biggl[\sup_{a_n\leq t\leq a_{n+1}}&\left|\int_{a_n}^{t}\int_{0}^{v}k'(v-s)f(s)\,dB(s)\,dv\right|^2\biggr] \notag \\
    &\leq \eta\sum_{n=1}^{\infty}\int_{a_n}^{a_{n+1}}\int_{0}^{v}k'(v-s)^2f(s)^2\,ds\,dv <+\infty. \label{eq:t2sum}
\end{align}
Now the first term, $t\mapsto x(t)=\mathbb{E}[K(t)^2]$, is
continuous and non--negative, also
\[
    \int_{0}^{\infty}x(t)\,dt = \int_{0}^{\infty}k(t)^2\,dt \int_{0}^{\infty}f(s)^2\,ds <+\infty.
\]
Therefore by Lemma~\ref{lm:ja2003}, for all $\eta>0$ there exists a
sequence $\{a_n\}_{n=0}^{\infty}$ such that
\begin{equation}\label{eq:t1sum}
    \sum_{n=0}^{\infty}x(a_n) = \sum_{n=0}^{\infty}\mathbb{E}[K(a_n)^2] <+\infty.
\end{equation}
So, using \eqref{eq:t3sum}, \eqref{eq:t2sum} and \eqref{eq:t1sum} in
\eqref{eq:EK} yields
\[
    \sum_{n=0}^{\infty}\mathbb{E}\left[\sup_{a_n\leq t\leq a_{n+1}}|K(t)|^2\right]<+\infty.
\]
By the Monotone Convergence Theorem, c.f. e.g.
\cite[Theorem~5.3]{Williams},
\[
    \mathbb{E}\left[\sum_{n=0}^{\infty}\sup_{a_n\leq t\leq a_{n+1}}|K(t)|^2\right]<+\infty.
\]
and hence
\[
    \sum_{n=0}^{\infty}\sup_{a_n\leq t\leq a_{n+1}}|K(t)|^2<+\infty, \quad \text{a.s.}
\]
Thus,
\[
    \lim_{n\to\infty}\sup_{a_n\leq t\leq a_{n+1}}|K(t)|^2 =0, \quad a.s.
\]
and therefore $\lim_{t\to\infty}K(t)=0$, a.s.
\end{proof}

\section{Proof of Proposition~\ref{prop:Y} and Corollary~\ref{cor:Yf}}
In this section, we give the proofs of the limiting behaviour of the
stochastic and deterministic convolutions which were stated in
Section~\ref{sect:asystochdetconv}.
\subsection{Proof of Proposition~\ref{prop:Y} for $\bf n\geq1$}
In the following $M$ denotes a positive constant whose value may
change from line to line.
        Using \eqref{eq:rR1} and \eqref{eq:Yvar} we may write
        \[
            Y(t) = \int_{0}^{t}S(t-s)\Sigma(s)\, dB(s) + \int_{0}^{t}R(t-s)\Sigma(s)\, dB(s), \quad t\geq0,
        \]
        where
        \[
            S(t) = \sum_{j=1}^{N}\e^{\alpha t}\{P_j(t)\cos(\beta_j t) + Q_j(t)\sin(\beta_j t)\} .
        \]
Thus,
\begin{equation}\label{eq:YSR}
    \frac{Y(t)}{t^n\e^{\alpha t}} = \int_{0}^{t}\frac{S(t-s)}{t^n\e^{\alpha t}}\Sigma(s)\, dB(s)
    + \int_{0}^{t}\frac{R(t-s)}{t^n\e^{\alpha t}}\Sigma(s)\, dB(s).
\end{equation}
We show using Theorem~\ref{thm.assuff2} that the second stochastic
integral term on the right--hand side above converges to zero almost
surely. So in the notation of Section~\ref{sect:stochlim} we define
\[
   H(t,s):= \frac{R(t-s)}{t^n\e^{\alpha t}}\Sigma(s).
\]
Now as $R(t)=O(t^{n-1}\e^{\alpha t})$ as $t\to\infty$ from
\eqref{eq:Rorder} it is natural to choose $H_{\infty}(s)=0_{n,d}$.
Thus we need only verify conditions \eqref{eq.Htilderateto02} and
\eqref{eq.H1to02}.
        Now, from \eqref{eq:Rorder} we have
        \begin{multline*}
            \int_{0}^{t}\norm{H(t,s)}_F^2 ds
            \\
\leq  \left(\frac{1+t}{t}\right)^{2n}\frac{1}{(1+t)^{2n}\e^{2\alpha
t}}\int_{0}^{t} M(1+t-s)^{2n-2} \e^{2\alpha(t-s)}\|\Sigma(s)\|^2_F\,
ds,
           \end{multline*}
           for some $M>0$.
           Hence for $t\geq 1$ we have
\begin{align*}
        \int_{0}^{t}\norm{H(t,s)}_F^2 ds    &\leq
 2^{2n}M\frac{1}{(1+t)^{2n}}\int_{0}^{t} (1+t-s)^{2n-2} \e^{-2\alpha s}\|\Sigma(s)\|^2_F\, ds\\
 &\leq
 2^{2n}M\frac{1}{(1+t)^{2}}\int_{0}^{t} \e^{-2\alpha s}\|\Sigma(s)\|^2_F\,
 ds\\
&\leq
 2^{2n}M\frac{1}{(1+t)^{2}}\int_{0}^\infty \e^{-2\alpha s}\|\Sigma(s)\|^2_F\,
 ds,
 \end{align*}
 where we use the fact that $\int_{0}^\infty e^{-2\alpha s}\|\Sigma(s)\|^2_F\,
 ds$ is finite.
    Therefore
        \[
            \lim_{t\to\infty}   \int_{0}^{t}\norm{H(t,s)}_F^2 ds \cdot \log t =0.
        \]
Next, we consider
\begin{align*}
\int_{k^\theta}^{(1+k)^\theta} \|H(s,s)\|^2_F\,ds &\leq
\int_{k^\theta}^{(1+k)^\theta} K  s^{-2n}\e^{-2\alpha
s}\norm{\Sigma(s)}_F^2\,ds \\
&\leq K k^{-2n\theta} \int_{k^\theta}^{(1+k)^\theta} \e^{-2\alpha
s}\norm{\Sigma(s)}_F^2\,ds,
\end{align*}
for some $K>0$. Since $n\geq 1$, $\theta>0$ and $\int_{0}^\infty
\e^{-2\alpha s}\|\Sigma(s)\|^2_F\,
 ds$ is finite, we have that
 \[
\lim_{k\to\infty} \int_{k^\theta}^{(1+k)^\theta} \|H(s,s)\|^2_F\,ds
\cdot \log k =0.
 \]
  Turning then to the derivative condition of \eqref{eq.H1to02} we see
        \begin{multline}\label{eq.dernm}
            H_1(t,s) = t^{-n}\e^{-\alpha t}R'(t-s)\Sigma(s) -\alpha\, t^{-n}\e^{-\alpha t}R(t-s)\Sigma(s) \\
            - n\,t^{-n-1}\e^{-\alpha t}R(t-s)\Sigma(s).
        \end{multline}
        Therefore we have
\begin{align*}
         \|H_1(t,s)\|_{F} &\leq  t^{-n} \e^{-\alpha t}\biggl( \|R'(t-s)\|_F +|\alpha|\, \|R(t-s)\|_F \\
            &\qquad + n t^{-1}\|R(t-s)\|_F  \biggr)\|\Sigma(s)\|_F,
\end{align*}
and so as $\|R(t)\|_F\leq M(1+t)^{n-1}\e^{\alpha t}$,
$\|R'(t)\|_F\leq M(1+t)^{n}\e^{\alpha t}$ we have for $t\geq 1$
\begin{align*}
\|H_1(t,s)\|_F &\leq M t^{-n} \e^{-\alpha s}
\biggl(  (1+t-s)^n +|\alpha| (1+t-s)^{n-1} \\
&\qquad+ n t^{-1}  (1+t-s)^{n-1}\biggr)\|\Sigma(s)\|_F\\
&\leq M t^{-n} (1+t-s)^n \left( 1 + (|\alpha|+n) (1+t-s)^{-1}\right)  \e^{-\alpha s} \| \Sigma(s)\|_F\\
&\leq M \left( 1 + |\alpha|+n \right) \cdot t^{-n} (1+t-s)^n
\e^{-\alpha s}\| \Sigma(s)\|_F.
\end{align*}
Thus for $t\geq 1$ we have
\begin{align*}
\int_0^t \|H_1(t,s)\|_F^2\,ds &\leq M_1^2 t^{-2n} \int_0^t (1+t-s)^{2n}  \e^{-2\alpha s}\| \Sigma(s)\|_F^2\,ds\\
 &\leq M_1^2 \left(\frac{1+t}{t}\right)^{2n} \int_0^t  \e^{-2\alpha s}\| \Sigma(s)\|_F^2\,ds\\
 &\leq M_1^2 2^{2n} \int_0^\infty  \e^{-2\alpha s}\| \Sigma(s)\|_F^2\,ds.
\end{align*}
Hence     $\int_{0}^{t}\norm{H_1(t,s)}_F^2ds $ may easily be bounded
above by a polynomially growing function. So we have shown that
        \begin{equation}  \label{eq:Rconv}
            \lim_{t\to\infty}\int_{0}^{t}\frac{R(t-s)}{t^n\e^{\alpha t}} \Sigma(s) \,dB(s) =0, \quad \text{a.s.}
        \end{equation}
    Next write
    \[
        P_j(t)=t^n P_{j}^* + P_{j,n-1}(t) \text{ and }  Q_j(t)=t^n Q_{j}^* + Q_{j,n-1}(t),
    \]
    where $P_{j,n-1}$ and $Q_{j,n-1}$ are matrix polynomials of order $n-1$.
    Then $S$ can be expressed according to
    \begin{align*}
            S(t) &= \sum_{j=1}^{N}\e^{\alpha t}t^n\{P_j^*\cos(\beta_j t) + Q_j^*\sin(\beta_j t)\}  \\
                        &\qquad +\sum_{j=1}^{N}\e^{\alpha t}\{P_{j,n-1}(t)\cos(\beta_j t) + Q_{j,n-1}(t)\sin(\beta_j t)\}.
    \end{align*}
    Thus,
    \begin{align}\label{eq:Sint}
        &\int_{0}^{t}\frac{S(t-s)}{t^n\e^{\alpha t}}\Sigma(s)\, dB(s) \\
        &\quad=\int_{0}^{t}\sum_{j=1}^{N}\e^{-\alpha s}\frac{(t-s)^n}{t^n}\{P_j^*\cos(\beta_j (t-s))
        + Q_j^*\sin(\beta_j (t-s))\} \Sigma(s)\, dB(s) \notag \\
        &\qquad +\int_{0}^{t}\sum_{j=1}^{N}\e^{-\alpha s} \frac{P_{j,n-1}(t-s)}{t^n}\cos(\beta_j (t-s)) \Sigma(s)\, dB(s) \notag \\
        &\qquad +\int_{0}^{t}\sum_{j=1}^{N}\e^{-\alpha s}\frac{Q_{j,n-1}(t-s)}{{t^n}}\sin(\beta_j (t-s))\Sigma(s)\, dB(s). \notag
    \end{align}
    We now argue that the second and third stochastic integrals on the right--hand side in \eqref{eq:Sint} tend to zero as $t\to\infty$.
    We focus on the second integral. Note that it suffices to show for any degree $n-1$ polynomial $P$ that
\[
\int_{0}^{t}  \frac{P(t-s)}{(1+t)^n}\cos(\beta (t-s))  \e^{-\alpha
s} \Sigma(s)\, dB(s)\to 0, \quad \text{ as } t\to\infty, \quad
\text{a.s.}
\]
By recalling the trigonometric identity, for any $a_1,a_2\in\R$,
    \begin{align}\label{eq:sumsin2}
        \cos(a_1-a_2) &= \cos(a_1)\cos(a_2)+\sin(a_1)\sin(a_2), \\
        \sin(a_1-a_2) &= \sin(a_1)\cos(a_2)-\cos(a_1)\sin(a_2), \notag
    \end{align}
    we see that it suffices to show that the process
\[
a(t)=\int_{0}^{t}  \frac{P(t-s)}{(1+t)^n} f(s)\,dB(s), 
\]
obeys $a(t)\to 0$ as $t\to\infty$ where $f$ is in
$L^2(\Rp;\R^{d\times d'})$ and $P$ is a matrix--valued polynomial of
degree $n-1$. Define $H(t,s)=P(t-s) (1+t)^{-n} f(s)$. Define
$H_\infty(s)=0$. Since $P$ is a polynomial, there exists $M$ such
that $|P(t)|\leq M(1+t)^{n-1}$ and $|P'(t)|\leq M(1+t)^{n-1}$ for
all $t\geq 0$.

Using Theorem~\ref{thm.assuff2} and the same procedure as used to
establish \eqref{eq:Rconv}, we get
    \[
        \lim_{t\to\infty}\int_{0}^{t}\sum_{j=1}^{N}\e^{-\alpha s}\frac{P_{j,n-1}(t-s)}{t^n}\cos(\beta_j (t-s)) \Sigma(s)\, dB(s)=0,
        \quad \text{a.s.}
    \]
    One can argue similarly that
    \[
        \lim_{t\to\infty}\int_{0}^{t}\sum_{j=1}^{N}\e^{-\alpha s}\frac{Q_{j,n-1}(t-s)}{t^n}\cos(\beta_j (t-s)) \Sigma(s)\, dB(s)=0,
         \quad \text{a.s.}
    \]
    We now turn our attention to the first integral term on the right--hand side of \eqref{eq:Sint}. Consider the integral
    \begin{equation} \label{def.A}
      A_j(t)=  \int_{0}^{t}\e^{-\alpha s}\frac{(t-s)^n}{t^n}P_j^*\cos(\beta_j (t-s))\Sigma(s)\, dB(s),
    \end{equation}
    and define
    \begin{multline*}
A_{j,0}(t)=  P_j^* \cos(\beta_j t) \int_{0}^{t}  \cos(\beta_j s) \e^{-\alpha s}\Sigma(s)\, dB(s) \\
+
 P_j^* \sin(\beta_j t)  \int_0^t \sin(\beta_j s) \e^{-\alpha s}\Sigma(s)\, dB(s).
    \end{multline*}
    Since $s\mapsto \e^{-\alpha s}\Sigma(s)$ is in $L^2(\Rp;\R^{d\times d'})$, if we define
    \begin{multline} \label{def.Aostar}
A_{j,0}^\ast(t)=  P_j^* \cos(\beta_j t) \int_{0}^\infty \cos(\beta_j
s) \e^{-\alpha s}\Sigma(s)\, dB(s) \\+
 P_j^* \sin(\beta_j t)  \int_0^\infty \sin(\beta_j s) \e^{-\alpha s}\Sigma(s)\, dB(s).
    \end{multline}
    we have that $A_{j,0}(t)-A_{j,0}^\ast(t)\to 0$ as $t\to\infty$ a.s.
By Newton's binomial expansion theorem
$(t-s)^n=\sum_{m=0}^{n}\binom{n}{m}t^{m}(-s)^{n-m}$ and using
\eqref{eq:sumsin2}, we get
  \begin{align*}
      A_j(t)&=
      \sum_{m=0}^n P_j^* (-1)^{n-m}  \binom{n}{m} \frac{1}{t^{n-m}} \int_{0}^{t} s^{n-m} \cos(\beta_j (t-s)) \e^{-\alpha s}\Sigma(s)\, dB(s) \\
    \end{align*}
    where we have defined for $k=1,\ldots,n$
    \[
A_{j,k}(t)= \frac{1}{t^k} \int_{0}^{t} s^k
 \left(\cos(\beta_j t) \cos(\beta_j s) + \sin(\beta_j t)\sin(\beta_j s)\right) \e^{-\alpha s}\Sigma(s)\, dB(s).
    \]
   This can be expressed as
      \begin{multline*}
A_{j,k}(t)=  \cos(\beta_j t) \frac{1}{t^k} \int_{0}^{t} s^k
\cos(\beta_j s)  \e^{-\alpha s}\Sigma(s)\, dB(s)
\\+
\sin(\beta_j t) \frac{1}{t^k} \int_{0}^{t} s^k \sin(\beta_j s)
\e^{-\alpha s}\Sigma(s)\, dB(s).
\end{multline*}
Now by applying Lemma~\ref{lemma.tminkskfto0} to each of the terms
on the righthand side, we get
\[
\lim_{t\to\infty} A_{j,k}(t)=0, \quad \text{a.s.}
\]
Therefore we see that
\begin{equation} \label{eq.Ajlim}
A_j(t)-A_{j,0}^\ast(t)
\to 0, \quad\text{as $t\to\infty$ a.s.}
\end{equation}

Define
\begin{equation} \label{def.Cj}
C_j(t)= \int_{0}^{t}\e^{-\alpha s}\frac{(t-s)^n}{t^n}
Q_j^*\sin(\beta_j (t-s)) \Sigma(s)\, dB(s)
\end{equation}
and
\[
C_{j,0}(t)=Q_j^* \int_{0}^{t}  \sin(\beta_j (t-s)) \e^{-\alpha
s}\Sigma(s)\, dB(s).
\]
Then
\begin{multline*}
C_{j,0}(t)=Q_j^* \sin(\beta_j t) \int_{0}^{t}
\cos(\beta_j s) \e^{-\alpha s}\Sigma(s)\, dB(s) \\
- Q_j^*\cos(\beta_j t)\int_0^t \sin(\beta_j s)\e^{-\alpha
s}\Sigma(s)\, dB(s),
\end{multline*}
and define
\begin{multline}  \label{def.Cjast}
C_{j,0}^\ast(t)=Q_j^* \sin(\beta_j t) \int_{0}^\infty
\cos(\beta_j s) \e^{-\alpha s}\Sigma(s)\, dB(s) \\
- Q_j^*\cos(\beta_j t)\int_0^\infty \sin(\beta_j s)\e^{-\alpha
s}\Sigma(s)\, dB(s).
\end{multline}
Then $C_{j,0}(t)-C_{j,0}^\ast(t)\to 0$ as $t\to\infty$ a.s., and by
proceeding as before we obtain
\begin{equation} \label{eq.Cjlim}
C_j(t)-C_{j,0}^\ast(t)
\to 0, \quad\text{as $t\to\infty$ a.s.}
\end{equation}

Therefore, returning to \eqref{eq:Sint} and using \eqref{def.A},
\eqref{def.Cj} we have
  \begin{align}\label{eq:Sint2}
        \lefteqn{\int_{0}^{t}\frac{S(t-s)}{t^n\e^{\alpha t}}\Sigma(s)\, dB(s) -\sum_{j=1}^N \{A_{j,0}^\ast(t) + C_{j,0}^\ast(t)\}} \\
        &=\sum_{j=1}^{N} \left\{ A_j(t) 
        - A_{j,0}^\ast(t)\right\}+
        \sum_{j=1}^N \left\{ C_j(t)
        -C_{j,0}^\ast(t)
        \right\}\notag \\
        &\quad +\int_{0}^{t}\sum_{j=1}^{N}\e^{-\alpha s} \frac{P_{j,n-1}(t-s)}{t^n}\cos(\beta_j (t-s)) \Sigma(s)\, dB(s) \notag \\
        &\quad +\int_{0}^{t}\sum_{j=1}^{N}\e^{-\alpha s}\frac{Q_{j,n-1}(t-s)}{{t^n}}\sin(\beta_j (t-s))\Sigma(s)\, dB(s), \notag
    \end{align}
    so by \eqref{eq.Ajlim} and \eqref{eq.Cjlim} we have
    \begin{equation} \label{eq:Sconv}
\lim_{t\to\infty} \left( \int_{0}^{t}\frac{S(t-s)}{t^n\e^{\alpha
t}}\Sigma(s)\, dB(s) -\sum_{j=1}^N \{A_{j,0}^\ast(t) +
C_{j,0}^\ast(t)\}\right)=0, \quad \text{a.s.}
    \end{equation}
    Using \eqref{eq:YSR}, \eqref{eq:Rconv}, \eqref{eq:Sconv} together with the definitions \eqref{def.Aostar} and \eqref{def.Cjast},
    we have
    \begin{align}\label{eq:Ycharexp}
        \lim_{t\to\infty}\left( \frac{Y(t)}{t^n\e^{\alpha t}}
        -\sum_{j=1}^{N} \{ \sin(\beta_j t)L_{1,j}+ \cos(\beta_j t)L_{2,j} \} \right)=0,
        \quad \text{a.s.}
    \end{align}
    where $L_{1,j}$ and $L_{2,j}$ are given by \eqref{def.L1} and \eqref{def.L2},
    which is \eqref{eq:asyY}.

\subsection{Proof of Proposition~\ref{prop:Y} for $\bf n=0$}
Using \eqref{eq:rR1} and \eqref{eq:Yvar} we may write
\[
    Y(t) = \int_{0}^{t}S(t-s)\Sigma(s)\, dB(s) + \int_{0}^{t}R(t-s)\Sigma(s)\, dB(s), \quad t\geq0,
\]
where
\[
    S(t) = \sum_{j=1}^{N}\e^{\alpha t}\{P_j^*\cos(\beta_j t) + Q_j^*\sin(\beta_j t)\} .
\]
Thus,
\begin{equation}\label{eq:YSR0}
    \e^{-\alpha t}Y(t) = \int_{0}^{t}\e^{-\alpha t}S(t-s)\Sigma(s)\, dB(s)
    + \int_{0}^{t}\e^{-\alpha t}R(t-s)\Sigma(s)\, dB(s).
\end{equation}
Defining $k(t)=\e^{-\alpha t}R(t)$, then from \eqref{eq:Rorder} and
\eqref{eq:R'order}, $k(t)= O(\e^{-\varepsilon t})$ and
\[
    |k'(t)| \leq |\alpha| |k(t)| + \e^{-\alpha t}|R'(t)| = O(\e^{-\varepsilon t})
\]
Thus
\[
    \int_{0}^{t}\e^{-\alpha t}R(t-s)\Sigma(s)\, dB(s) = \int_{0}^{t}k(t-s)\e^{-\alpha s}\Sigma(s)\, dB(s)
\]
and so Lemma~\ref{lm:kL2} applied element--wise gives
\begin{equation}\label{eq:Rto0}
    \lim_{t\to\infty}\int_{0}^{t}\e^{-\alpha t}R(t-s)\Sigma(s)\, dB(s)=0 \quad \text{a.s.}
\end{equation}
Moreover,
\begin{align}\label{eq:Sto0}
    \lim_{t\to\infty}\biggl(\int_{0}^{t}\e^{-\alpha t}&S(t-s)\Sigma(s)\, dB(s) \\
    & -\cos(\beta_j t)\int_{0}^{\infty}\e^{-\alpha s}\{P_j^*\cos(\beta_j s) - Q_j^*\sin(\beta_j s)\}\Sigma(s)\,dB(s) \notag \\
    &\quad -\sin(\beta_j t)\int_{0}^{\infty}\e^{-\alpha s}\{P_j^*\sin(\beta_j s) + Q_j^*\cos(\beta_j s)\}\Sigma(s)\,dB(s)\biggr)=0. \notag
\end{align}
Using \eqref{eq:Rto0} and \eqref{eq:Sto0} in \eqref{eq:YSR0}, gives
the required result.

\subsection{Proof of Corollary~\ref{cor:Yf}}
In order to prove Corollary~\ref{cor:Yf}, the following simple
asymptotic estimate is needed. It may be considered as a
deterministic analogue of Lemma~\ref{lemma.tminkskfto0}.
\begin{lemma}\label{lm:tlim}
For any $\phi\in L^1([0,\infty);\R^{d})$,
\[
    \lim_{t\to\infty}\frac{1}{t^j}\int_{0}^{t}s^j \phi(s)\,ds =0, \quad j=1,...,n.
\]
\end{lemma}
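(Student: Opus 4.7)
The plan is to prove this by a standard $\epsilon$-splitting argument exploiting the bound $s^j \le t^j$ on the interval $[0,t]$. The key observation is that for $0 \le s \le t$ we have $s^j/t^j \le 1$, so the weight $s^j/t^j$ inside the integral is uniformly bounded by $1$. This means the tail part of the integral is controlled directly by the $L^1$ norm of $\phi$ on the tail, while the head part is crushed by the $1/t^j$ prefactor.

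More concretely, I would fix an arbitrary component of the vector-valued function $\phi$ (so the argument reduces to scalar $\phi \in L^1([0,\infty);\R)$, and the vector statement follows componentwise). Given $\varepsilon>0$, the absolute integrability of $\phi$ guarantees the existence of $T=T(\varepsilon)>0$ with $\int_T^\infty |\phi(s)|\,ds < \varepsilon/2$. Then for $t\ge T$ I would split
\begin{equation*}
\left|\frac{1}{t^j}\int_0^t s^j \phi(s)\,ds\right|
\le \frac{1}{t^j}\int_0^T s^j |\phi(s)|\,ds + \frac{1}{t^j}\int_T^t s^j |\phi(s)|\,ds.
\end{equation*}
For the second term, using $s\le t$ on $[T,t]$ gives $s^j/t^j\le 1$, so that term is bounded above by $\int_T^t |\phi(s)|\,ds \le \varepsilon/2$. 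For the first term, the quantity $\int_0^T s^j|\phi(s)|\,ds$ is a fixed finite constant (bounded by $T^j \int_0^T |\phi(s)|\,ds$), so there exists $T'\ge T$ such that $t^{-j}\int_0^T s^j|\phi(s)|\,ds < \varepsilon/2$ for every $t\ge T'$. Combining the two estimates yields $|t^{-j}\int_0^t s^j\phi(s)\,ds|<\varepsilon$ for all $t\ge T'$, which is precisely the desired limit.

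There is no real obstacle here: the crucial inequality $s^j\le t^j$ on $[0,t]$ (which requires only $j\ge 0$, and in particular holds for $j=1,\ldots,n$) turns the weighted tail into an ordinary $L^1$ tail, and the head contribution vanishes automatically because of the polynomial decay factor $1/t^j$ with $j\ge 1$. The argument therefore works uniformly in $j\in\{1,\ldots,n\}$ without any additional hypotheses beyond $\phi\in L^1(\Rp;\R^d)$.
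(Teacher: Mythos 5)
Your proof is correct and follows essentially the same splitting argument as the paper: both exploit the bound $s^j\le t^j$ to control the tail of the integral by an $L^1$ tail of $\phi$ and let the head be killed by the prefactor. The only cosmetic difference is that the paper splits at $\theta t$ and lets $\theta\to 0$ after taking a limit superior, whereas you split at a fixed $T(\varepsilon)$ and send $t\to\infty$; the two are interchangeable here.
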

\begin{proof}
For any $\theta\in(0,1)$,
\begin{align*}
    \left|\frac{1}{t^j}\int_{0}^{t}s^j \phi(s)ds\right| &\leq \frac{1}{t^j}\int_{0}^{\theta t}s^j |\phi(s)|ds
        + \frac{1}{t^j}\int_{\theta t}^{t}s^j |\phi(s)|ds \\
        &\leq \theta^j\int_{0}^{\infty}|\phi(s)|ds + \int_{\theta t}^{\infty}|\phi(s)|ds
\end{align*}
Thus,
\[
    \limsup_{t\to\infty}\left|\frac{1}{t^j}\int_{0}^{t}s^j \phi(s)ds\right| \leq \theta^j\int_{0}^{\infty}|\phi(s)|ds.
\]
Letting $\theta\to0$ gives the result.
\end{proof}

We are now in a position to proceed with the proof of
Corollary~\ref{cor:Yf}. Firstly consider the case $n\geq1$. The
asymptotic behaviour of $Y$ is known from Proposition~\ref{prop:Y}.
Thus we concentrate solely upon the term
$\int_{0}^{t}r(t-s)f(s)\,ds$ in \eqref{eq:VpY} in determining the
asymptotic behaviour of $V$.
Defining
\[
    S(t) = \sum_{j=1}^{N}\e^{\alpha t}\{P_j(t)\cos(\beta_j t) + Q_j(t)\sin(\beta_j t)\}, \quad t\geq0.
\]
Then we have
\[
    \int_{0}^{t}\frac{r(t-s)}{t^n \e^{\alpha t}}f(s)\,ds
    = \int_{0}^{t}\frac{S(t-s)}{t^n \e^{\alpha t}}f(s)\,ds + \int_{0}^{t}\frac{R(t-s)}{t^n \e^{\alpha t}}f(s)\,ds.
\]
Then,
\begin{align*}
    \left|\int_{0}^{t}\frac{R(t-s)}{t^n \e^{\alpha t}}f(s)\,ds\right|
    &\leq \frac{1}{(1+t)^n}M\int_{0}^{t}(1+t-s)^{n-1}\e^{-\alpha s}|f(s)|ds \\
    &\leq \frac{1}{1+t}M\int_{0}^{t}\e^{-\alpha s}|f(s)|ds.
\end{align*}
Taking the limit superior, as $t\to\infty$, over this inequality
yields,
\[
    \lim_{t\to\infty}\int_{0}^{t}\frac{R(t-s)}{t^n \e^{\alpha t}}f(s)\,ds =0.
\]
In analysing the term $S(t-s)$ one may decompose the trigonometric
terms via \eqref{eq:sumsin2}, whilst the polynomial terms, $P_j$ and
$Q_j$ may be dealt with using Newton's binomial expansion, i.e.
\[
    (t-s)^n = \sum_{m=0}^{n} \binom{n}{m} t^{n-m}(-s)^m.
\]
This, together with Lemma~\ref{lm:tlim}, yields
\[
    \lim_{t\to\infty}\left(\int_{0}^{t}\frac{r(t-s)}{t^n \e^{\alpha t}}f(s)\,ds
    -\sum_{j=1}^{N}\{\sin(\beta_j t)D_{1,j} + \cos(\beta_j t)D_{2,j}\} \right) =0.
\]
with
\begin{align*}
D_{1,j} &= \int_{0}^{\infty}\e^{-\alpha s}\{P_j^*\sin(\beta_j s) + Q_j^*\cos(\beta_j s)\}  f(s)\,ds, \\
D_{2,j} &= \int_{0}^{\infty}\e^{-\alpha s}\{P_j^*\cos(\beta_j s) -
Q_j^*\sin(\beta_j s)\}  f(s)\,ds.
\end{align*}
Combining this with Proposition~\ref{prop:Y} yields the result for
$V$.

For the case $n=0$, the proof follows as for the case $n\geq1$.
However in the analysis of the remainder term, $R$, it is required
to understand the asymptotic behaviour of the integral
\[
    \int_{0}^{t}\e^{-\varepsilon(t-s)} \e^{-\alpha s}f(s)\,ds.
\]
This integral is the convolution of a term in $L^1(0,\infty)$ with a
term which tends to zero. Hence this integral itself tends to zero,
\cite[Theorem~2.2.2~(i)]{GrLoSt90}.

\section{Proof of Lemmas~\ref{lm:Rvol} and~\ref{lm:Rfin}} \label{sect:pflemmaRRprasy}
This section contains the asymptotic estimates needed for the
remainder terms $R$ defined in \eqref{eq:rRvol} and
\eqref{eq:rRsfde}.
\subsection{Proof of Lemma~\ref{lm:Rvol}}
We start with the proof of a preliminary lemma.
\begin{lemma}\label{lm:Keig}
    Let $K_{j,0}$ be defined by \eqref{eq:tayK} with $n=0$. Then
    \[
        \left(\lambda_j I_d - \int_{[0,\infty)}\e^{-\lambda_j s}\mu(ds) \right)K_{j,0} =0_{d,d},
    \]
    where $\lambda_j\in\Lambda'$ are zeroes of $h_{\mu}(\lambda)$.
\end{lemma}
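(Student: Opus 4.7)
The plan is to extract the identity $\Delta(\lambda_j)K_{j,0} = 0_{d,d}$ directly from the Laurent expansion \eqref{eq:tayK} with $n=0$, where I write $\Delta(\lambda) := \lambda I_d - \int_{[0,\infty)} e^{-\lambda s}\mu(ds)$. First I would observe that because $\alpha^\ast < \alpha$ and $\lambda_j \in \Lambda'$ satisfies $\Re(\lambda_j)=\alpha > \alpha^\ast$, the integral defining $\Delta$ converges absolutely in an open neighborhood of $\lambda_j$; consequently $\Delta$ itself is analytic at $\lambda_j$, and in particular $\Delta(\lambda_j)$ makes sense as a finite $d\times d$ matrix.

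Next, from \eqref{eq:tayK} with $n=0$ we have
\begin{equation*}
\Delta(\lambda)^{-1} = \frac{K_{j,0}}{\lambda-\lambda_j} + \hat{q}_j(\lambda),
\end{equation*}
valid on a punctured neighborhood of $\lambda_j$, where $\hat{q}_j$ is analytic at $\lambda_j$. Multiplying on the left by $\Delta(\lambda)$ yields $I_d$ on the right. I would then multiply through by the scalar $(\lambda-\lambda_j)$ to clear the pole, obtaining
\begin{equation*}
\Delta(\lambda)K_{j,0} + (\lambda-\lambda_j)\Delta(\lambda)\hat{q}_j(\lambda) = (\lambda-\lambda_j)I_d.
\end{equation*}
Now I would let $\lambda \to \lambda_j$. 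Both $\Delta$ and $\hat{q}_j$ are analytic at $\lambda_j$, so the second term on the left vanishes in the limit, as does the right-hand side. This leaves $\Delta(\lambda_j)K_{j,0} = 0_{d,d}$, which is precisely the claim.

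There is no real obstacle here; the only subtle point worth flagging is the appeal to $\alpha^\ast < \alpha$, which is what guarantees that $\Delta(\lambda)$ (and therefore the product $\Delta(\lambda)\hat{q}_j(\lambda)$) is well defined and analytic in a neighborhood of $\lambda_j$. Without this, the limit argument could fail because $\Delta$ might not even be defined on the critical line $\Re(\lambda)=\alpha$. Once this is noted, the argument is just Laurent bookkeeping and the identity follows in one line.
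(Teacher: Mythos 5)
Your proof is correct and is essentially identical to the paper's: both multiply the Laurent expansion \eqref{eq:tayK} on the left by $(\lambda-\lambda_j)\bigl(\lambda I_d - \int_{[0,\infty)}\e^{-\lambda s}\mu(ds)\bigr)$ and let $\lambda\to\lambda_j$, using the analyticity of $\hat{q}_j$ at $\lambda_j$. Your extra remark that $\alpha^\ast<\alpha$ is what makes the characteristic matrix analytic near the critical line is a sensible clarification but does not change the argument.
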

A corresponding result can be shown for the zeroes of the
characteristic equation, $g_\nu$, of the finite delay equation using
\eqref{eq:tayfK} and is omitted.
\begin{proof}[Proof of Lemma~\ref{lm:Keig}]
Multiply \eqref{eq:tayK} on the left by
$(\lambda-\lambda_j)\left(\lambda I_d -
\int_{[0,\infty)}\e^{-\lambda s}\mu(ds) \right)$ to get
\begin{multline*}
    (\lambda-\lambda_j)I_d = \left(\lambda I_d - \int_{[0,\infty)}\e^{-\lambda s}\mu(ds) \right)K_{j,0}
        \\+ (\lambda-\lambda_j)\left(\lambda I_d - \int_{[0,\infty)}\e^{-\lambda s}\mu(ds) \right) \hat{q}_j(\lambda).
\end{multline*}
Now let $\lambda\to\lambda_j$, recalling that $\hat{q}_j(\lambda)$
is analytic at $\lambda_j$, to get the result.
\end{proof}

We are now in a position to prove Lemma~\ref{lm:Rvol}. To start, we
define $\tilde{q}(t)=\e^{-\alpha t}q(t)$ for $t\geq 0$. Then
$\tilde{q}$ is differentiable a.e. and
\[
\int_0^\infty \e^{\varepsilon t}|\tilde{q}(t)|\,dt <+\infty,
\]
where $\varepsilon$ is defined as in Subsection~\ref{sub.Vol}. Also
$|\tilde{q}'(t)|\leq \e^{-\alpha t}|q'(t)|+|\alpha| \e^{-\alpha
t}|q(t)|$ for $t\geq 0$. Since $q, q'\in
L^1(\mathbb{R}^+;\varphi;\mathbb{R}^{d\times d})$, we have
\[
 \int_0^\infty \e^{\varepsilon t}|\tilde{q}'(t)|\,dt
 \leq \int_0^\infty  \e^{\varepsilon t}\e^{-\alpha t}|q'(t)|\,dt +\int_0^\infty |\alpha|\,
\e^{\varepsilon t}\e^{-\alpha t}|q(t)|\,dt<+\infty.
\]
Finally, we have that
\[
\tilde{q}(t)\e^{\varepsilon t}=\tilde{q}(0) + \int_0^t
\tilde{q}'(s)\e^{\varepsilon s}\,ds+ \varepsilon\int_0^t
\tilde{q}(s)\e^{\varepsilon s}\,ds,
\]
so $|\tilde{q}(t)|\leq C\e^{-\varepsilon t}$ for all $t\geq 0$.

Let $\Lambda_n'=\{\lambda_1,...,\lambda_N\}$. Then from
\eqref{eq.solsumrepvol} and \eqref{eq:rRvol}, we get
\begin{align*}
\e&^{-\alpha t}R(t) \\
&= \sum_{\lambda_j\in \Lambda_\varepsilon \setminus \Lambda'_n,
\Im(\lambda_j)\geq0} \e^{-(\alpha-\Re(\lambda_j)) t}
\{P_j(t)\cos(\Im(\lambda_j)t)+Q_j(t)\sin(\Im(\lambda_j) t)\} +
\tilde{q}(t)\\
&= \sum_{\lambda_j\in \Lambda' \setminus \Lambda'_n,
\Im(\lambda_j)\geq0} \e^{-(\alpha-\Re(\lambda_j)) t}
\{P_j(t)\cos(\Im(\lambda_j)t)+Q_j(t)\sin(\Im(\lambda_j) t)\} \\
&\quad+
 \sum_{\lambda_j\in \Lambda_\varepsilon\setminus \Lambda', \Im(\lambda_j)\geq0}
\e^{-(\alpha-\Re(\lambda_j)) t}
\{P_j(t)\cos(\Im(\lambda_j)t)+Q_j(t)\sin(\Im(\lambda_j) t)\} +
\tilde{q}(t).
\end{align*}
If $n=0$, then $R(t)=O(\e^{(\alpha-\varepsilon)t})$ as $t\to\infty$.
If $n\geq 1$, and $\Lambda_n'=\Lambda' \cap\{\Im(\lambda)\geq0\}$,
then $R(t)=O(\e^{(\alpha-\varepsilon) t})$. If $n\geq 1$, and
$\Lambda_n'\subset \Lambda' \cap\{\Im(\lambda)\geq0\}$, then
$R(t)=O(t^{n-1}\e^{\alpha t})$ as $t\to\infty$. Therefore if $n\geq
1$, we always have $R(t)=O(t^{n-1}\e^{\alpha t})$ as $t\to\infty$.

We now prove the estimate on the derivative. We deal here with the
case $n\geq1$.
    From \eqref{eq.fund1} we know that $r$ is differentiable and hence from \eqref{eq:rRvol} so too is $R$. Defining
    \[
        S(t):= \sum_{j=1}^{N}\e^{\alpha t}\{P_j(t)\cos(\beta_j t) + Q_j(t)\sin(\beta_j t)\}
    \]
    and using \eqref{eq.fund1} and \eqref{eq:rRvol} we have
    \begin{align*}
        R'(t) &=  r'(t) - S'(t)
        =  \int_{[0,t]}  \mu(ds)\, r(t-s) - S'(t).
    \end{align*}
It is clear from \eqref{eq.solsumrepvol} that
$r(t)=O(t^{n}\e^{\alpha t})$ and from the definition of $S$ that
$S'(t)=O(t^{n}\e^{\alpha t})$. Therefore, it follows that
$\norm{r(t)}\leq M(1+t)^{n}\e^{\alpha t}$ and $\norm{S'(t)}\leq
M(1+t)^{n}\e^{\alpha t}$ for $t\geq 0$ and some $M>0$. Hence as
$|\mu|\in M(\mathbb{R}_+;\mathbb{R})$ and $\int_{[0,\infty)}
\e^{-\alpha s}|\mu|(ds)<+\infty$, we have
 \begin{align*}
       \norm{R'(t)} &\leq \int_{[0,t]}  |\mu|(ds)\, \norm{r(t-s)} +  \norm{S'(t)} \\
   &\leq
   \int_{[0,t]}  |\mu|(ds)\, M(1+t-s)^{n}\e^{\alpha (t-s)} + M(1+t)^{n}\e^{\alpha t}\\
   &\leq
   \int_{[0,t]}  |\mu|(ds)\, M(1+t)^{n}\e^{\alpha (t-s)} + M(1+t)^{n}\e^{\alpha t}\\
   &\leq M(1+t)^{n} \e^{\alpha t}   \int_{[0,\infty)}  \e^{-\alpha s}|\mu|(ds)  + M(1+t)^{n}\e^{\alpha
   t},
    \end{align*}
and therefore  $R'(t) = O(t^n \e^{\alpha t})$ for $n\geq1$.

For the case $n=0$, we define
\[
    S(t):= \sum_{j=1}^{N}\e^{\alpha t}\{P_j^*\cos(\beta_j t) + Q_j^*\sin(\beta_j t)\},
\]
then the real function $S$ can be rewritten concisely using complex
constants as
\[
    S(t) = \sum_{\lambda_j\in\Lambda'} \e^{\lambda_j t} K_{j,0}.
\]
As $R(t) = r(t) - S(t)$ we have
\begin{align*}
    R'(t) &= r'(t) - S'(t)
    = \int_{[0,t]}\mu(ds)r(t-s) - \lambda_j\sum_{\lambda_j\in\Lambda'} \e^{\lambda_j t} K_{j,0} \\
    &= \int_{[0,t]}\mu(ds)R(t-s) + \int_{[0,t]}\mu(ds)\sum_{\lambda_j\in\Lambda'} \e^{\lambda_j (t-s)} K_{j,0}
        - \sum_{\lambda_j\in\Lambda'}\lambda_j\,\e^{\lambda_j t} K_{j,0} \\
    &= \int_{[0,t]}\mu(ds)R(t-s) - \sum_{\lambda_j\in\Lambda'}\e^{\lambda_j t}
    \left(\lambda_j\,I_d - \int_{[0,t]}\e^{-\lambda_j s}\mu(ds)\right) K_{j,0} \\
    & =  \int_{[0,t]}\mu(ds)R(t-s) - \sum_{\lambda_j\in\Lambda'}\e^{\lambda_j t}
    \left(\lambda_j\,I_d - \int_{[0,\infty)}\e^{-\lambda_j s}\mu(ds)\right) K_{j,0} \\
    &\qquad -  \sum_{\lambda_j\in\Lambda'}\e^{\lambda_j t}\int_{(t,\infty)}\e^{-\lambda_j s}\mu(ds) K_{j,0}.
\end{align*}
By Lemma~\ref{lm:Keig} the second term on the right--hand side is
equal to zero, and so
\begin{equation}\label{eq:R'O}
|R'(t)| \leq \left|\int_{[0,t]}\mu(ds)R(t-s)\right| +
\left|\sum_{\lambda_j\in\Lambda'}\e^{\lambda_j
t}\int_{(t,\infty)}\e^{-\lambda_j s}\mu(ds) K_{j,0}\right|.
\end{equation}
Now,
\begin{align*}
    \left|\int_{[0,t]}\mu(ds)R(t-s)\right| &\leq \int_{[0,t]}|\mu|(ds)M \e^{(\alpha-\varepsilon)(t-s)} \\
    &= \e^{(\alpha-\varepsilon)t} \int_{[0,t]}\e^{-(\alpha-\varepsilon)s}|\mu|(ds)M \\
    &\leq \e^{(\alpha-\varepsilon)t} \int_{[0,\infty)}\e^{-(\alpha-\varepsilon)s}|\mu|(ds)M.
\end{align*}
Thus, $\int_{[0,t]}\mu(ds)R(t-s) = O(\e^{(\alpha-\varepsilon)t})$.
Recalling that $\lambda_j=\alpha+i\beta_j$ and so $|\e^{\lambda_j
t}|=\e^{\alpha t}$. Thus,
\begin{align*}
        \left|\sum_{\lambda_j\in\Lambda'}\e^{\lambda_j t}\int_{(t,\infty)}\e^{-\lambda_j s}\mu(ds) K_{j,0}\right|
        &\leq \e^{\alpha t}\sum_{\lambda_j\in\Lambda'}\int_{(t,\infty)}\e^{-\alpha s}|\mu|(ds)M \\
        &= \e^{\alpha t}\sum_{\lambda_j\in\Lambda'}\int_{(t,\infty)}\e^{-\varepsilon s}\e^{-(\alpha-\varepsilon) s}|\mu|(ds)M \\
        &\leq \e^{(\alpha-\varepsilon) t}\sum_{\lambda_j\in\Lambda'}\int_{(t,\infty)}\e^{-(\alpha-\varepsilon) s}|\mu|(ds)M \\
        &\leq \e^{(\alpha-\varepsilon) t} M_1,
\end{align*}
where it is noted that $\Lambda'$ contains finitely many elements.
Therefore, \eqref{eq:R'O} gives
\[
    R'(t) = O(\e^{(\alpha-\varepsilon)t}), \quad t\to\infty,
\]
and this completes the proof.

\subsection{Proof of Lemma~\ref{lm:Rfin}}
We now  use \eqref{eq.solsumrep} to determine properties of $R$ of
\eqref{eq:rRsfde}. From \eqref{eq:rRsfde}
\begin{equation*} 
    r(t) = \sum_{j=1}^{N}\e^{\alpha t}\{P_j(t)\cos(\beta_j t) + Q_j(t)\sin(\beta_j t)\} + R(t), \quad
    t\geq0.
\end{equation*}

In the case when $\{\lambda_1,...,\lambda_N\}=\Lambda'
\cap\{\Im(\lambda)\geq0\}$, we have that
$R(t)=e^{(\alpha-\varepsilon) t}$ for all $\varepsilon\in
(0,\varepsilon_0)$. If $n\geq 1$, and
$\{\lambda_1,...,\lambda_N\}\subset \Lambda'
\cap\{\Im(\lambda)\geq0\}$, then $R(t)=O(t^{n-1}e^{\alpha t})$ as
$t\to\infty$. Therefore if $n\geq 1$, we always have
$R(t)=O(t^{n-1}e^{\alpha t})$ as $t\to\infty$. If $n=0$, then
$R(t)=O(e^{(\alpha-\varepsilon)t})$ as $t\to\infty$.

We deal here with the case $n\geq1$.
From \eqref{eq.fund22} we
    know that $r$ is differentiable and hence from \eqref{eq:rRsfde} so too is $R$. Defining
    \[
        S(t):= \sum_{j=1}^{N}\e^{\alpha t}\{P_j(t)\cos(\beta_j t) + Q_j(t)\sin(\beta_j t)\}
    \]
    and using \eqref{eq.fund22} and \eqref{eq:rRsfde} we have
    \begin{align*}
        R'(t) &=  r'(t) - S'(t)
        =  \int_{[-\tau,0]}  \nu(ds)\, r(t+s) - S'(t), \quad \text{ for all } t\geq \tau.
    \end{align*}
It is clear from \eqref{eq.solsumrep} that $r(t)=O(t^{n}\e^{\alpha
t})$ and from the definition of $S$ that $S'(t)=O(t^{n}\e^{\alpha
t})$. Thus, there exists $t_0\geq0$ and positive constant matrices
$M_1,M_2$ such that for $t\geq t_0+\tau$,
    \begin{align*}
        |R'(t)| &\leq  \int_{[-\tau,0]}  |\nu|(ds)\, |r(t+s)| + t^{n}\e^{\alpha t}M_2 \\
        & \leq \int_{[-\tau,0]}  |\nu|(ds)\, (s+t)^{n}\e^{\alpha (t+s)} M_1 + t^{n}\e^{\alpha t}M_2 \\
        & \leq t^{n}\e^{\alpha t}\int_{[-\tau,0]}  \e^{\alpha s}\,|\nu|(ds) M_1 + t^{n}\e^{\alpha t}M_2.
    \end{align*}
Thus, $R'(t) = O(t^{n}\e^{\alpha t})$.

What remains to be covered is the case when $n=0$. To do this, we
start by making the observation
that the differential resolvent of \eqref{eq.fund22} may be regarded
as the solution of a Volterra equation.
To see this, define $\nu_{+}(E)=\nu(-E)$ where $-E=\{x:-x\in E\}$
for all
 sets $E$ which are subsets of the Borel sets formed from the interval $[0,\tau]$
 and $\nu_+(E)=0$ for all
sets $E$ which are subsets of the Borel sets formed from the
interval $(\tau,\infty)$. Then
\[
    r'(t) = \int_{[0,\tau]}\nu_+(ds) r(t-s) \text{ for } t\geq0, \quad r(0)=I_d.
\]
For $t>\tau$,
\begin{align*}
    r'(t) &= \int_{[0,t]}\nu_+(ds)r(t-s) - \int_{(\tau,t]}\nu_+(ds)r(t-s) \\
    &= \int_{[0,t]}\nu_+(ds)r(t-s)
\end{align*}
as $\nu_+=0$ in the second term on the right--hand side. On the
other hand, for $0\leq t\leq\tau$, it is true that $\max\{-\tau,-t\}
=-t$ and hence
\[
    r'(t) = \int_{[0,t]}\nu_+(ds) r(t-s) \quad \text{ for } t\geq0, \quad r(0)=I_d.
\]
The case $n=0$ follows from this observation, using a
similar proof to that which established Lemma~\ref{lm:Rvol}.

\section{Proof of Remark~\ref{rk:fsignec}}   \label{sect:remarkpf}
In this case $r(t)=e^{\alpha t}$ and $X$ obeys, for $t\geq0$,
\begin{equation}\label{eq:scXf}
 e^{-\alpha t}X(t)=X_0 + \int_{0}^{t}\e^{-\alpha s}f(s)\,ds + \int_0^t e^{-\alpha s}\Sigma(s)\,dB(s).
\end{equation}
Define the Gaussian martingale $M$ by $ M(t)=\int_0^t e^{-\alpha
s}\Sigma(s)\,dB(s)$ and the deterministic function $d$ by
$d(t)=X_0+\int_{0}^{t}\e^{-\alpha s}f(s)\,ds$. Then from
\eqref{eq:Mconv}, we have on this event of positive probability that
\[
\lim_{t\to\infty} \{M(t)+d(t)\} = L\in(-\infty,\infty).
\]

Suppose that $\lim_{t\to\infty}\langle M \rangle(t)=+\infty$.
Consequently $\limsup_{t\to\infty}M(t)=+\infty$ and
$\liminf_{t\to\infty}M(t)=-\infty$. Also,
$\limsup_{t\to\infty}d(t)=+\infty$, otherwise, if $d(t)\leq D$ for
all $t\geq0$, we have
\[
    L= \liminf_{t\to\infty}\{ d(t) + M(t)\}\leq D + \liminf_{t\to\infty}M(t) = -\infty,
\]
which is a contradiction. (Similarly one can show that
$\liminf_{t\to\infty}d(t)=-\infty$).

Then there exists a deterministic sequence
$\{t_n\}_{n\in\mathbb{Z}^+}$, with $t_0=0$ and $t_n\to\infty$ as
$n\to\infty$, such that $d(t_{n+1})>d(t_n)$ and $d(t_n)\to\infty$ as
$n\to\infty$. Then $M(t_n)\to-\infty$ as $n\to\infty$.

Now,
\[
    \tilde{M}(n) := M(t_n) = \sum_{j=1}^{n}\int_{t_{j-1}}^{t_j}\e^{-\alpha s}\Sigma(s)\,dB(s) = \sum_{j=1}^{n}G_j,
\]
where each $G_j=\int_{t_{j-1}}^{t_j}\e^{-\alpha s}\Sigma(s)\,dB(s)$
is a Gaussian distributed random variable with mean zero and
variance $\int_{t_{j-1}}^{t_j}\e^{-2\alpha t}\Sigma(t)^2\,ds$, each
$G_j$ is measurable with respect to the filtration
$\mathcal{G}_n=\mathcal{F}^B(t_n)$, $n\geq1$,
and $\{G_j\}_{j\in\mathbb{Z}^+}$ are independent and $\langle
\tilde{M} \rangle (n) = \langle M \rangle
(t_n)=\int_{0}^{t_n}\e^{-2\alpha t}\Sigma(t)^2\,ds\to\infty$ as
$n\to\infty$.

Therefore by arguments akin to that used in Shiryeav
\cite[Section~4.1]{Shir}
\begin{align}\label{eq:Shiras}
    \mathbb{P}\left[\limsup_{n\to\infty}\frac{\tilde{M}(n)}{\sqrt{\langle\tilde{M}\rangle(n)}}=+\infty\right] =1, \quad
    \mathbb{P}\left[\liminf_{n\to\infty}\frac{\tilde{M}(n)}{\sqrt{\langle\tilde{M}\rangle(n)}}=-\infty\right] =1,
\end{align}
which implies that
$\mathbb{P}\left[\limsup_{n\to\infty}\tilde{M}(n)=+\infty\right] =1$
and so that
\[
    \mathbb{P}\left[\limsup_{n\to\infty}M(t_n)=+\infty\right] =1.
\]
But our assumption gave that $\lim_{n\to\infty}M(t_n)=-\infty$, with
positive probability. Thus a contradiction. Hence $\langle M \rangle
(t)\to L'\in(-\infty,\infty)$ as $t\to\infty$, i.e.
\[
    \int_{0}^{\infty}\e^{-2\alpha t}\Sigma(t)^2\,dt<+\infty.
\]
Therefore $M(t)\to M(\infty)\in(-\infty,\infty)$ as $t\to\infty$
a.s. and so $\lim_{t\to\infty}d(t)=\lim_{t\to\infty}\{
d(t)+M(t)-M(t) \} =L-M(\infty)\in(-\infty,\infty)$. Hence
\[
    \lim_{t\to\infty}\int_{0}^{t}\e^{-\alpha s}f(s)\,ds =\lim_{t\to\infty}\{ d(t) -X_0 \}\in(-\infty,\infty).
\]

All that remains to be shown is the validity of \eqref{eq:Shiras},
i.e. we need to show that
\[
    A'= \left\{ \limsup_{n\to\infty}\frac{\tilde{M}(n)}{\sqrt{\langle \tilde{M} \rangle(n)}}=+\infty\right\},
    \quad A''= \left\{\liminf_{n\to\infty}\frac{\tilde{M}(n)}{\sqrt{\langle \tilde{M} \rangle(n)}}=-\infty \right\}
\]
are almost sure events. Let
\[
    A_c'= \left\{ \limsup_{n\to\infty}\frac{\tilde{M}(n)}{\sqrt{\langle \tilde{M} \rangle(n)}} >c \right\},
    \quad A_c''= \left\{ \liminf_{n\to\infty}\frac{\tilde{M}(n)}{\sqrt{\langle \tilde{M} \rangle(n)}} <-c \right\}.
\]
Then $A_c'\to A'$ and $A_c''\to A''$ as $c\to\infty$ and
$A',A'',A_c',A_c''$ are tail events. We show that
$\mathbb{P}[A_c']=\mathbb{P}[A_c'']=1$ for all $c>0$.

Using Section~4.1.5 Probem~5, pp.383 of \cite{Shir} gives
\[
    \mathbb{P}[A_c'] = \mathbb{P}\left[\limsup_{n\to\infty}\frac{\tilde{M}(n)}{\sqrt{\langle \tilde{M} \rangle(n)}}>c\right]
    \geq \limsup_{n\to\infty}\mathbb{P}\left[\frac{\tilde{M}(n)}{\sqrt{\langle \tilde{M} \rangle(n)}}>c\right] = 1-\Phi(c)>0
\]
and
\[
    \mathbb{P}[A_c''] = \mathbb{P}\left[\liminf_{n\to\infty}\frac{\tilde{M}(n)}{\sqrt{\langle \tilde{M} \rangle(n)}}<-c\right]
    = \mathbb{P}\left[\limsup_{n\to\infty}\frac{-\tilde{M}(n)}{\sqrt{\langle \tilde{M} \rangle(n)}}>c\right]
     \geq 1-\Phi(c)>0.
\]
So, $\mathbb{P}[A_c']>0$ and $\mathbb{P}[A_c'']>0$, then since the
$G_j$'s are independent an application of Kolomogrov's Zero-One Law,
c.f. e.g. \cite[Theorem~4.1.1]{Shir}, implies
$\mathbb{P}[A_c']=\mathbb{P}[A_c'']=1$. Therefore
$\mathbb{P}[A']=\lim_{c\to\infty}\mathbb{P}[A_c']=1$ and
$\mathbb{P}[A'']=\lim_{c\to\infty}\mathbb{P}[A_c'']=1$.

\end{document}